\patchcmd{\thebibliography}{\section{Bibliography}}{}{}{}
\newenvironment{acknowledgments}
  {% Abstract > Acknowledgements
   \begin{abstract}}
  {\end{abstract}
   \clearpage}
\DeclareMathAlphabet{\mathpzc}{OT1}{pzc}{m}{it}
\DeclareMathOperator{\vol}{vol}
\DeclareMathOperator{\fr}{Frac}
\DeclareMathOperator{\IMM}{Im}
\DeclareMathOperator{\sg}{Sgn}
\DeclareMathOperator{\area}{area}
\DeclareMathOperator{\maxx}{Max}
\DeclareMathOperator{\blu}{Blue\ part}
\DeclareMathOperator{\tang}{Tang}
\newtheorem*{theorem*}{Theorem}
\newtheorem{theorem}{Theorem}[section]
\newtheorem{lemma}{Lemma}[section]
\newtheorem{proposition}{Proposition}[section]
\newtheorem*{proposition*}{Proposition}
\newtheorem{observation}{Observation}[section]
\newtheorem{fact}{Fact}[section]
\newtheorem{definition}{Definition}[section]  \newtheorem{notation}{Notation}[section]
\newtheorem{example}{Example}[section]
\newtheorem{remark}{Remark}[section]
\numberwithin{equation}{section}
\theoremstyle{remark}
\newcommand{\Addresses}{{% additional braces for segregating \footnotesize
  \bigskip
  \footnotesize

  M. Mehrabdollahei, \textsc{Sorbonne Université, IMJ-PRG,  Paris cédex 05, France}\par\nopagebreak
  \textit{E-mail address},  M. Mehrabdollahei: \texttt{mahya.mehrabdollahei@imj-prg.fr}
\maketitle
}}
 \pgfplotsset{compat=1.17} 
\definecolor{royalfuchsia}{rgb}{0.79, 0.17, 0.57}
\definecolor{lemonchiffon}{rgb}{1.0, 0.98, 0.8}
\definecolor{violet(ryb)}{rgb}{0.53, 0.0, 0.69}
	\definecolor{teagreen}{rgb}{0.82, 0.94, 0.75}
	\definecolor{toolbox}{rgb}{0.45, 0.42, 0.75}
	\definecolor{yellow(ncs)}{rgb}{1.0, 0.83, 0.0}
	\definecolor{wisteria}{rgb}{0.79, 0.63, 0.86}
	\definecolor{lightmauve}{rgb}{0.86, 0.82, 1.0}
	\definecolor{mustard}{rgb}{1.0, 0.86, 0.35}
		\definecolor{richbrilliantlavender}{rgb}{0.95, 0.65, 1.0}
		\definecolor{upforestgreen}{rgb}{0.0, 0.27, 0.13}
		\definecolor{steelblue}{rgb}{0.27, 0.51, 0.71}
		\definecolor{mulberry}{rgb}{0.77, 0.29, 0.55}
		\definecolor{satinsheengold}{rgb}{0.8, 0.63, 0.21}	\definecolor{byzantium}{rgb}{0.44, 0.16, 0.39}
		\definecolor{sapphire}{rgb}{0.03, 0.15, 0.4}
  \newcommand\blfootnote[1]{%
  \begingroup
  \renewcommand\thefootnote{}\footnote{#1}%
  \addtocounter{footnote}{-1}%
  \endgroup
}
\tikzset{cross/.style={cross out, draw=black, minimum size=2*(#1-\pgflinewidth), inner sep=0pt, outer sep=0pt},
%default radius will be 1pt. 
cross/.default={1pt}}
\title{Mahler measure of $P_d$ polynomials}
\author{
 Mahya Mehrabdollahei}
\begin{document}
%\footnote{mahya.mehrabdollahei@imj-prg.fr} 
%\texttt{mahya.mehrabdollahei@imj-prg.fr} 
\maketitle
  
\begin{abstract}
This article investigates the Mahler measure of a family of 2-variate polynomials, denoted by $P_d$, for $d\geq 1$, unbounded in both degree and genus. By using a closed formula for the Mahler measure \cite{1}, we are able to compute $m(P_d)$, for arbitrary $d$, as a sum of the values of dilogarithm at special roots of unity. We prove that $m(P_d)$ converges,  and the limit is proportional to $\zeta(3)$, where $\zeta$ is the Riemann zeta function.
\end{abstract}
\section{Introduction}
Mahler measure is an interesting notion,  used in number theory, analysis, special functions, random walks, etc. The book \cite{0} may be seen as a reference for prerequisite materials and recent achievements in Mahler measure theory. The (logarithmic) Mahler measure  of a multi-variate polynomial, $P(x_1,\ldots,x_n) \in \mathbb{C}[x_1,\ldots,x_n]$, denoted by $m(P)$, is defined by the following formula :
$$m(P) =  \frac{1}{(2\pi)^n} \left(\int_0^{2\pi} \int_0^{2\pi} \cdots \int_0^{2\pi} \log  \bigl |P(e^{i\theta_1}, e^{i\theta_2}, \ldots, e^{i\theta_n}) \bigr|  \ \  d\theta_1\, d\theta_2\cdots d\theta_n \right).$$
It is possible to prove that this integral is not singular, and $m(P)$ always exists \cite{11}, but there is no general closed formula to compute it. Moreover, it is not easy to approximate it with arbitrary precision.
Guilloux and Marché \cite{1} found a closed formula for a specific class of 2-variate polynomials, called regular exact polynomials, which expresses their Mahler measure as a finite sum. Boyd  and Rodriguez-Villegas \cite{boyd_rodriguez-villegas_2002} found another closed formula for the Mahler measure of a specific family of exact 2 variable polynomials with a different language.  Boyd \cite{7}, Bertin, and Zudilin \cite{3,4} investigated families of curves of genus 2. Furthermore, Bertin \cite{5} computed the Mahler measure of a family of 3-variate polynomials $Q_k(x, y, z)$, defining $K3$ surfaces. Lalín \cite{8} developed a new method for expressing Mahler measures of some families of polynomials in terms of polylogarithms. Also \cite{boyd_rodriguez-villegas_2002} and \cite{Mahler2} give many information about the relation between Mahler measures of exact polynomials and the values of Dilogarithm function at certain algebraic numbers.  \\
In this article we study a specific
 family of 2-variate exact polynomials. We compute their Mahler measure. Furthermore, we compute the limit of the Mahler measure of this family. 
 In \cref{sec:exact}, we introduce the family  $P_d(x,y):=\sum_{0 \leq i+j\leq d} x^i y^j$, presented to us by François Brunault. He noted that $P_d$ is exact. To apply the formula in \cite{1}, we need to determine its terms. To do so, we compute the toric points, a volume function, and a kind of sign function. \cref{Toric points} is devoted to the computation of $m(P_d)$. In \cref{6}, to achieve the objective, finding a new explicit formula to compute $m(P_d)$ in terms of the values of the Dilogarithm at roots of unity,  we introduce  $\vol(\theta, \alpha):=D(e^{i\theta})-D(e^{i(\theta+\alpha)})+D(e^{i\alpha})$. Using this function we prove the following theorem in \cref{7};
\begin{theorem*} The Mahler measure $m(P_d)$ is expressed in terms of $\vol$ as follows: 
 $$ 2\pi m(P_d)= \frac{-2}{d+2} \sum_{0<k<k'\leq d} \vol\left(\frac{2k\pi}{d+1},\frac{2(k'-k)\pi}{d+1}\right)+\frac{2}{d+1} \sum_{0<k<k'\leq d+1} \vol\left(\frac{2k\pi}{d+2},\frac{2(k'-k)\pi}{d+2}\right).$$

 \end{theorem*}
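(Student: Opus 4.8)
\emph{Proof idea.}
The proof starts from the closed formula of Guilloux--Marché \cite{1} for the Mahler measure of a regular exact polynomial, specialised to $P_d$ by the data collected in \cref{Toric points}: the toric points of the curve $\{P_d=0\}$, the values of their volume function at these points, and the sign function $\sg$. The key algebraic fact is the identity
$$P_d(x,y)=\frac{f(x)-f(y)}{x-y},\qquad f(t)\;=\;t+t^2+\cdots+t^{d+1}\;=\;\frac{t^{d+2}-t}{t-1},$$
which shows that a point $(x,y)$ with $|x|=|y|=1$ and $x\ne y$ lies on the curve precisely when $f(x)=f(y)$. Two families of such points are immediate: pairs of distinct $(d+1)$-st roots of unity other than $1$, where $f\equiv 0$, and pairs of distinct $(d+2)$-nd roots of unity other than $1$, where $f\equiv -1$. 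A part of the set-up (carried out in \cref{Toric points}) is to check that these exhaust the toric points that enter the formula; they are the origin of the two sums, with denominators $d+1$ and $d+2$, in the statement.

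From there the plan is as follows. First I would fix the parametrisations: writing a point of the first family as $\bigl(e^{2k\pi i/(d+1)},e^{2k'\pi i/(d+1)}\bigr)$ and using $P_d(x,y)=P_d(y,x)$, the unordered pairs are indexed by $0<k<k'\le d$, and similarly the second family by $0<k<k'\le d+1$; along the way I would record the value of $\sg$ on the local branch through each point, which will turn out to depend only on the order of $k$ and $k'$. Second, evaluate the local term: at a fixed toric point the summand dictated by \cite{1} is a short combination of values of $D$ at the two coordinates and their ratio (up to sign and normalisation, the hyperbolic volume from which $\vol$ takes its name); using that $\log|e^{i\theta}|=0$, so that $D(e^{i\theta})$ equals the Clausen value $\IMM\bigl(\operatorname{Li}_2(e^{i\theta})\bigr)$ and in particular everything in sight is real, together with the elementary functional equations of $D$ (among them $D(\bar z)=-D(z)$ and $D(z^{-1})=-D(z)$), I would bring this local term into the form $-\tfrac{2}{d+2}\,\vol\left(\tfrac{2k\pi}{d+1},\tfrac{2(k'-k)\pi}{d+1}\right)$ for the first family and $+\tfrac{2}{d+1}\,\vol\left(\tfrac{2k\pi}{d+2},\tfrac{2(k'-k)\pi}{d+2}\right)$ for the second. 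Summing over both families then yields the claimed identity.

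The difficulty is organisational rather than a single hard estimate; three points need care. (i) One must prove that the two root-of-unity families really are all the toric points that contribute, which relies on the explicit shape of $f$. (ii) One must track $\sg$ consistently across the irreducible components of $\{P_d=0\}$ and align it with the orientation implicit in the ordering $k<k'$ --- this is precisely what produces the alternating three-term pattern $D(e^{i\theta})-D(e^{i(\theta+\alpha)})+D(e^{i\alpha})$ out of the raw dilogarithm contribution. (iii) One must explain why the order-$(d+1)$ family carries the coefficient $-\tfrac{2}{d+2}$ while the order-$(d+2)$ family carries $+\tfrac{2}{d+1}$; these ``crossed'' denominators reflect the local multiplicities with which the two families meet the torus, which one reads off from $f$ and $f'$. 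Once these are settled, collapsing each local contribution to a single value of $\vol$ is a routine manipulation of the functional equations of $D$.
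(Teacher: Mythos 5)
Your route is essentially the paper's: apply the closed formula of \cite{1}, $2\pi m(P_d)=\sum\epsilon(x,y)V(x,y)$, split the toric points into the two families $U_{d+1}$ and $U_{d+2}$ of roots of unity, observe that $\epsilon$ depends only on whether $k<k'$ or $k>k'$ (with opposite conventions on the two families), and collapse each local volume value to a single $\vol$ using $D(\bar z)=-D(z)$. Your identity $P_d(x,y)=\bigl(f(x)-f(y)\bigr)/(x-y)$ with $f(t)=(t^{d+2}-t)/(t-1)$ is equivalent to the factorisation $P_d\cdot(x-1)(y-1)(x-y)=(x^{d+2}-1)(y-1)-(y^{d+2}-1)(x-1)$ used in \cref{Toric points}, and it does yield both families and the signs (at a toric point $\gamma(x,y)=-xf'(x)/\bigl(yf'(y)\bigr)$), with exhaustiveness and $x\neq y$ deferred, as you say, to the results of that section. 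The factor $2$ in each sum is also correctly sourced: $\epsilon$ and $V$ both change sign under $(x,y)\mapsto(y,x)$, so the two orderings of a pair contribute equally.

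The one step that would not go through as described is your point (iii). The coefficients $\tfrac{1}{d+2}$ on the $U_{d+1}$ family and $\tfrac{1}{d+1}$ on the $U_{d+2}$ family are not ``local multiplicities'' readable from $f$ and $f'$: the formula of \cite{1} carries no multiplicity weights, and $f'$ enters only the sign computation. These crossed denominators come from the explicit primitive of $\eta$ constructed in \cref{th2.2}, namely $V=\tfrac{1}{(d+1)(d+2)}\bigl[D(y^{d+1})-D(x^{d+1})-D((y/x)^{d+1})\bigr]+\tfrac{1}{d+2}\bigl[D(x)-D(y)-D(x/y)\bigr]$, whose first block vanishes identically on $U_{d+1}$ (all its arguments equal $1$) and folds into the second block on $U_{d+2}$ (there $x^{d+1}=\bar x$, so the first block equals the second and $\tfrac{1}{(d+1)(d+2)}+\tfrac{1}{d+2}=\tfrac{1}{d+1}$). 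Without this explicit volume function --- i.e.\ without the wedge-product computation of $x\wedge y$ in Section 2, or an equivalent determination of the primitive --- your local term has an undetermined normalisation, and no amount of manipulation of the functional equations of $D$ will produce the denominators $d+2$ and $d+1$ in the right places.
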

 The above theorem asserts that the Mahler measure of $P_d$ for arbitrary $d\geq 1$ can be expressed in terms of the finite sum over the Dilogarithm function at certain roots of unity. This theorem is the key point for connecting the values of $m(P_d)$ to special values of $L$-functions, which is a reminiscence of the work of Smyth \cite{smyth_1981} and Boyd \cite{7,9}. \\
  Moreover, in the above formula, when $d$ goes to infinity, each summation is proportional to a Riemann sum of $\vol$. Hence, $\lim_{d\rightarrow \infty} m(P_d)$ is written as a subtraction of two expressions; each of them is proportional to a Riemann sum of $\vol$ and both go to infinity. In order to find the limit, we first use the Rieman sum technics and later by analysing the errors, we prove the following theorem:
\begin{theorem*} 
The $\lim_{d \rightarrow \infty} m(P_d)$ exists and we have:
\normalfont
\begin{align}\label{limizeta}
    \lim_{d\rightarrow \infty}m(P_d)=\frac{9}{2\pi^2} \zeta (3). 
\end{align}
\end{theorem*}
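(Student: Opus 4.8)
The plan is to collapse the two double sums in the theorem to a single sum over roots of unity, determine its asymptotics, and then check that the divergent leading parts cancel. Write $\zeta_N:=e^{2\pi i/N}$ and
$$T_N:=\sum_{0<k<k'\le N-1}\vol\!\left(\frac{2k\pi}{N},\frac{2(k'-k)\pi}{N}\right),$$
so that the displayed formula reads $2\pi m(P_d)=\frac{-2}{d+2}\,T_{d+1}+\frac{2}{d+1}\,T_{d+2}$. Since $\vol\!\left(\frac{2k\pi}{N},\frac{2(k'-k)\pi}{N}\right)=D(\zeta_N^{k})-D(\zeta_N^{k'})+D(\zeta_N^{k'-k})$, I would expand $T_N$ and count, for each $j$, how often $D(\zeta_N^{j})$ occurs: the three summands contribute with multiplicities $(N-1-j)$, $-(j-1)$ and $(N-1-j)$ respectively, so the coefficient of $D(\zeta_N^{j})$ is $2(N-1-j)-(j-1)=2N-1-3j$. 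Because the Bloch--Wigner dilogarithm satisfies $D(\bar z)=-D(z)$, the pairing $j\leftrightarrow N-j$ (the fixed point $j=N/2$, when $N$ is even, contributing $D(-1)=0$) gives $\sum_{j=1}^{N-1}D(\zeta_N^{j})=0$, so the $2N-1$ term disappears and
$$T_N=-3A_N,\qquad A_N:=\sum_{j=1}^{N-1}j\,D(\zeta_N^{j}),\qquad\text{hence}\qquad 2\pi m(P_d)=\frac{6A_{d+1}}{d+2}-\frac{6A_{d+2}}{d+1}.$$

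Next I would show that $A_N=-\tfrac{\zeta(3)}{2\pi}N^{2}+O(\log N)$. On the unit circle $D(e^{i\theta})=\mathrm{Cl}_2(\theta)=\sum_{n\ge 1}\frac{\sin n\theta}{n^{2}}$, so with $\phi(x):=x\,\mathrm{Cl}_2(2\pi x)$ one has $A_N=N^{2}\cdot\frac1N\sum_{j=1}^{N-1}\phi(j/N)$. The function $\phi$ vanishes at $0$ and at $1$, hence extends to a continuous $1$-periodic function, and a direct computation of its Fourier coefficients gives $\widehat{\phi}(m)=O(\log|m|/m^{2})$ (this reflects that $\phi$ is continuous while $\phi'$ has only integrable logarithmic singularities at the integers). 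Applying the aliasing identity $\frac1N\sum_{j=0}^{N-1}\phi(j/N)=\sum_{k\in\mathbb Z}\widehat{\phi}(kN)$ — legitimate since $\sum_m|\widehat\phi(m)|<\infty$ — together with $\widehat{\phi}(0)=\int_0^1 x\,\mathrm{Cl}_2(2\pi x)\,dx=-\frac{\zeta(3)}{2\pi}$ (integrate by parts: $\int_0^1 x\sin(2\pi nx)\,dx=-\frac1{2\pi n}$, then sum $\sum 1/n^{3}=\zeta(3)$) and with the tail bounded by $\sum_{k\ne 0}|\widehat\phi(kN)|=O(\log N/N^{2})$, I obtain $\frac1N\sum_{j=1}^{N-1}\phi(j/N)=-\frac{\zeta(3)}{2\pi}+O(\log N/N^{2})$, whence the claimed expansion of $A_N$.

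Substituting into $2\pi m(P_d)=\frac{6A_{d+1}}{d+2}-\frac{6A_{d+2}}{d+1}$, the remainders contribute $O(\log d/d)\to 0$, while the main terms give
$$-\frac{3\zeta(3)}{\pi}\left(\frac{(d+1)^{2}}{d+2}-\frac{(d+2)^{2}}{d+1}\right)=-\frac{3\zeta(3)}{\pi}\cdot\frac{(d+1)^{3}-(d+2)^{3}}{(d+1)(d+2)}=-\frac{3\zeta(3)}{\pi}\cdot\frac{-(3d^{2}+9d+7)}{(d+1)(d+2)}\;\longrightarrow\;\frac{9\zeta(3)}{\pi},$$
so dividing by $2\pi$ yields $\lim_{d\to\infty}m(P_d)=\frac{9}{2\pi^{2}}\zeta(3)$.

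I expect the main obstacle to be precisely the error estimate in the middle step: it is not enough to know that $A_N/N^{2}\to-\zeta(3)/(2\pi)$, because in the final combination $A_{d+1}$ and $A_{d+2}$ are divided only by $d+2$ and $d+1$, so one needs the error in this Riemann-sum-type approximation to be $o(N)$, whereas a crude bound only yields $O(N)$ or $O(N\log N)$. Pushing the error down to $O(\log N)$ forces one to use genuinely the decay of the Fourier coefficients of $\phi$ — equivalently, the precise (integrable, logarithmic) nature of the singularities of $\phi'$ at the integers — rather than treating $\phi$ as a generic continuous function; the other steps are essentially bookkeeping. One can alternatively keep the two-dimensional Riemann sums of $\vol$ over the triangle $\{u,v>0,\ u+v<1\}$, where the same limit emerges from $\iint\vol(2\pi u,2\pi v)\,du\,dv=-3\int_0^1 x\,\mathrm{Cl}_2(2\pi x)\,dx=\frac{3\zeta(3)}{2\pi}$; but then the delicate error control must be carried out for a $2$-dimensional sum with boundary singularities, which is why reducing to the one-dimensional quantity $A_N$ first is worthwhile.
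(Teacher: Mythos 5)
Your argument is correct, and it takes a genuinely different route from the paper. The paper keeps the two-dimensional Riemann sums of $\vol$ over the triangle $T$ and controls the error $E(d)$ between $\frac{4\pi^2}{(d+1)^2}\sum\vol$ and $\iint_T\vol\,dA$ by exploiting concavity of $\vol$ on $T$ (established via the Hessian): tangent planes over a square subpartition give an upper bound, a piecewise-affine interpolant over a triangular partition gives a lower bound, and the two squeeze the error into the integral of $\vol$ over a boundary strip of area $O(1/d)$, where $\vol\to 0$ because it vanishes on $\partial T$; this yields exactly the needed $E(d)=o(1/d)$. You instead collapse the double sum to the one-dimensional quantity $A_N=\sum_{j=1}^{N-1}jD(\zeta_N^j)$ by counting multiplicities (your count $2N-1-3j$ is right, and $\sum_j D(\zeta_N^j)=0$ kills the divergent constant), and then obtain the much sharper expansion $A_N=-\frac{\zeta(3)}{2\pi}N^2+O(\log N)$ by Poisson summation/aliasing applied to the continuous periodic function $\phi(x)=x\,\mathrm{Cl}_2(2\pi x)$. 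Your reduction buys a cleaner and quantitatively stronger error term and avoids both the concavity computation and the two-dimensional partition bookkeeping; what it costs is the one assertion you partly defer, namely $\widehat{\phi}(m)=O(\log|m|/m^2)$, which a referee would want spelled out --- it does hold (expand $\mathrm{Cl}_2$ in its Fourier series and integrate term by term: the diagonal $n=|m|$ term contributes $-\tfrac{i}{4m^2}+O(m^{-3})$ and the off-diagonal terms give shifted harmonic sums of size $O(\log|m|/m^2)$), and together with continuity of $\phi$ it legitimizes the aliasing identity. The paper's approach, by contrast, is more robust (it would survive if $\vol$ were only known to be concave and boundary-vanishing, without an explicit Fourier expansion) but delivers only the qualitative $o(1/d)$ that the final telescoping $\frac{(d+1)^2}{d+2}-\frac{(d+2)^2}{d+1}$ requires. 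Both correctly identify that this error estimate, not the evaluation of the main term, is the crux.
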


A theorem of Boyd and Lawton expresses limits of Mahler measure of univariate polynomials as the Mahler measure of a multivariate polynomial;
\begin{theorem*}\cite{9,10}
For $P \in  \mathbb{C}[x_1, . . . , x_n]$, we have:
$$\lim _{k_2\rightarrow \infty}\cdots \lim _{k_n\rightarrow \infty}m(P(x, x^{k_2}
, . . . , x^{k_n} )) = m(P(x_1, . . . , x_n)).$$
\end{theorem*}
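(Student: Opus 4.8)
The plan is to reinterpret each univariate Mahler measure on the left as an integral of $\log|P|$ against a one-dimensional measure on the torus $\mathbb{T}^n=(\mathbb{R}/2\pi\mathbb{Z})^n$, and to prove that, as the exponents tend to infinity one at a time, that measure converges weakly to normalized Haar measure $\lambda$. Writing $x=e^{i\theta}$,
$$m\bigl(P(x,x^{k_2},\dots,x^{k_n})\bigr)=\frac{1}{2\pi}\int_0^{2\pi}\log\bigl|P(e^{i\theta},e^{ik_2\theta},\dots,e^{ik_n\theta})\bigr|\,d\theta=\int_{\mathbb{T}^n}\log|P|\,d\mu_{\mathbf k},$$
where $\mu_{\mathbf k}$ is the push-forward of $d\theta/2\pi$ under the winding map $\theta\mapsto(\theta,k_2\theta,\dots,k_n\theta)$, while the right-hand side of the theorem is $\int_{\mathbb{T}^n}\log|P|\,d\lambda$. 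Thus the assertion is exactly that $\int\log|P|\,d\mu_{\mathbf k}\to\int\log|P|\,d\lambda$ along the iterated limit. Since $\log|P|\in L^1(\lambda)$ (the classical integrability underlying the existence of $m(P)$, cf. \cite{11}), both sides are finite once we know $Q_{\mathbf k}(z):=P(z,z^{k_2},\dots,z^{k_n})$ is not identically zero.

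The weak convergence $\mu_{\mathbf k}\to\lambda$ is elementary and is where the Weyl/equidistribution input enters. Testing against a character $\chi_{\mathbf a}(\underline\theta)=e^{i\langle\mathbf a,\underline\theta\rangle}$ gives
$$\int_{\mathbb{T}^n}\chi_{\mathbf a}\,d\mu_{\mathbf k}=\frac{1}{2\pi}\int_0^{2\pi}e^{i(a_1+a_2k_2+\cdots+a_nk_n)\theta}\,d\theta,$$
which equals $1$ when $a_1+a_2k_2+\cdots+a_nk_n=0$ and $0$ otherwise. Taking $k_n\to\infty,\dots,k_2\to\infty$ in succession, the linear form is nonzero for all large exponents unless $\mathbf a=0$; hence $\int\chi_{\mathbf a}\,d\mu_{\mathbf k}\to\int\chi_{\mathbf a}\,d\lambda$, and density of trigonometric polynomials in $C(\mathbb{T}^n)$ upgrades this to $\int f\,d\mu_{\mathbf k}\to\int f\,d\lambda$ for all continuous $f$. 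If $\log|P|$ were continuous we would be done; the obstruction is its logarithmic poles along the divisor $\{P=0\}\subset\mathbb{T}^n$. I would isolate the difficulty using the truncations $g_T:=\max(\log|P|,-T)$, which are continuous and bounded above. Because $\log|P|$ is upper semicontinuous and bounded above by $\log\|P\|_1$, the portmanteau theorem yields one inequality for free: from $\log|P|\le g_T$ we get $\limsup_{\mathbf k}\int\log|P|\,d\mu_{\mathbf k}\le\int g_T\,d\lambda$, and letting $T\to\infty$ with monotone convergence gives $\limsup_{\mathbf k}\int\log|P|\,d\mu_{\mathbf k}\le\int\log|P|\,d\lambda$.

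The reverse inequality $\liminf_{\mathbf k}\int\log|P|\,d\mu_{\mathbf k}\ge\int\log|P|\,d\lambda$ is the real content, and I expect it to be the main obstacle. It is equivalent to the \emph{uniform integrability} of the negative parts, namely
$$\lim_{T\to\infty}\ \sup_{\mathbf k}\int_{\{\log|P|<-T\}}\bigl(-T-\log|P|\bigr)\,d\mu_{\mathbf k}=0,$$
for then $\int\log|P|\,d\mu_{\mathbf k}\ge\int g_T\,d\mu_{\mathbf k}-\varepsilon\to\int g_T\,d\lambda-\varepsilon\ge\int\log|P|\,d\lambda-\varepsilon$. Intuitively this says the winding curve cannot concentrate its mass near $\{P=0\}$ too strongly, uniformly in $\mathbf k$. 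The mechanism I would use: the leading coefficient of $Q_{\mathbf k}$ equals the coefficient of the monomial of $P$ maximizing the weighted degree $i_1+k_2 i_2+\cdots+k_n i_n$, and for all large $\mathbf k$ this maximizer is unique (a vertex of the Newton polytope of $P$), so no cancellation occurs and the leading coefficient is one of the fixed nonzero coefficients of $P$. Hence $Q_{\mathbf k}\not\equiv0$, and by Jensen's formula its mean $\int\log|Q_{\mathbf k}|\,d\mu_{\mathbf k}=m(Q_{\mathbf k})=\log|\mathrm{lc}(Q_{\mathbf k})|+\sum_{|\alpha_j|>1}\log|\alpha_j|$ stays in a bounded range (the upper bound being automatic from $|Q_{\mathbf k}|\le\|P\|_1$). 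The delicate point is that $\deg Q_{\mathbf k}$ grows with $\mathbf k$, so a naive Remez small-value estimate degrades; one must instead bound, uniformly in $\mathbf k$, the number and proximity of roots $\alpha_j$ clustering near the unit circle, since only those feed the singular tail. This is precisely Lawton's key estimate \cite{10}, which I would reproduce by controlling the root distribution of $Q_{\mathbf k}$ in thin annuli around $|z|=1$ through a quantitative Jensen count combined with the uniform lower bound on the leading coefficient.

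Assembling the two inequalities finishes the argument: given $\varepsilon>0$, choose $T$ so large that $\int(g_T-\log|P|)\,d\lambda<\varepsilon$ and, by the uniform estimate, $\sup_{\mathbf k}\int(g_T-\log|P|)\,d\mu_{\mathbf k}<\varepsilon$; then, holding $T$ fixed, the character computation makes $\bigl|\int g_T\,d\mu_{\mathbf k}-\int g_T\,d\lambda\bigr|<\varepsilon$ for all large exponents, so the triangle inequality gives $\bigl|\int\log|P|\,d\mu_{\mathbf k}-\int\log|P|\,d\lambda\bigr|<3\varepsilon$. Iterating over $k_2,\dots,k_n$ completes the proof. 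To summarize the division of labour: the harmonic-analytic part (equidistribution of the winding curve and the soft $\limsup$ inequality) is routine, whereas the $\mathbf k$-uniform control of the logarithmic singularity — the only place where the fixed coefficient structure of $P$ is genuinely used — is the crux.
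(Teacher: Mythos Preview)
The paper does not give its own proof of this statement: it is the Boyd--Lawton theorem, quoted from \cite{9,10} in the introduction purely as background, so there is nothing in the paper to compare your argument against.

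On its own terms, your outline is the standard one and is sound in its architecture: push forward Lebesgue measure under the winding map, test weak convergence against characters, truncate $\log|P|$ to a continuous $g_T$, and close the gap via uniform integrability of the negative tail. The equidistribution step and the $\limsup$ inequality are genuinely complete as written.

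The one real gap is exactly where you flag it. The uniform integrability of $(\log|Q_{\mathbf k}|)^-$ is the entire content of the theorem, and you do not prove it: you observe (correctly) that the leading coefficient of $Q_{\mathbf k}$ stabilises to a fixed nonzero coefficient of $P$, note that a Remez-type bound degrades with $\deg Q_{\mathbf k}$, and then invoke ``Lawton's key estimate \cite{10}''. But \cite{10} \emph{is} the paper whose theorem you are trying to prove, so appealing to it here is circular. What is actually needed is a uniform small-value estimate of the shape: for every $\varepsilon>0$ there is $\delta>0$ such that $\bigl|\{\theta\in[0,2\pi]:|Q_{\mathbf k}(e^{i\theta})|<\delta\}\bigr|<\varepsilon$ for all admissible $\mathbf k$. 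Lawton obtains this not via a Jensen root-count in annuli, but by an elementary lemma bounding the measure of the sublevel set $\{|q(e^{i\theta})|<\delta\}$ for a monic (or normalised) polynomial $q$ in terms of $\delta$ and the number of its roots on or near the circle, together with the observation that this root count is controlled by the \emph{fixed} Newton polytope of $P$, not by $\deg Q_{\mathbf k}$. Your phrase ``quantitative Jensen count'' points in the right direction, but until that estimate is written down and shown to be uniform in $\mathbf k$, the proof is a sketch rather than a proof.
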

Our result is similar in spirit. After having determined the value of the limit of $m(P_d)$, we noted that it is indeed the value of a Mahler measure. D’Andrea and Lalín \cite{6} defined a polynomial in $4$ variables;  $P_\infty:=(x-1)(y-1)-(z-1)(w-1)$. They proved that $m(P_\infty )=\frac{9}{2\pi^2}\zeta(3)$. 
The fact that $\lim_{d \rightarrow \infty }m(P_d)=m(P_\infty)$ is in the spirit of Boyd-Lawton theorem, explained below. After noticing this coincidence, one may link $P_d$'s and $P_{\infty}$ as follows:
\begin{align*}
 P_d(x,y)=\frac{P_\infty(x^{d+2},y,x,y^{d+2})}{(1-x)(1-y)(x-y)},
\end{align*}
and since the Mahler measures of the denominator is zero we have:
\begin{align}\label{**}
    m(P_d(x,y))=m(P_\infty(x^{d+2},y,x,y^{d+2})).
\end{align}
 However, let us stress that for applying the Boyd-Lawton theorem to $P_d$ we face two types of difficulties; First, it would be hard to guess a candidate for $P_{\infty}$ without doing the computation of $\lim_{d \rightarrow \infty}m(P_d)$; Second, finding $P_{\infty}$ is not sufficient to claim that $\lim_{d \rightarrow \infty}m(P_d)=m(P_{\infty})$. We can not apply the theorem directly as it is written in the literature, since $P_d$ is a family of two variate polynomials. In fact, we need to have a generalization of Boyd-Lawton theorem. We will discus this generalization in a forthcoming article in collaboration with Guilloux, Brunault, Pengo.\\
 In contrast to $P_d$ family,  Gu and Lalìn \cite{Gu} studied a $3$-variable family of polynomials with two parameters, $x^{a+b} +1+(x^a +1)y+(x^b -1)z$. They applied the actual form of the theorem of Boyd-Lawton on this family, and proved that $\lim \limits_{\substack{a\rightarrow \infty \\ b\rightarrow \infty}}x^{a+b} +1+(x^a +1)y+(x^b-1)z=m(P_{\infty})$.
 
\begin{acknowledgments}
I would like to express my deep gratitude to Antonin Guilloux and Fabrice Rouillier, my research supervisors, for their patient guidance, enthusiastic encouragement and useful critiques of this research work.\\
I am very grateful to Francois Brunault and Riccardo Pengo for their several helpful discussions, in particular about $P_\infty$, which was the starting point of our future common article.\\
I am thankful to Marie-Jose Bertin for the extraordinary experiences she arranged for me and for her interesting comments and questions which provide me the opportunity to start our common research.\\
This project has received funding from the European Union's Horizon 2020 research and innovation program under the Marie Skłodowska-Curie grant agreement No 754362.\blfootnote{
  \includegraphics[height=4.0mm]{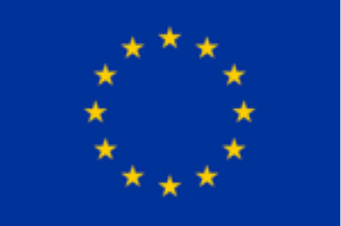}}

\end{acknowledgments}

\section{Exact polynomials }
\label{sec:exact}
In this section, the class of exact polynomials is introduced.
\begin{definition}
The real differential $1$-form $ \eta$ on ${{\mathds{C}}^*}^2 $ is defined by $ \eta=\log|y|\ d\arg(x)-\log |x|\ d\arg(y)$.
\end{definition}
\begin{remark}
 Let $P \in \mathds{C} [x,y]$ and $C$ be the algebraic curve defined by
 $$C= \{(x,y)\in {{\mathds{C}}^*}^2| P(x,y)=0 , dP(x,y) \neq 0 \};$$
 Then the form $\eta$ restricted to $C$ is closed.
 \end{remark} 
After the previous remark one may ask about the exactness of $\eta|_C $. In general, the answer is no, but this question leads to the definition of exact polynomials.
\begin{definition}
A polynomial $P\in \mathds{C} [X,Y]$ is called \textbf{exact} if the form $\eta$ restricted to the algebraic curve $C$ is exact. In this case, any primitive for $\eta$ is called a \textbf{Volume function} associated with the exact polynomial $P$. 
\end{definition}
To see a simple example of exact polynomials, we need the following definition.
\begin{definition}\label{def 2.3}
 The \textbf{Bloch-Wigner} Dilogarithm function $D(z)$ is defined by: 
\begin{equation*}
    D(z) = \IMM(Li_2(z)) + \arg(1-z)\log |z|, 
\end{equation*}
where $\arg$ denotes the branch of the argument, lying between $-\pi$ and $\pi$, and $ Li_2(z)$ is the following function:
\begin{equation*}
    Li_2(z) = - \int_{0}^{z} \log(1-u)\frac{du}{u} \ \ \  \ for \ z \in \mathds{C} \setminus [1,\infty).
\end{equation*}
\end{definition}
The function $D(z)$ is real analytic on $\mathds{C}$ except at the two points $0$ and $1$, where it is continuous but not differentiable.
We briefly summarize some needed properties of this function. For more information see \cite{2}. 

\begin{enumerate}
    \item $D(\bar{z})=-D(z)$.\\ 
    \item If $|z|=1$,
$D(z)=D(e^{i\theta})=\sum_{n=1}^{\infty} \frac{\sin (n\theta)}{n^2}$, 
in particular $D(e^{k\pi i})=0$.
\end{enumerate}
The link between the differential of $D$ and $\eta$ is well known, see \cite{12} or \emph{Theorem} $7.2$ of \cite{0};
\begin{fact} $-D(z)$ is a primitive for $\eta$ restricted to \\ $\{(z,1-z)\in \mathds{C^*}^2 \}$, i.e.,
\begin{equation*}
    -dD(z)= \eta(z,1-z).
    \end{equation*}
    \end{fact}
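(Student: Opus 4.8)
The plan is to prove the identity by directly computing the differential of $D$ from the integral definition of $Li_2$, and then to extend from an open dense set by continuity. First I would restrict to the open set $U=\mathds{C}\setminus([1,\infty)\cup\{0\})$, on which $Li_2$ is holomorphic with $Li_2'(z)=-\log(1-z)/z$ ($\log$ the principal branch); hence, as a holomorphic $1$-form on $U$,
\[
dLi_2(z)=-\log(1-z)\,\frac{dz}{z}.
\]

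Next I would split this into real and imaginary parts. Writing $\log(1-z)=\log|1-z|+i\arg(1-z)$ and $\dfrac{dz}{z}=d\log|z|+i\,d\arg(z)$ and multiplying out, the imaginary part of $dLi_2(z)$ is
\[
d\,\IMM\!\left(Li_2(z)\right)=-\log|1-z|\,d\arg(z)-\arg(1-z)\,d\log|z|.
\]
On the other hand, the product rule gives
\[
d\bigl(\arg(1-z)\log|z|\bigr)=\log|z|\,d\arg(1-z)+\arg(1-z)\,d\log|z|.
\]
Adding the last two displays, the terms $\arg(1-z)\,d\log|z|$ cancel, so that
\[
dD(z)=-\log|1-z|\,d\arg(z)+\log|z|\,d\arg(1-z),
\]
and therefore $-dD(z)=\log|1-z|\,d\arg(z)-\log|z|\,d\arg(1-z)$, which is exactly $\eta$ evaluated along $(x,y)=(z,1-z)$. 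This establishes $-dD(z)=\eta(z,1-z)$ on $U$.

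Finally, the curve $\{(z,1-z)\in{\mathds{C}^*}^2\}$ is parametrized by $z\in\mathds{C}\setminus\{0,1\}$, and on this set both $D(z)$ (real analytic away from $0$ and $1$, as recalled above) and the $1$-form $\eta(z,1-z)$ are real analytic; since $U$ is dense in $\mathds{C}\setminus\{0,1\}$, the identity just proved extends to all of $\mathds{C}\setminus\{0,1\}$ by continuity. The one point deserving care — and essentially the only obstacle — is the bookkeeping of the branch of $\arg$: across the cut $[1,\infty)$ the jump of $\IMM(Li_2)$ is precisely cancelled by the jump of $\arg(1-z)\log|z|$, which is why $D$, and hence the identity, is regular there; this is the classical reason the Bloch-Wigner dilogarithm is single-valued and real analytic on $\mathds{C}\setminus\{0,1\}$.
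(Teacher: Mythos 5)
Your computation is correct. Note, however, that the paper does not prove this Fact at all: it is stated as a known result with references to \cite{12} and Theorem 7.2 of \cite{0}, so there is no internal argument to compare yours against. What you have written is the standard self-contained verification: differentiating $Li_2$ to get $dLi_2(z)=-\log(1-z)\,dz/z$, separating into real and imaginary parts, and observing that the term $\arg(1-z)\,d\log|z|$ coming from $\IMM(Li_2)$ cancels against the product-rule term from $\arg(1-z)\log|z|$, leaving exactly $-dD(z)=\log|1-z|\,d\arg(z)-\log|z|\,d\arg(1-z)=\eta(z,1-z)$. The signs all check out against the paper's convention $\eta=\log|y|\,d\arg(x)-\log|x|\,d\arg(y)$. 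Your handling of the cut $[1,\infty)$ is also the right way to close the argument: the identity is first established on the open set where the principal branches are holomorphic, and since both $dD$ and $\eta(z,1-z)$ are (real analytic, hence) continuous $1$-forms on all of $\mathds{C}\setminus\{0,1\}$ --- which is precisely the parameter set of $\{(z,1-z)\in{\mathds{C}^*}^2\}$ --- agreement on a dense subset extends to the whole curve. The only implicit point worth making explicit is that $d\arg(z)$ and $d\arg(1-z)$ are globally well-defined closed $1$-forms on $\mathds{C}^*$ even though $\arg$ itself is only defined up to branch choice; this is what makes the right-hand side single-valued and is the same mechanism behind the single-valuedness of $D$ that you invoke at the end.
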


\begin{example}
\label{ex 2.1}The polynomial $P_1(x,y)=x+y+1$, is exact and a volume function is $-D(-x)$; (See \cref{th 2.2}).
\end{example}

\subsection{\texorpdfstring{$P_d$}{} polynomials}

We generalize the first example $P_1$ to a family of polynomials, called $P_d(x,y)$, with $d\geq 1$;
\begin{notation}
$$P_d(x,y):=\sum_{0 \leq i+j\leq d} x^i y^j.$$
\end{notation}

Let us prove the exactness of $P_d$, for all $d$.
\subsubsection{Exactness of \texorpdfstring{$P_d$}{}}
The best way to prove the exactness of $\eta$ is by an abstract algebraization of $\eta$.
Consider the multiplicative group ${K_d}^*$ of the field $K_d=\fr \frac{\bar{\mathds{Q}}[X,Y]}{<P_d>}$, as a $\mathds{Z}$-module.
The second exterior product of ${K_d}^*$ is ${K_d}^* \wedge {K_d}^*$. Note that the associated group operation in ${K_d}^*$ and ${K_d}^* \wedge {K_d}^*$ are respectively multiplication and addition.
 Consider the alternative bi-linear map $\imath:{K_d}^* \times {K_d}^* \rightarrow {\Upomega}_C^1 $ defined by:
\begin{align*}
   \imath \colon & {K_d}^* \times {K_d}^* \to {\Upomega}_C^1 \\
  &(f,g) \mapsto \log|g| d\arg f- \log |f| d\arg g.
\end{align*}
Where, $d\arg f=\IMM(d\log f)=\IMM(df/f)$, $C$ is the curve of $P_d$, minus the set of zeros and poles of $f$ and $g$.
Moreover, ${\Upomega}_C^1 $ is the $\mathds{C}$-vector space of smooth differential one-forms on $C$. According to the universal property of the exterior product, there is a unique morphism of $\mathds{Z}$-modules, $\bar{\imath}:{K_d}^* \wedge {K_d}^* \rightarrow {\Upomega}_C^1 $, such that the following diagram commutes. 

\[
  \begin{tikzcd}
     {K_d}^* \wedge {K_d}^* \arrow{dr}{\bar{\imath}} & \\
    {K_d}^* \times {K_d}^*  \arrow{r}[swap]{\imath} 
    \arrow{u}{\bigwedge}& {\Upomega}_C^1
  \end{tikzcd}
\]

where $\bigwedge$ is defined by:
\begin{align*}
   \bigwedge \colon & {K_d}^* \times {K_d}^* \to {K_d}^* \wedge {K_d}^* \\
  &(f,g) \mapsto f \wedge g
\end{align*}
 Note that according to the definitions of $\imath (f,g)$ and $\eta$ we have $ \eta_{(f,g)}= \imath (f,g)$.\\
 
By using the the universal property of wedge product, in the next lemma we check that torsion elements of  ${K_d}^* \wedge {K_d}^*$ belong to the kernel of $\bar{\imath}$. 

\begin{lemma} \label{lem2.1}
If  $f\wedge g = f' \wedge g'$, then $\imath(f,g)=\imath(f',g')$. Moreover, if $\sum_{i=1}^{n}  \epsilon_i f_i\wedge g_i =0$, then $\sum_{i=1}^{n}\epsilon_i \imath(f_i,g_i) =0$, where $\epsilon_i \in \mathds{Z}$, especially the torsion elements of ${K_d}^* \wedge {K_d}^*$ are sent to zero by $\bar{\imath}$.
\end{lemma}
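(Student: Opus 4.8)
The plan is to deduce everything from the universal property of the second exterior power, applied in the category of $\mathds{Z}$-modules, exactly as foreshadowed by the commuting triangle preceding the statement. First I would record that $\imath$ is genuinely $\mathds{Z}$-bilinear and alternating: since $\log|gg'|=\log|g|+\log|g'|$ and $d\arg(gg')=\IMM(d\log(gg'))=d\arg g+d\arg g'$, one gets $\imath(f,gg')=\imath(f,g)+\imath(f,g')$ and likewise additivity in the first slot, while $\imath(f,f)=\log|f|\,d\arg f-\log|f|\,d\arg f=0$. One mild point to address here is that $\imath(f,g)$ is a priori a form only on the dense open subset $C_{f,g}\subseteq C$ obtained by deleting the zeros and poles of $f$ and $g$; when comparing $\imath(f,g)$ with $\imath(f',g')$ one restricts both to the common dense open set $C_{f,g}\cap C_{f',g'}$, and since a smooth form on $C$ is determined by its restriction to a dense open subset, an identity valid there is an identity of forms.

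Granting the alternating and bilinear properties, the universal property of $\wedge^2_{\mathds{Z}}$ furnishes a unique morphism of $\mathds{Z}$-modules $\bar{\imath}\colon{K_d}^*\wedge{K_d}^*\to{\Upomega}_C^1$ with $\bar{\imath}(f\wedge g)=\imath(f,g)$; this is precisely the arrow making the triangle commute. Both assertions of the lemma are then immediate. If $f\wedge g=f'\wedge g'$ then $\imath(f,g)=\bar{\imath}(f\wedge g)=\bar{\imath}(f'\wedge g')=\imath(f',g')$. And if $\sum_{i}\epsilon_i f_i\wedge g_i=0$ then, by $\mathds{Z}$-linearity of $\bar{\imath}$, $\sum_i\epsilon_i\imath(f_i,g_i)=\bar{\imath}\bigl(\sum_i\epsilon_i f_i\wedge g_i\bigr)=\bar{\imath}(0)=0$.

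For the final clause, let $t\in{K_d}^*\wedge{K_d}^*$ be a torsion element, say $nt=0$ for some integer $n\geq1$. Applying $\bar{\imath}$ gives $n\cdot\bar{\imath}(t)=\bar{\imath}(nt)=0$ in ${\Upomega}_C^1$; but ${\Upomega}_C^1$ is a $\mathds{C}$-vector space, hence torsion-free as an abelian group, so $\bar{\imath}(t)=0$.

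I do not expect a serious obstacle: the content is entirely formal once $\imath$ is checked to be alternating and bilinear. The only points requiring a little care — and the ones I would state explicitly — are the bookkeeping of the domains of the differential forms noted in the first paragraph, and the insistence that the exterior power and its universal property be taken over $\mathds{Z}$ rather than over a field, since it is precisely the $\mathds{Z}$-module structure that makes the torsion statement meaningful and that will later be invoked to discard torsion classes (such as roots of unity wedged with other elements of ${K_d}^*$).
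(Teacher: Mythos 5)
Your proposal is correct and follows essentially the same route as the paper: the first two claims from the universal property and $\mathds{Z}$-linearity of $\bar{\imath}$, and the torsion claim from the fact that ${\Upomega}_C^1$ is a $\mathds{C}$-vector space and hence torsion-free. The extra care you take in verifying that $\imath$ is alternating and bilinear, and in handling the domains of definition of the forms, is detail the paper leaves implicit but does not change the argument.
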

\begin{proof}
The first part is clear by the universal property. For the second part, $\bar{\imath}$ is a morphism of $\mathds{Z}$- module, so $\sum_{i=1}^{n}\epsilon_i\imath(f_i,g_i)=\sum_{i=1}^{n}\epsilon_i\bar{\imath}(f_i \wedge g_i)=\bar{\imath}(\sum_{i=1}^{n} \epsilon_i f_i \wedge g_i )=0$. Finally, if $\sum_{i=1}^{n}  \epsilon_i f_i\wedge g_i $  is a torsion element in ${K_d}^* \wedge {K_d}^*$, 
there is an integer $n$ such that $n\left(\sum_{i=1}^{n}  \epsilon_i f_i\wedge g_i \right)=0$. Thus,
$\bar{\imath}\left(n(\sum_{i=1}^{n}  \epsilon_i f_i\wedge g_i )\right)= n\left(\bar{\imath}(\sum_{i=1}^{n}  \epsilon_i f_i\wedge g_i )\right)=0$. Hence, the differential form $\bar{\imath}\left(\sum_{i=1}^{n}  \epsilon_i f_i\wedge g_i \right)$ is a torsion element in the $\mathds{C}$-vector space ${\Upomega}_C^1$, so $\bar{\imath}\left(\sum_{i=1}^{n}  \epsilon_i f_i\wedge g_i \right)=0$.
\end{proof}
\begin{example}\label{ex}
  For all $g \in K_d^*$, $g \wedge -1$  is a torsion element in ${K_d}^* \wedge {K_d}^*$.
\end{example}
\begin{proof} We have
$g \wedge 1 = g \wedge (1 \cdot 1) = g \wedge 1 + g\wedge 1$, so $g \wedge 1=0$. Moreover we have
$0 =g\wedge 1 =(g \wedge (-1)(-1))= (g \wedge -1) + (g \wedge -1)= 2.(g\wedge -1)$,  so $g\wedge -1$ is a torsion element.
\end{proof}
The following theorem is the key to find a volume function and to generalize \cref{ex 2.1}.
\begin{proposition} \label{11111}
If $x,y \in {K_d}^*$ and $x \wedge y=\sum_{i=1}^{n} z_i\wedge (1-z_i )$ modulo some torsion elements in ${K_d}^* \wedge {K_d}^*$, then $(-\sum_{i=1}^{n}D(z_i))$ is a primitive form for $\eta$ restricted to smooth zeroes of $P_d(x,y)$.
\end{proposition}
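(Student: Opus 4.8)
The plan is to push the given wedge identity through the $\mathds{Z}$-module morphism $\bar{\imath}$ constructed above, and then read off the conclusion from the commutativity of the diagram together with the Fact that $-dD(z)=\eta(z,1-z)$.

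First I would recall that, since $x$ and $y$ here are the classes of the coordinate functions $X,Y$ in $K_d^*$ (so that $C$ is the curve cut out by $P_d(x,y)=0$), the observation $\eta_{(f,g)}=\imath(f,g)$ gives $\eta|_C=\imath(x,y)=\bar{\imath}(x\wedge y)$ on the dense open subset of $C$ obtained by deleting the (finitely many) zeros and poles of $x$ and of $y$. Next I would write the hypothesis as $x\wedge y=\sum_{i=1}^{n}z_i\wedge(1-z_i)+t$ with $t$ a torsion element of $K_d^*\wedge K_d^*$, and apply $\bar{\imath}$ to both sides. By \cref{lem2.1} the torsion term $t$ is sent to $0$, and by the universal property (the commuting triangle) each term $\bar{\imath}(z_i\wedge(1-z_i))$ equals $\imath(z_i,1-z_i)$; hence $\eta|_C=\sum_{i=1}^{n}\imath(z_i,1-z_i)$ on the common domain where all the $z_i,1-z_i$ (and $x,y$) are invertible.

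Then I would invoke the Fact: for each $i$, $\imath(z_i,1-z_i)=\eta(z_i,1-z_i)=-dD(z_i)$, where $D(z_i)$ means the pullback of the Bloch–Wigner function along the rational function $z_i$ on $C$. Summing over $i$ yields $\eta|_C=-d\bigl(\sum_{i=1}^{n}D(z_i)\bigr)$ on that open dense subset, i.e. $-\sum_{i=1}^{n}D(z_i)$ is a primitive of $\eta$ there. Finally, to upgrade this to the full smooth locus of $P_d$, I would use that $D$ is continuous (indeed bounded) on all of $\mathds{C}$ and extends continuously at $\infty$, so $\sum_{i=1}^{n}D(z_i)$ extends continuously across the deleted points; since $\eta$ is itself smooth on the entire smooth locus of $P_d$ and $\eta-d\bigl(-\sum_i D(z_i)\bigr)$ vanishes on a dense subset, the identity propagates, giving the claim.

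I expect the genuine algebra to be a short diagram chase; the delicate part is the last step of analytic bookkeeping — checking that the excluded locus is exactly finite, that the candidate primitive extends continuously (and is a primitive in the appropriate smooth, resp. real-analytic-away-from-the-branch-points, sense) across the points where some $z_i\in\{0,1,\infty\}$, which is where one must be careful about the non-differentiability of $D$ at $0$ and $1$.
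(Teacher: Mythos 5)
Your proposal matches the paper's proof: apply $\bar{\imath}$ to the wedge identity, use \cref{lem2.1} to discard the torsion part, convert each $\bar{\imath}(z_i\wedge(1-z_i))$ to $\imath(z_i,1-z_i)$ via the commuting diagram, and conclude with the Fact $-dD(z)=\eta(z,1-z)$. The only difference is your closing remark on extending the identity across the excluded zeros and poles, a point of analytic care the paper passes over silently; otherwise the argument is the same.
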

\begin{proof}
We have:
$$\bar{\imath}(x \wedge y)=\bar{\imath}\left(\sum_{i=1}^{n}z_i\wedge (1-z_i )+\sum_{i=1}^{n} f_i \wedge g_i\right), \ \ \text{where $f_i \wedge g_i$ are torsion elements}. $$
Since $\bar{\imath}$ is a morphism of abelian groups, and $\sum_{i=1}^{n} f_i \wedge g_i$ is a torsion element, by \cref{lem2.1} and \cref{ex 2.1}, we have:
 \begin{align*}
     & \eta_{(x,y)}= \imath(x,y)  =\bar{\imath}(x \wedge y)= \bar{\imath}\left(\sum_{i=1}^{n}(z_i\wedge (1-z_i ))+\sum_{i=1}^{n} f_i \wedge g_i\right)\\
     & =\sum_{i=1}^{n}\bar{\imath}
     \left (z_i\wedge (1-z_i )\right)= \sum_{i=1}^{n}\imath (z_i,1-z_i ) = -(\sum_{i=1}^{n}d D(z_i)) =d(-\sum_{i=1}^{n} D(z_i)).
 \end{align*}
\end{proof}
\begin{remark}
We notice that $\wedge$ computation for finding a volume function does not depend on the torsion elements, so in the sequel of this section we use the notation $\doteq$ which refers to equality up to torsion elements; For example, for all $f,g$ we have $(-f) \wedge (-g)\doteq f\wedge (-g)\doteq(-f) \wedge g\doteq f \wedge g$.
\end{remark}
In the following lemma, we show two equalities needed for proving the exactness of $P_d$.
\begin{lemma}\label{lem 2.2 }
We have the following equalities: 
\begin{equation} \label{16}
     x \wedge (1- x/y)  -y \wedge (1- y/x ) \doteq x/y \wedge (1- x/y) - x \wedge y ,
 \end{equation}
 \begin{equation} \label{17}
 y \wedge (1-(y/x)^{d+1}) - x \wedge (1-(x/y)^{d+1}) \doteq (y/x) \wedge (1-(y/x)^{d+1}))+ (d+1) x \wedge y . 
\end{equation}
 \end{lemma}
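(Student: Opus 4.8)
The plan is to verify both identities by direct manipulation inside the $\mathds{Z}$-module ${K_d}^*\wedge{K_d}^*$ modulo torsion, using only bilinearity, the alternating law $f\wedge f=0$ (which gives $f\wedge g\doteq-(g\wedge f)$ and $f^{\,n}\wedge g=n\,(f\wedge g)$ for $n\in\mathds{Z}$), the relation $f\wedge 1=0$ established inside the proof of \cref{ex}, and the fact recorded in \cref{ex} that $f\wedge(-1)$ is a torsion element, so $f\wedge(-1)\doteq 0$. The only genuinely arithmetic ingredients are the two elementary factorizations in $K_d$
$$1-\frac yx=-\frac yx\left(1-\frac xy\right),\qquad 1-\Bigl(\frac yx\Bigr)^{d+1}=-\Bigl(\frac yx\Bigr)^{d+1}\left(1-\Bigl(\frac xy\Bigr)^{d+1}\right),$$
both of which are immediate; everything else is bookkeeping of signs and of which summands are torsion.

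For \eqref{16} I would first substitute the left-hand factorization into $y\wedge(1-y/x)$ and expand: the factor $-1$ disappears modulo torsion and $y\wedge(y/x)=y\wedge y-y\wedge x\doteq x\wedge y$, so $y\wedge(1-y/x)\doteq x\wedge y+y\wedge(1-x/y)$. Next, writing $x=(x/y)\cdot y$ yields $x\wedge(1-x/y)=(x/y)\wedge(1-x/y)+y\wedge(1-x/y)$. Subtracting the two relations, the copies of $y\wedge(1-x/y)$ cancel and one is left with $x\wedge(1-x/y)-y\wedge(1-y/x)\doteq(x/y)\wedge(1-x/y)-x\wedge y$, which is \eqref{16}.

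For \eqref{17} I would set $u=x/y$, so that $y/x=u^{-1}$, $(y/x)^{d+1}=u^{-(d+1)}$ and $(x/y)^{d+1}=u^{d+1}$, and run the same three steps with the right-hand factorization. Expanding $y\wedge(1-u^{-(d+1)})$ gives $-(d+1)\,(y\wedge u)+y\wedge(1-u^{d+1})$ modulo torsion, and since $y\wedge u=y\wedge x-y\wedge y\doteq-(x\wedge y)$ this equals $(d+1)\,x\wedge y+y\wedge(1-u^{d+1})$. Writing $x=u\cdot y$ gives $x\wedge(1-u^{d+1})=u\wedge(1-u^{d+1})+y\wedge(1-u^{d+1})$; subtracting leaves $(d+1)\,x\wedge y-u\wedge(1-u^{d+1})$. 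It then remains to identify $-u\wedge(1-u^{d+1})$ with $u^{-1}\wedge(1-u^{-(d+1)})=(y/x)\wedge(1-(y/x)^{d+1})$, which follows by applying the right-hand factorization once more to this last expression and discarding the $f\wedge(-1)$ term together with $u^{-1}\wedge u=-(u\wedge u)=0$. This produces \eqref{17}.

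Since every step is forced there is no serious obstacle: the only real care needed is with signs, and the point most easily overlooked is that the coefficient $d+1$ in \eqref{17} appears precisely because $u^{d+1}\wedge(-)=(d+1)\,u\wedge(-)$ in a $\mathds{Z}$-module written multiplicatively. In the write-up I would present \eqref{16} in detail and then give \eqref{17} as its line-by-line analogue, flagging only where the exponent $d+1$ enters.
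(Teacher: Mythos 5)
Your proposal is correct and follows essentially the same route as the paper: both hinge on the decomposition $x=(x/y)\cdot y$ together with the identity $\frac{1-x/y}{1-y/x}=-\frac{x}{y}$ (which you use in the equivalent factored form $1-y/x=-(y/x)(1-x/y)$), discarding $f\wedge(-1)$ as torsion. The only difference is that the paper proves \eqref{16} in detail and dismisses \eqref{17} with ``proved similarly,'' whereas you carry out the second computation explicitly, correctly locating where the coefficient $d+1$ arises from $u^{\pm(d+1)}\wedge(-)=\pm(d+1)\,u\wedge(-)$.
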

 \begin{proof}
  We just prove the first equality, the second one is proved similarly. By replacing $x$ with $\frac{x}{y}.y$ we have:
\begin{align*}
   & x \wedge (1- x/y)  - y \wedge (1- y/x ) = x/y \wedge (1- x/y) + y \wedge (1- x/y) - y \wedge (1- y/x ) = \\
   & x/y \wedge (1- x/y) + y \wedge \frac{1- x/y}{1- y/x}
   = x/y \wedge (1- x/y) + y \wedge \frac{\frac{y-x}{y}}{\frac{x-y}{x}}=
    x/y \wedge (1- x/y) + y \wedge \frac{-x}{y}=\\
    & x/y \wedge (1- x/y) + y \wedge-1 + y \wedge x - y \wedge y\doteq
    x/y \wedge (1- x/y) - x \wedge y .
   \end{align*}
   
\end{proof}
We recover:
    \begin{example}\label{th 2.2}
The polynomial $P_1$ is exact and a volume function is $-D(-x)$.
\end{example}
In chapter 7 of \cite{0} and \cite{8}, one can find a similar method to prove this fact, and to compute the Mahler measure. 
\begin{proof}
We notice that in $K_1:=\fr \frac{\bar{\mathds{Q}}[X,Y]}{<P_1>}$, we have $1+x=-y$. It yields:
$$x \wedge y \doteq (-x) \wedge (-y) = (-x)\wedge  (1-(-x)).  $$
Then according to \cref{11111}, $-D(-x)$ is a volume function and $P_1$ is exact.
\end{proof}
The previous example generalizes to the whole family;
\begin{theorem}\label{th2.2}
For all  $d\geq 1$, $P_d$ is an exact polynomial, and for $d\geq 2$ a volume function, denoted by $V$, is defined as follows:
\begin{align*}
    V(x,y)=\frac{1}{(d+1)(d+2)}[D(y^{d+1})-D(x^{d+1})-D((y/x)^{d+1})]+\frac{1}{(d+2)}[D(x)-D(y)-D(x/y)].
\end{align*}
\end{theorem}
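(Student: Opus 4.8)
The plan is to compute the class of $x\wedge y$ in ${K_d}^{*}\wedge {K_d}^{*}$ modulo torsion as a $\mathds{Z}$-linear combination of Steinberg-type elements $z\wedge(1-z)$, and then read off a primitive of $\eta|_C$ from \cref{11111}. The algebraic input is a multiplicative form of the defining relation of $K_d$: grouping $P_d=\sum_{0\le i+j\le d}x^iy^j$ by the power of $x$ and summing the inner geometric series in $y$ gives the polynomial identity
$$(1-x)(1-y)\,P_d(x,y)=(1-x^{d+1})-(1-x)\,y\,h_d(x,y),\qquad h_d(x,y):=\frac{x^{d+1}-y^{d+1}}{x-y},$$
so that in ${K_d}^{*}$ one has $1-x^{d+1}=(1-x)\,y\,h_d$ and, symmetrically, $1-y^{d+1}=(1-y)\,x\,h_d$. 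Combined with the purely formal identities $1-(x/y)^{d+1}=(1-x/y)\,h_d\,y^{-d}$ and $1-(y/x)^{d+1}=(1-y/x)\,h_d\,x^{-d}$, eliminating $h_d$ yields the two relations in ${K_d}^{*}$
$$\frac{(1-x^{d+1})(1-x/y)}{(1-x)\,y^{d+1}\,(1-(x/y)^{d+1})}=1,\qquad\qquad \frac{(1-y^{d+1})(1-y/x)}{(1-y)\,x^{d+1}\,(1-(y/x)^{d+1})}=1.$$

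Next I would isolate $x\wedge y$ using \eqref{16} and \eqref{17}. Rearranging \eqref{16} gives $x\wedge y\doteq(x/y)\wedge(1-x/y)-x\wedge(1-x/y)+y\wedge(1-y/x)$, rearranging \eqref{17} gives $(d+1)\,x\wedge y\doteq y\wedge(1-(y/x)^{d+1})-x\wedge(1-(x/y)^{d+1})-(y/x)\wedge(1-(y/x)^{d+1})$, and adding them gives $(d+2)\,x\wedge y\doteq A$ with
$$A:=(x/y)\wedge(1-x/y)-x\wedge(1-x/y)+y\wedge(1-y/x)+y\wedge(1-(y/x)^{d+1})-x\wedge(1-(x/y)^{d+1})-(y/x)\wedge(1-(y/x)^{d+1});$$
this step does not use $P_d=0$. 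On the other hand, the stated volume function $V$ follows from \cref{11111} as soon as one knows $(d+2)\,x\wedge y\doteq B$, where
$$B:=-y\wedge(1-y^{d+1})+x\wedge(1-x^{d+1})+(y/x)\wedge(1-(y/x)^{d+1})-x\wedge(1-x)+y\wedge(1-y)+(x/y)\wedge(1-x/y):$$
multiplying $(d+2)\,x\wedge y\doteq B$ by $d+1$ and using $z^{d+1}\wedge(1-z^{d+1})=(d+1)\,z\wedge(1-z^{d+1})$ turns $B$ into a $\mathds{Z}$-combination of terms $z\wedge(1-z)$, so \cref{11111} applies, and since $\bar{\imath}$ takes values in the $\mathds{C}$-vector space ${\Upomega}_C^1$ one may divide the resulting primitive by $(d+1)(d+2)$ — the outcome is exactly $V$. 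Hence everything reduces to proving $A\doteq B$ in ${K_d}^{*}\wedge{K_d}^{*}$, which is the only place the relations of the first paragraph are used.

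To prove $A\doteq B$ I would observe that after the two copies of $(x/y)\wedge(1-x/y)$ cancel, $A-B$ equals
$$x\wedge\!\left(\frac{1-x}{(1-x/y)(1-(x/y)^{d+1})(1-x^{d+1})}\right)+y\wedge\!\left(\frac{(1-y/x)(1-(y/x)^{d+1})(1-y^{d+1})}{1-y}\right)-2\,(y/x)\wedge(1-(y/x)^{d+1}).$$
Substituting the two relations of the first paragraph simplifies the argument of the first wedge to $\big((1-(x/y)^{d+1})^{2}\,y^{d+1}\big)^{-1}$ and that of the second to $x^{d+1}(1-(y/x)^{d+1})^{2}$; expanding the wedges bilinearly, $A-B$ becomes a nonzero integer multiple of the difference of the two sides of \eqref{17}, and that difference is torsion by \eqref{17} itself. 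Thus $(d+2)\,x\wedge y\doteq B$; combined with the reduction of the previous paragraph this gives the formula for $V$, and in particular $\eta|_C$ is exact, so $P_d$ is exact. For $d=1$ this is \cref{th 2.2}.

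I expect the difficulty to be entirely computational: (i) spotting the correct multiplicative form of the defining relation, i.e. the polynomial identity $(1-x)(1-y)P_d=(1-x^{d+1})-(1-x)\,y\,h_d$, which is exactly what makes the factors $h_d$ cancel in the first paragraph; and (ii) the sign- and exponent-bookkeeping in the exterior algebra, above all tracking how the powers of $x$ and $y$ occurring in the two relations get absorbed into multiples of $x\wedge y$ when the wedges are expanded. One also uses \cref{11111} with integer coefficients, as in \cref{lem2.1}, and rescales the primitive by the constant $(d+1)(d+2)$, which is harmless since $\bar{\imath}$ has values in a $\mathds{C}$-vector space.
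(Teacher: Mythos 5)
Your argument is correct and is essentially the paper's own proof reorganized: you derive the same multiplicative relation \cref{13} (and its symmetric counterpart), use the same identities \cref{16} and \cref{17}, and arrive at the identical Steinberg decomposition of $(d+2)\,x\wedge y$, from which $V$ follows by \cref{11111} exactly as in the paper. The only cosmetic differences are that you obtain \cref{13} by summing geometric series directly instead of via the recursion $P_d=P_{d-1}+\cdots$, and that you apply \cref{16} and \cref{17} before, rather than after, substituting the relation coming from $P_d=0$.
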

\begin{proof}
For $P_1$, we have already proved in \cref{th 2.2}, that is exact with the volume function $-D(-x)$. For $d\geq 2$ we have these equations:
\begin{equation*} 
    P_d(x,y)=P_{d-1}(x,y)+y^d(\frac{1-(x/y)^{d+1}}{1-(x/y)}) \ \ \text{and}\ \ 
    P_d(x,y)=y P_{d-1}(x,y)+(\frac{1-x^{d+1}}{1-x}).
\end{equation*}
Therefore, at smooth zeros of $P_d$, we have:
\begin{equation*} 
   0= P_d(x,y)=P_{d-1}(x,y)+y^d(\frac{1-(x/y)^{d+1}}{1-(x/y)})=y P_{d-1}(x,y)+(\frac{1-x^{d+1}}{1-x}).
\end{equation*}
In other words, we have:
\begin{equation} \label{11}
   P_{d-1}(x,y)=-y^d(\frac{1-(x/y)^{d+1}}{1-(x/y)})=
-1/y(\frac{1-x^{d+1}}{1-x}),
\end{equation}
hence,
\begin{equation} \label{13}
    y^{d+1}= \frac{1-x^{d+1}}{1-x} \frac{1-(x/y)}{1-(x/y)^{d+1}}.
\end{equation}
Instead of $x \wedge y$, we compute $\frac{1}{d+1} x \wedge y^{d+1}$.
By replacing $y^{d+1}$ from \cref{13} in  $\frac{1}{d+1} x \wedge y^{d+1}$ we have :
 \begin{equation} \label{14}
     x \wedge y =\frac{1}{d+1} \left(x \wedge (1-x^{d+1}) - x \wedge (1-x) +  x \wedge (1- x/y ) - x \wedge (1-(x/y)^{d+1})   \right).
 \end{equation}
 Because $P_d$, for $d\geq 1$, is a symmetric polynomial, so we can switch $x$ and $y$; Similarly, we have:
 \begin{equation}\label{15}
     y \wedge x =\frac{1}{d+1} \left(  y \wedge (1-y^{d+1})  - y \wedge (1- y) +  y \wedge (1- y/x ) - y \wedge (1-(y/x)^{d+1})   \right).
 \end{equation}
By subtracting \cref{15} from \cref{14}, we have:
 \begin{align}\label{d}
   2(d+1) (x \wedge y)= x \wedge (1-x^{d+1})-y \wedge (1-y^{d+1}) - x \wedge (1-x) + y \wedge (1-y) 
    \end{align}
  \begin{align*}
     +x \wedge (1- x/y) - y \wedge (1- y/x )  -x \wedge (1-(x/y)^{d+1}) + y \wedge (1-(y/x)^{d+1}). 
 \end{align*}
  By replacing  \cref{17} and \cref{16} in \cref{d} and simplifying (based on \cref{lem 2.2 }), we have :
\begin{align*}
    & (d+2)(x\wedge y)\doteq 1/(d+1)x^{d+1}\wedge (1-x^{d+1}) - 1/(d+1)y^{d+1} \wedge (1-y^{d+1})-x \wedge (1-x) +\\
    & y \wedge (1-y)+ x/y \wedge (1- x/y)+ 1/(d+1)(y/x)^{d+1} \wedge (1-(y/x)^{d+1})).
   \end{align*}
In other words, we have:
\begin{align*}
   & (x\wedge y)\doteq \frac{1}{(d+1)(d+2)} \left( x^{d+1} \wedge (1-x^{d+1})- y^{d+1} \wedge (1-y^{d+1})+(y/x)^{d+1} \wedge (1- (y/x)^{d+1})\right)\\
   &  +\frac{1}{d+2}\left( y \wedge (1-y)- x \wedge (1-x) + x/y \wedge (1- x/y ) \right).
\end{align*}
Based on \cref{11111} the volume function is: 
\begin{align*}
    V(x,y)=\frac{1}{(d+1)(d+2)}[D(y^{d+1})-D(x^{d+1})-D((y/x)^{d+1})]+\frac{1}{(d+2)}[D(x)-D(y)-D(x/y)];
\end{align*}
which proves the exactness of $P_d$. 
\end{proof}
\section{Toric points and sign of slopes for $P_d$}\label{Toric points}
As we have already mentioned, there is a closed formula in \cite{1} to compute the Mahler measure of exact, more precisely regular polynomials (see \cref{regular}) as follows:
\begin{align}\label{Antonin formula}
    m(P) =  \frac{1}{2\pi} \sum  \epsilon(x,y)V(x,y).
\end{align}
The summation will be on the set of toric points of $P$ (see \cref{def}); $\epsilon(x,y)$ is the opposite of the sign of the imaginary part of $\frac{x\partial_x P}{y\partial_yP}$ at toric point $(x,y)$ and $V$ is a volume function.\\
Since $P_d$ is exact, we use the formula to compute $m(P_d)$. In order to apply it we need to compute the toric points of $P_d$ and the sign of the imaginary part of $\frac{x\partial_x P_d}{y\partial_yP_d}$ at toric points. 
\begin{definition}\label{def}
The set of \textbf{toric points} of  $P \in \mathbb{C}[X,Y]$ is defined by:
$$\{ (x,y) \in {\mathbb{C}^*}^2 | P(x,y)= 0 , |x|=|y|=1\}.$$
\end{definition}
We notice that the necessary condition on $P$ for this formula to apply is that, for each toric point of $P$, the fraction  $\frac{x\partial_x P}{y\partial_yP}$, should not be real. This property leads to the definition of regular polynomials. We briefly explain some new definitions but for more information see \cite{1}.

%or $\frac{y\partial_yP}{x\partial_x P}$
\begin{definition}
The \textbf{logarithmic Gauss map} $\gamma : C \rightarrow \mathds{P^1}(\mathds C)$ is
defined by $\gamma (x,y) = [x\partial_xP,y\partial_yP]$.
\end{definition}
\begin{definition}\label{regular}
An exact polynomial $P(x,y)$ is called \textbf{regular} if for each toric point, $(x,y)$, we have $\gamma (x,y) \notin \mathds{P^1}(\mathds R)$.
\end{definition}
From the previous definition, $\gamma (x,y)$ is a point in projective plane. If $P$ is a regular polynomial, then in particular $y\partial_yP|_{(x,y)} \neq 0$ and $x\partial_xP|_{(x,y)} \neq 0$, and consequently, $[x\partial_xP,y\partial_yP]= [\frac{x\partial_x P}{y\partial_yP},1] \in \mathds{P^1}(\mathds C) \setminus \mathds{P^1}(\mathds R) $. Therefore, for the regular polynomial $P$, the value of $\frac{x \partial_xP}{y\partial_yP}$ at a toric point $(x,y)$, is a non real number, so we can use the mentioned formula to compute $m(P)$.
We use the two point of views in this article: $\frac{x \partial_xP}{y\partial_yP}$ and $\gamma(x,y)$. 
 \subsection{Toric points of \texorpdfstring{$P_d$}{}}
The goal of this section is to prove \cref{th3.2}; 
\begin{proposition}\label{th3.2}
The set of toric pints of $P_d(x,y)$ is as follows:

  $$\{(x,y)\in \mathds{C^*}^2\mid x^{d+1}=y^{d+1}=1,x\neq 1,y\neq 1\ , x \neq y\} \cup 
\{(x,y)\in \mathds{C^*}^2\mid x^{d+2}=y^{d+2}=1 ,x\neq 1,y\neq 1, x \neq y\}.$$

\end{proposition}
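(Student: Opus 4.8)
The plan is to put $P_d$ into a closed form, then solve $P_d(x,y)=0$ under $|x|=|y|=1$ by exploiting the reflection $\overline{x}=x^{-1}$. Writing $[n]_t:=1+t+\dots+t^{n-1}$ and summing the geometric sums $\sum_{i+j=k}x^iy^j=(x^{k+1}-y^{k+1})/(x-y)$ over $0\le k\le d$, one gets for $x\neq y$, $x\neq1$, $y\neq1$,
$$P_d(x,y)=\frac{[d+2]_x-[d+2]_y}{x-y}=\frac{(x^{d+2}-1)(y-1)-(x-1)(y^{d+2}-1)}{(x-1)(y-1)(x-y)},$$
which is the identity underlying the relation $P_d(x,y)=P_\infty(x^{d+2},y,x,y^{d+2})/\bigl((1-x)(1-y)(x-y)\bigr)$ from the Introduction; clearing denominators, $P_d(x,y)\,(x-1)(y-1)(x-y)=(x^{d+2}-1)(y-1)-(x-1)(y^{d+2}-1)$ holds identically in $\mathds{C}[x,y]$.

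First I would show that no toric point lies on the degenerate locus. Let $(x,y)$ be toric. If $x=y=1$ then $P_d(1,1)=\binom{d+2}{2}\neq0$. If $x=1\neq y$, then $(y-1)P_d(1,y)=\sum_{m=1}^{d+1}(y^m-1)=[d+2]_y-(d+2)$, so $P_d(1,y)=0$ would give $[d+2]_y=d+2$, impossible for $|y|=1$, $y\neq1$, by the equality case of the triangle inequality; the case $y=1$ follows from $P_d(x,y)=P_d(y,x)$. If $x=y\neq1$, then $(x-1)P_d(x,x)=(d+1)x^{d+1}-[d+1]_x$, so $P_d(x,x)=0$ would give $(d+1)x^{d+1}=[d+1]_x$, again impossible on the unit circle unless $x=1$. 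So every toric point has $x\neq1$, $y\neq1$, $x\neq y$.

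On this locus the displayed formula turns $P_d(x,y)=0$ into $[d+2]_x=[d+2]_y=:c$. If $c=0$, then $x^{d+2}=1$ and $y^{d+2}=1$ (the second family). If $c\neq0$, I would use $\overline{x}=x^{-1}$: since $\overline{[d+2]_x}=\sum_{k=0}^{d+1}x^{-k}=x^{-(d+1)}[d+2]_x$, the equalities $\bar c=x^{-(d+1)}c=y^{-(d+1)}c$ force $x^{d+1}=y^{d+1}=:w$; then $[d+2]_x=[d+1]_x+x^{d+1}$ and its $y$-analogue give $[d+1]_x=[d+1]_y$, i.e. $(w-1)(y-1)=(w-1)(x-1)$, whence $w=1$ since $x\neq y$ (the first family). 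Conversely both families are toric: from the polynomial identity above, if $x^{d+2}=y^{d+2}=1$ the right-hand side vanishes, and if $x^{d+1}=y^{d+1}=1$ then $x^{d+2}-1=x-1$ and $y^{d+2}-1=y-1$ so it again vanishes, while $(x-1)(y-1)(x-y)\neq0$; hence $P_d(x,y)=0$ in both cases.

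I expect the main obstacle to be the step $c\neq0\Rightarrow x^{d+1}=y^{d+1}=1$: one must extract two rotation conditions from a single scalar equation $[d+2]_x=[d+2]_y$, and the device that makes it go through is that on the unit circle $[d+2]_x$ and $\overline{[d+2]_x}$ differ by exactly the factor $x^{-(d+1)}$. The remaining ingredients---the closed form and the exclusion of the degenerate locus---are routine, the latter being just the equality case of the triangle inequality.
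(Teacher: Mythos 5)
Your proof is correct, and it takes a genuinely different route from the paper's. The paper establishes containment in two steps: first it exploits $\overline{x}=x^{-1}$ through the reciprocal polynomial, using the identity $P_d(x,y)+x^{d+1}y^dP_d^*(x,y)=\frac{(y^{d+2}-1)(x^{d+1}-1)}{(y-1)(x-1)}$ together with its $x\leftrightarrow y$ symmetric counterpart to force one of four cases (two of which are incompatible); it then rules out $x=y$ by a separate Gauss--Lucas argument, since $P_d(x,x)$ is the derivative of a polynomial whose roots are roots of unity; finally it verifies the converse by two separate direct computations. You instead work from the single closed form $P_d=\frac{[d+2]_x-[d+2]_y}{x-y}$ (the same identity $P_dQ=(x^{d+2}-1)(y-1)-(y^{d+2}-1)(x-1)$ that the paper only introduces later, in the proof of \cref{th4.1}), reduce everything to the one scalar equation $[d+2]_x=[d+2]_y=c$, and split on $c=0$ versus $c\neq 0$; the palindromic relation $\overline{[d+2]_x}=x^{-(d+1)}[d+2]_x$ on the torus is exactly the device that converts the single equation into the two rotation conditions $x^{d+1}=y^{d+1}$, after which $x\neq y$ forces $w=1$. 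Your exclusion of the degenerate locus via the equality case of the triangle inequality is more elementary than Gauss--Lucas and, unlike the paper's \cref{th 1.2} (which only treats $x$ a root of unity), rules out $x=y$ for arbitrary points of the torus; it also handles $x=1$ or $y=1$ uniformly rather than leaving them implicit in the denominators. What the paper's route buys is a template (combining $P$ with $P^*$) that does not require knowing a closed rational form for $P_d$; what yours buys is a single identity serving both directions, a unified dichotomy producing the two families $U_{d+1}$ and $U_{d+2}$ as the branches $c\neq 0$ and $c=0$, and a cleaner conceptual link to the factorization through $P_\infty$ mentioned in the introduction.
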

 For convenience, the first set in \cref{th3.2} is denoted by $U_{d+1}$, and the second one by $U_{d+2}$.

\begin{remark}\label{re}
If $P(x,y)\in \mathds{R}[X,Y]$ the set of toric points of $P(x,y)$ and $P^*(x,y)$ are equal, where $P^*(x,y)=P(1/x,1/y)$, with $x,y $ not equal to zero.
\end{remark}
Let $(x,y)$ be a toric point of $P_d$, using \cref{re} we have:
\begin{equation} \label{eq.1}
    P_d(x,y)=P^*_d(x,y)=0.
\end{equation}
Therefore we have $P_d(x,y)+x^{d+1}y^d P^*_d(x,y)=0$. One may check by a simple computation that we have:
 \begin{equation}\label{eq 1.4}
  P_d(x,y)+x^{d+1}y^d P^*_d(x,y)= \frac{y^{d+2}-1}{y-1} \frac{x^{d+1}-1}{x-1}. 
 \end{equation}
The previous remark leads to the following lemma;
\begin{lemma}\label{th 1.1}
The toric points of $P_d(x,y)$ are contained in:
\begin{align*}\label{set 1.5}
     \{(x,y)\in \mathds{C^*}^2\mid x^{d+1}=y^{d+1}=1,x\neq 1,y\neq 1\} \cup 
\{(x,y)\in \mathds{C^*}^2\mid x^{d+2}=y^{d+2}=1 ,x\neq 1,y\neq 1\}.
\end{align*}

\end{lemma}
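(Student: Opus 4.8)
The plan is to combine the reciprocal identity \cref{eq 1.4} with the symmetry $P_d(x,y)=P_d(y,x)$. Let $(x,y)$ be a toric point of $P_d$, so that $|x|=|y|=1$ and $P_d(x,y)=0$. Since $P_d$ has real coefficients, \cref{re} shows that $(x,y)$ is also a toric point of $P_d^*$, so $P_d^*(x,y)=0$; this is \cref{eq.1}. Hence $P_d(x,y)+x^{d+1}y^dP_d^*(x,y)=0$, and \cref{eq 1.4} rewrites this as
\[
\frac{y^{d+2}-1}{y-1}\cdot\frac{x^{d+1}-1}{x-1}=0 .
\]
It is cleanest to read the two factors as the genuine polynomials $1+y+\cdots+y^{d+1}$ and $1+x+\cdots+x^{d}$, so that the statement stays meaningful (and valid) even when $x$ or $y$ equals $1$. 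A product in a field vanishes iff one factor does, and $1+x+\cdots+x^{d}=0$ forces $x\neq1$ (otherwise the sum is $d+1\neq0$) and hence $x^{d+1}=1$, and similarly for the factor in $y$. Therefore every toric point of $P_d$ satisfies
\[
\bigl(x^{d+1}=1,\ x\neq1\bigr)\quad\text{or}\quad\bigl(y^{d+2}=1,\ y\neq1\bigr).
\]

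Next I use symmetry. As $P_d$ is symmetric, $(y,x)$ is a toric point as well, so the same statement applied to it gives $\bigl(y^{d+1}=1,\ y\neq1\bigr)$ or $\bigl(x^{d+2}=1,\ x\neq1\bigr)$. Taking the conjunction of the two disjunctions and distributing leaves four cases, two of which are impossible: $x^{d+1}=x^{d+2}=1$ forces $x=x^{d+2}/x^{d+1}=1$, contradicting $x\neq1$, and symmetrically $y^{d+1}=y^{d+2}=1$ forces the excluded $y=1$. The two remaining cases are exactly
\[
\{x^{d+1}=y^{d+1}=1,\ x\neq1,\ y\neq1\}\quad\text{and}\quad\{x^{d+2}=y^{d+2}=1,\ x\neq1,\ y\neq1\},
\]
whose union is the set in the statement. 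Note that the conditions $x\neq1$ and $y\neq1$ come out for free, so in particular no toric point lies on the lines $x=1$ or $y=1$.

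Once \cref{eq 1.4} is granted this is essentially bookkeeping, so I do not anticipate a real obstacle. The one point deserving attention is that a single use of \cref{eq 1.4} only yields the \emph{unbalanced} alternative ``$x^{d+1}=1$ or $y^{d+2}=1$''; the symmetric constraint is genuinely needed both to force the two exponents to agree and to discard the degenerate points with $x=1$ or $y=1$. (Alternatively, one may substitute $x^{d+1}=1$ directly into $P_d(x,y)=0$ and collapse the double geometric sum to $\frac{y\,(y^{d+1}-1)}{(y-x)(y-1)}$ when $x,y\neq1$ and $y\neq x$, which forces $y^{d+1}=1$; but the symmetric argument is shorter and treats the boundary cases uniformly.)
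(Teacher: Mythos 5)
Your proof is correct and takes essentially the same route as the paper: use \cref{eq 1.4} at a toric point to obtain one disjunction, apply the symmetry $P_d(x,y)=P_d(y,x)$ to obtain the mirrored disjunction, and rule out the two incompatible combinations since no root of unity other than $1$ can be both a $(d+1)$-st and a $(d+2)$-nd root of unity. Your additional care in reading the factors as the geometric-sum polynomials $1+x+\cdots+x^{d}$ and $1+y+\cdots+y^{d+1}$ (so the conditions $x\neq 1$, $y\neq 1$ come out automatically) is a small refinement that the paper leaves implicit.
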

\begin{proof}
If $(x,y)$ is a toric point of $P_d$, then \cref{eq.1} and \cref{eq 1.4} hold, so we have:
\begin{equation}\label{8}
 \frac{x^{d+2}-1}{x-1}=0   
\ \ \ \text{or} \ \ \  
    \frac{y^{d+1}-1}{y-1}=0. 
\end{equation}
The polynomial $P_d(x,y)$ is a symmetric polynomial, so $P_d(x,y)=P_d(y,x)$. Thus, we switch $x$ and $y$, so $(y,x)$ is a toric point as well as $(x,y)$. Hence, 
\begin{equation*}
    P_d(y,x)+y^{d+1}x^d P^*_d(y,x)=\frac{y^{d+2}-1}{y-1}\ \ \frac{x^{d+1}-1}{x-1}=0 ,
    \end{equation*}
   and we have:
    \begin{equation}\label{88}
 \frac{y^{d+2}-1}{y-1}=0   \ \ \ or \ \ 
    \frac{x^{d+1}-1}{x-1}=0.
\end{equation}
Therefore, according to \cref{8} and \cref{88} there are 4 possibilities:
\begin{enumerate}
  \item $x^{d+2}=1,x\neq 1, y^{d+2}=1 , y\neq 1$.
   \item $x^{d+2}=1, x^{d+1}=1 ,x\neq 1$, which is not compatible.
  \item $y^{d+1}=1, y^{d+2}=1 , y\neq 1$, which is not compatible.
   \item $y^{d+1}=1,y\neq 1, x^{d+1}=1 , x\neq 1$.
\end{enumerate}
  \end{proof}
  
\begin{lemma} \label{th 1.2}
If $(x,y)$ is a toric point of $P_d(x,y)$, then $x \neq y$:
\end{lemma}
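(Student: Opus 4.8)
The plan is to argue by contradiction: suppose $(x,x)$ is a toric point of $P_d$, so $|x|=1$ and $P_d(x,x)=0$. First I would write out $P_d(x,x)$ explicitly. Since $P_d(x,y)=\sum_{0\le i+j\le d} x^i y^j$, setting $y=x$ collects terms by total degree $n=i+j$, and for each $n$ with $0\le n\le d$ there are exactly $n+1$ monomials, each equal to $x^n$. Hence
\[
P_d(x,x)=\sum_{n=0}^{d}(n+1)x^n.
\]
So the claim reduces to showing that this polynomial has no roots on the unit circle (other than possibly values already excluded, but in fact none at all). I would recognize $\sum_{n=0}^d (n+1)x^n$ as the derivative-type generating function: $\sum_{n=0}^{d}(n+1)x^n = \frac{d}{dx}\sum_{n=0}^{d+1}x^n = \frac{d}{dx}\Bigl(\frac{x^{d+2}-1}{x-1}\Bigr)$, which can be put in the closed form $\frac{(d+1)x^{d+2}-(d+2)x^{d+1}+1}{(x-1)^2}$ for $x\neq 1$.

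Next I would show this closed form cannot vanish for $|x|=1$. For $x=1$ we have $P_d(1,1)=\sum_{n=0}^d(n+1)=\binom{d+2}{2}\neq 0$, so $x=1$ is safe. For $|x|=1$, $x\neq 1$, vanishing of $P_d(x,x)$ is equivalent to $(d+1)x^{d+2}-(d+2)x^{d+1}+1=0$, i.e.
\[
(d+1)x^{d+2}+1=(d+2)x^{d+1}.
\]
Taking absolute values with $|x|=1$ gives $|(d+1)x^{d+2}+1| = d+2$. But by the triangle inequality $|(d+1)x^{d+2}+1|\le (d+1)+1 = d+2$, with equality if and only if $(d+1)x^{d+2}$ and $1$ are positive real multiples of each other, i.e. $x^{d+2}=1$. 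Substituting $x^{d+2}=1$ back into the equation yields $(d+1)+1=(d+2)x^{d+1}$, so $x^{d+1}=1$; together with $x^{d+2}=1$ this forces $x=1$, contradicting $x\neq 1$. Alternatively, one can observe directly that $\sum_{n=0}^d (n+1)x^n$ is (up to a unit) a product of cyclotomic-free factors, or appeal to the Eneström–Kakeya theorem, which states that a polynomial with positive, nondecreasing coefficients $1\le 2\le\cdots\le d+1$ has all its roots in $|x|\le 1$, and in fact strictly inside except for forced boundary roots — but the triangle-inequality argument above is cleaner and self-contained.

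The only subtlety — and the step I would be most careful about — is the equality case of the triangle inequality, making sure the degenerate possibility $x^{d+2}=1$ genuinely collapses to $x=1$ rather than producing a spurious root; the chain $x^{d+2}=1 \Rightarrow x^{d+1}=1 \Rightarrow x=1$ handles this cleanly, and I would double-check that no sign or indexing slip makes the bound $d+2$ attainable for some root of unity. Once that is settled, $P_d(x,x)\neq 0$ on the whole unit circle, so no toric point can have $x=y$, which is exactly the assertion.
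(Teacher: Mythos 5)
Your proposal is correct, and it reaches the conclusion by a genuinely different route from the paper. Both arguments begin identically, by collecting terms to get $P_d(x,x)=\sum_{n=0}^{d}(n+1)x^n$ and recognizing this as the derivative of $\frac{x^{d+2}-1}{x-1}$. From there the paper invokes the Gauss--Lucas theorem: the roots of the derivative lie in the convex hull of the nontrivial $(d+2)$-nd roots of unity, and since $\frac{x^{d+2}-1}{x-1}$ is separable (coprime to its derivative), any such root lies strictly inside that polygon, hence has modulus strictly less than $1$, contradicting $|x|=1$. You instead pass to the closed form $\frac{(d+1)x^{d+2}-(d+2)x^{d+1}+1}{(x-1)^2}$ and run the equality case of the triangle inequality: on $|x|=1$ a zero forces $|(d+1)x^{d+2}+1|=d+2$, hence $x^{d+2}=1$, then $x^{d+1}=1$, and since $\gcd(d+1,d+2)=1$ this collapses to $x=1$, which you exclude separately via $P_d(1,1)=\binom{d+2}{2}\neq 0$. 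Your version is entirely elementary and self-contained (no appeal to Gauss--Lucas, and no need to first know that $x$ is a root of unity --- you rule out every point of the unit circle), and you correctly isolate the one delicate step, namely the degenerate equality case; the paper's version is shorter once Gauss--Lucas is granted, but its passage from ``coprime'' to ``strictly inside the convex hull'' quietly uses the same kind of boundary analysis you make explicit. Both proofs are sound.
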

\begin{proof}
Let $x$ is a $(d+1)$ or $(d+2)$ root of unity. We prove by contradiction that $P_d(x,x)$ is not equal to zero.
\begin{equation*}
   0=P_d(x,x)= \sum_{0\leq i+j \leq d} x^ {i+j}= \sum_{0\leq k \leq d}(k+1)x^{k}=  \left(\frac{d}{dx}\sum_{k=0}^{d} x^{k+1}\right) .
\end{equation*}
Therefore, $x$ is a root of $\frac{d}{dx} \left( \sum_{k=0}^{d}x^{k+1}  \right)$. The Gauss-Lucas theorem asserts that the zeroes of the derivative of a polynomial have to lie in the convex hull of the zeros of the polynomial itself. On the other side,
$$\sum_{k=0}^{d}x^{k+1}=\frac{x^{d+2}-1}{x-1}.$$
Since the two polynomials $\sum_{k=0}^{d}x^{k+1}$ and $P_d(x,x)$ are coprime to each other, $x$ is strictly inside the convex hull of $(d+2)$-roots of unity. Therefore, $|x|< 1$, 
which contradicts the fact that $x$ is a root of unity.
Hence, there is no symmetric pair $(x,x)$ in the set of toric points of $P_d$. 
\end{proof}
We are ready to prove \cref{th3.2}, which asserts that the set of toric pints of $P_d(x,y)$ is $U_{d+1} \cup U_{d+2}$;

\begin{proof}
From the two previous lemma, we know that the set of toric points of $P_d$ is included in $U_{d+1} \cup U_{d+2}$. To prove the revers we notice that for $(x,y)\in U_{d+1} \cup U_{d+2}$ we have $|x|=|y|=1$, so we just prove $P_d(x,y)=0$. To do so, we consider two cases:
\begin{itemize}
    \item Case 1) $(x,y) \in U_{d+1}=\{(x,y)\in  \mathds{C^*}^2\mid x^{d+1}=y^{d+1}=1,x\neq 1,y\neq 1\ , x \neq y\}$:
    \begin{align*}
       P_d(x,y)&=(1+x+\dots +x^d)+y(1+x+\dots +x^{d-1})+\dots +y^{d-1}(1+x)+y^d\\
       &= \frac{(x^{d+1}+ yx^{d}+\dots +y^{d-1}x^2+y^dx)-(1+y+\dots+y^d)}{x-1}.
   \end{align*} 
     Because $y$ is a $d+1$ root of unity, so $(1+y+\dots+y^d)$ is equal to zero. Also,  $0=1-1=x^{d+1}-y^{d+1}=(x-y)(x^d+x^dy+\dots +y^d)$, but $y\neq x$, so $(x^d+x^dy+\dots +y^d)=0$. Hence, $P_d(x,y)=0$. 
    
    \item Case 2) $(x,y) \in U_{d+2}=\{(x,y)\in \mathds{C^*}^2\mid x^{d+2}=y^{d+2}=1 ,x\neq 1,y\neq 1, x \neq y\}$:\\
$P_d(x,y)$, for $d\geq 1$ is symmetric, so we have:
    $$xP_d(x,y)+1+y+\dots+y^{d+1}=P_{d+1}(x,y)=P_{d+1}(y,x)=yP_d(x,y)+1+x+\dots+x^{d+1}.$$
    By subtracting $P_{d+1}(y,x)$ from $P_{d+1}(x,y)$, the following equation holds for  any toric point:
    \begin{equation}\label{e}
    (x-y)P_d(x,y)+ \frac{y^{d+2}-1}{y-1}-\frac{x^{d+2}-1}{x-1}=0.
    \end{equation}
    
  Also, $y^{d+2}-1=x^{d+2}-1=0$ and since $x \neq y$, so $P_d(x,y)=0$.
   \end{itemize}
    \end{proof}
    \begin{example}
  The following table shows the toric points of $P_2(x,y)$. 
  \begin{center}
 \begin{tabular}{ |p{2cm}||p{2cm}| }
  \hline
$(x,y) \in U_{3}$   &  $(x,y) \in U_{4}$   \\
 \hline
& $(e^{\frac{4\pi}{4}i}, e^{\frac{6\pi}{4}i})$   \\
$(e^{\frac{2\pi}{3}i},e^{\frac{4\pi}{3}i})$ &   $(e^{\frac{6\pi}{4}i},e^{\frac{4\pi}{4}i} )$  \\
&$(e^{\frac{2\pi}{4}i},e^{\frac{4\pi}{4}i})$\\
&$(e^{\frac{4\pi}{4}i},e^{\frac{2\pi}{4}i})$\\
$(e^{\frac{4\pi}{3}i},e^{\frac{2\pi}{3}i})$ &$(e^{\frac{2\pi}{4}i},e^{\frac{6\pi}{4}i}$)\\
&$(e^{\frac{6\pi}{4}i},e^{\frac{2\pi}{4}i})$\\
 \hline
\end{tabular}
  \end{center}
\end{example}
\subsection{Signs of slopes for \texorpdfstring{$P_d$}{}}
\label{Sign}
As we explained before, we need to compute $\epsilon$ at each toric point, which is the opposite of the sign of the imaginary part of $\gamma(x,y)=\frac{x\partial_x P}{y\partial_yP}$. The sign of the imaginary part of $\gamma(x,y)$ is denoted by $\sg(\IMM(\gamma(x,y)))$.

\begin{example}\label{ex 3.2} The following table shows $\sg(\IMM(\gamma(x,y)))$ at toric points of $P_2$.
\begin{center}
\begin{tabular}
{|p{2cm}|p{2.6cm}|p{2cm}||p{2cm}|p{2.6cm}|p{2cm}|}
  \hline
$(x,y) \in U_{3}$   & $\sg(\IMM(\gamma(x,y)))$& $\hspace{5mm} \epsilon(x,y)$& $(x,y) \in U_{4}$ & $\sg(\IMM(\gamma(x,y)))$ & $\hspace{5mm}\epsilon(x,y)$\\
 \hline
& & &$(e^{\frac{4\pi}{4}i}, e^{\frac{6\pi}{4}i})$ & $\hspace{1cm} -$ & $\hspace{8mm} +$ \\
$(e^{\frac{2\pi}{3}i},e^{\frac{4\pi}{3}i})$ & $\hspace{1cm} +$ & $\hspace{8mm} -$& $(e^{\frac{6\pi}{4}i},e^{\frac{4\pi}{4}i} )$&$\hspace{1cm} +$ &$\hspace{8mm} -$ \\
& & &$(e^{\frac{2\pi}{4}i},e^{\frac{4\pi}{4}i})$& $\hspace{1cm} -$& $\hspace{8mm} +$\\
& & &$(e^{\frac{4\pi}{4}i},e^{\frac{2\pi}{4}i})$&$\hspace{1cm} +$&$\hspace{8mm} -$\\
$(e^{\frac{4\pi}{3}i},e^{\frac{2\pi}{3}i})$ &$\hspace{1cm} -$&$\hspace{8mm} +$ &$(e^{\frac{2\pi}{4}i},e^{\frac{6\pi}{4}i}$)&$\hspace{1cm} -$&$\hspace{8mm} +$ \\
& & &$(e^{\frac{6\pi}{4}i},e^{\frac{2\pi}{4}i})$& $\hspace{1cm} +$&$\hspace{8mm} -$\\
 \hline
\end{tabular}
\end{center}
   \end{example}
 To generalize the above table for any $P_d$ we define $\Omega$, which associates each toric point with a point in ${\mathds{R}}^2$. The map is defined by 
   $\Omega:(x_i,y_i) \mapsto(l_i,k_i)$,  where  $(x_i,y_i)=(\omega^{l_i},\omega^{k_i})$, and
$\omega= e^{\frac{2 \pi }{d+1}i}$ if $(x_i,y_i) \in U_{d+1}$, or $\omega= e^{\frac{2 \pi }{d+2}i}$ if $(x_i,y_i) \in U_{d+2}$. We say $\Omega:(x,y) \mapsto(l,k)$ is above the diagonal if $l<k$ and below the diagonal if $k<l$. Note that by \cref{th 1.2}, $l \neq k$;
 Now, we prove the following proposition for the sign of slopes.
\begin{proposition}\label{th4.1}
Let $d\geq 1$, for the polynomial $P_d(x,y)$,  $\sg(\IMM(\gamma(x,y)))$ at each toric point is determined as follows;
\begin{itemize}
    \item For $(x,y)\in U_{d+1}$:
     \begin{itemize}
        \item If $\Omega(x,y)$ is above the diagonal, the sign is positive, so $\epsilon(x,y)<0$.
        \item If $\Omega(x,y)$ is below the diagonal, the sign is  negative, so $\epsilon(x,y)>0$.
        \end{itemize}
    \item For $(x,y)\in U_{d+2}$:
     \begin{itemize}
        \item If $\Omega(x,y)$ is above the diagonal, the sign is negative, so $\epsilon(x,y)>0$.
        \item If $\Omega(x,y)$ is below the diagonal, the sign is positive, so $\epsilon(x,y)<0$.    
    \end{itemize}
\end{itemize}
\end{proposition}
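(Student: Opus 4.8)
The plan is to evaluate $x\partial_xP_d$ and $y\partial_yP_d$ in closed form at an arbitrary toric point, form the slope $\gamma(x,y)=\frac{x\partial_xP_d}{y\partial_yP_d}$, reduce it to a one-line trigonometric expression separately on $U_{d+1}$ and on $U_{d+2}$, and then read off the sign of its imaginary part. First I would record the algebraic identity obtained by grouping $P_d$ according to powers of $y$, namely $P_d(x,y)=\sum_{j=0}^{d}y^{j}\,\frac{x^{d+1-j}-1}{x-1}$, which after clearing denominators becomes the polynomial identity
$$(x-1)\,P_d(x,y)=x\,\frac{x^{d+1}-y^{d+1}}{x-y}-\frac{y^{d+1}-1}{y-1}$$
(this is essentially the rewriting already used, for points of $U_{d+1}$, in the proof of \cref{th3.2}). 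Applying $x\partial_x$ to both sides, noting that on the left $x\partial_x\bigl((x-1)P_d\bigr)=xP_d+(x-1)\,x\partial_xP_d$ while $P_d(x,y)=0$ at a toric point, that on the right the second term is constant in $x$, and that $x\neq1$, $x\neq y$ at a toric point (the latter by \cref{th 1.2}), I obtain
$$x\partial_xP_d(x,y)=\frac{N(x,y)}{(x-1)(x-y)^{2}},\qquad N(x,y):=(d+1)x^{d+3}-(d+2)x^{d+2}y+xy^{d+2}.$$

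Next, since $P_d$ is symmetric, $y\partial_yP_d(x,y)=(x\partial_xP_d)(y,x)=\dfrac{N(y,x)}{(y-1)(x-y)^{2}}$, so that
$$\gamma(x,y)=\frac{x\partial_xP_d}{y\partial_yP_d}=\frac{(y-1)\,N(x,y)}{(x-1)\,N(y,x)}.$$
I would then specialize. On $U_{d+1}$, substituting $x^{d+1}=y^{d+1}=1$ (so $x^{d+3}=x^{2}$, $x^{d+2}=x$, $y^{d+2}=y$) collapses $N(x,y)$ to $(d+1)\,x\,(x-y)$, after which the cancellations give $\gamma(x,y)=-\dfrac{x(1-y)}{y(1-x)}$. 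On $U_{d+2}$, substituting $x^{d+2}=y^{d+2}=1$ (so $x^{d+3}=x$, $x^{d+2}=1$, $y^{d+2}=1$) collapses $N(x,y)$ to $(d+2)(x-y)$, giving $\gamma(x,y)=-\dfrac{1-y}{1-x}$. In both cases $N(y,x)\neq0$, so $\gamma$ is a well-defined finite number; incidentally this re-proves that $P_d$ is regular.

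Finally I would pass to trigonometric form: write $x=e^{i\alpha}$, $y=e^{i\beta}$ with $\alpha,\beta\in(0,2\pi)$ (admissible since $x,y\neq1$), where $(\alpha,\beta)=\bigl(\tfrac{2\pi l}{d+1},\tfrac{2\pi k}{d+1}\bigr)$ on $U_{d+1}$ and $(\alpha,\beta)=\bigl(\tfrac{2\pi l}{d+2},\tfrac{2\pi k}{d+2}\bigr)$ on $U_{d+2}$. From $1-e^{i\theta}=-2i\sin(\theta/2)\,e^{i\theta/2}$ one gets $\frac{1-y}{1-x}=\frac{\sin(\beta/2)}{\sin(\alpha/2)}\,e^{i(\beta-\alpha)/2}$, and together with $x/y=e^{i(\alpha-\beta)}$ this yields
$$\IMM(\gamma(x,y))=-\frac{\sin(\beta/2)}{\sin(\alpha/2)}\sin\bigl(\tfrac{\alpha-\beta}{2}\bigr)\ \text{ on }U_{d+1},\qquad \IMM(\gamma(x,y))=-\frac{\sin(\beta/2)}{\sin(\alpha/2)}\sin\bigl(\tfrac{\beta-\alpha}{2}\bigr)\ \text{ on }U_{d+2}.$$
Since $\alpha/2,\beta/2\in(0,\pi)$, the prefactor $\frac{\sin(\beta/2)}{\sin(\alpha/2)}$ is strictly positive, and since $\frac{\alpha-\beta}{2}\in(-\pi,\pi)$ with $l\neq k$ (by \cref{th 1.2}) we have $\sg\bigl(\sin\tfrac{\alpha-\beta}{2}\bigr)=\sg(\alpha-\beta)=\sg(l-k)\neq0$. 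Hence on $U_{d+1}$, $\sg(\IMM\gamma)=\sg(k-l)$, i.e.\ positive above the diagonal ($l<k$) and negative below it ($l>k$); and on $U_{d+2}$, $\sg(\IMM\gamma)=\sg(l-k)$, i.e.\ negative above the diagonal and positive below it. Since $\epsilon(x,y)=-\sg(\IMM(\gamma(x,y)))$, the stated signs of $\epsilon$ follow at once, which is exactly the assertion.

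I expect the main obstacle to be purely a matter of careful bookkeeping in the second step: one must perform the substitutions $x^{d+3}=x^{2}$, $x^{d+2}=x$, $y^{d+2}=y$ (respectively $x^{d+3}=x$, $x^{d+2}=1$, $y^{d+2}=1$) in the three monomials of $N(x,y)$ and of $N(y,x)$ and check that they really combine into a single scalar multiple of $x-y$, after which the reduction of $\gamma$ to $-\frac{x(1-y)}{y(1-x)}$ and to $-\frac{1-y}{1-x}$ is immediate. The differentiation identity of the first step, the symmetry reduction, and the trigonometric sign analysis of the last step are all routine.
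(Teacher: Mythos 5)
Your proof is correct, and its skeleton is the same as the paper's: reduce $\gamma$ at toric points to the closed forms $-\frac{x(1-y)}{y(1-x)}$ on $U_{d+1}$ and $-\frac{1-y}{1-x}$ on $U_{d+2}$, then determine the sign of the imaginary part. Both steps are executed somewhat differently. For the closed forms, the paper differentiates the symmetric identity $P_d(x,y)\,(x-1)(y-1)(x-y)=(x^{d+2}-1)(y-1)-(y^{d+2}-1)(x-1)$ with respect to $x$ and to $y$, takes the ratio, and uses $P_d=0$ to discard the terms containing $P_d$; you differentiate the one-variable identity $(x-1)P_d=x\frac{x^{d+1}-y^{d+1}}{x-y}-\frac{y^{d+1}-1}{y-1}$ and invoke the symmetry of $P_d$ for the $y$-derivative. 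Both are valid; your route costs the extra bookkeeping with $N(x,y)$ and the check that it collapses to $(d+1)x(x-y)$, resp.\ $(d+2)(x-y)$ (it does), whereas the paper's single symmetric identity makes both specializations one-line substitutions. For the sign, the paper argues geometrically via the inscribed-angle theorem, writing $\frac{1-y}{1-x}=re^{i\theta}$ with $\theta$ half the central angle between $x$ and $y$, while you use the factorization $1-e^{i\theta}=-2i\sin(\theta/2)e^{i\theta/2}$; these encode the same fact, but your algebraic version yields $\IMM(\gamma)$ exactly rather than only its sign, and it makes the regularity of $P_d$ (i.e.\ $\IMM(\gamma)\neq 0$, since $l\neq k$ forces $\sin\frac{\alpha-\beta}{2}\neq 0$) immediate, which the paper obtains as a separate remark after the proposition.
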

\begin{proof}
We find $\sg(\IMM(\gamma(x,y)))$. Recall that $\epsilon(x,y)$ is its opposite!
As we saw in the proof of \cref{th3.2}, at a toric point $(x,y)$ \cref{e} is satisfied:
$$0=(x-y)P_d(x,y)+\frac{y^{d+2}-1}{y-1}-\frac{x^{d+2}-1}{x-1}.$$
Let $Q(x,y)=(x-1)(y-1)(x-y)$. For all $(x,y) \in  {\mathds{C}}^2$ we have this equality of polynomials:
$$P_d (x,y)Q(x,y) =(x^{d+2}-1)(y-1)-(y^{d+2}-1)(x-1).$$
We apply $\partial_x$ and $\partial_y$ to the both sides of the above equality:
\begin{equation} \label{eq 2.1}
    \partial_xP_d (x,y)Q(x,y)+\partial_xQ(x,y)P_d (x,y)=(d+2)(y-1)x^{d+1}-(y^{d+2}-1),
\end{equation}
\begin{equation} \label{eq 2.2}
    \partial_yP_d (x,y)Q(x,y)+\partial_yQ(x,y)P_d (x,y)=(x^{d+2}-1)-(d+2)(x-1)y^{d+1}.
\end{equation}
We divide \cref{eq 2.1} by \cref{eq 2.2}, so for all the $(x,y) \in  {\mathds{C}}^2$ we have:

\begin{equation} \label{eq 2.3}
    \frac{\partial_xP_d (x,y)Q(x,y)+\partial_xQ(x,y)P_d (x,y)}{ \partial_yP_d (x,y)Q(x,y)+\partial_yQ(x,y)P_d (x,y)}=\frac{(d+2)(y-1)x^{d+1}-(y^{d+2}-1)}{(x^{d+2}-1)-(d+2)(x-1)y^{d+1}}.
\end{equation}
We evaluate the previous equation at toric points and we consider two cases:
\begin{itemize}
    \item Case 1) $(x,y)\in U_{d+1}$:
$$\frac{\partial_xP_d (x,y)}{\partial_yP_d (x,y)}=-\frac{y-1}{x-1}\ , \ \text{so}\ \  \frac{x\partial_xP_d (x,y)}{y\partial_yP_d (x,y)}=\frac{-x(1-y)}{y(1-x)}.$$
  
    \item  Case 2) $(x,y) \in U_{d+2}$:
    $$ \frac{\partial_xP_d (x,y)}{\partial_yP_d (x,y)}=-\frac{x^{d+1}(y-1)}{y^{d+1}(x-1)}\  ,\ \text{so}\ \  \frac{x\partial_xP_d (x,y)}{y\partial_yP_d (x,y)}=-\frac{1-y}{1-x}.$$ 
       \end{itemize}
To compute $Sgn(\IMM(\gamma (x,y)))$ at toric points, we consider both $x$ and $y$ as a suitable power of the associated first primitive root of unity. 
     \begin{figure}[!htb]
   \begin{minipage}{0.48\textwidth}
     \centering
     \includegraphics[width=0.6\linewidth]{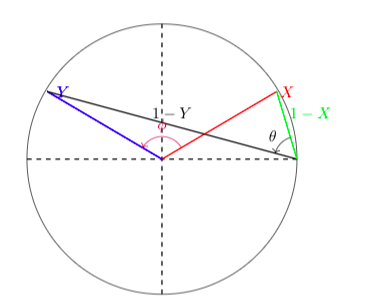}
     \caption{}\label{fig 1teta}
   \end{minipage}\hfill
   \begin{minipage}{0.48\textwidth}
     \centering
     \includegraphics[width=.6\linewidth]{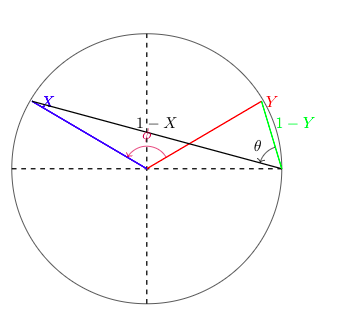}
     \caption{}\label{fig 2teta}
   \end{minipage}
\end{figure}
     \begin{itemize}
    \item Case 1) $(x,y)\in U_{d+1}$:
    Let $\omega=e^{\frac{2\pi}{d+1}i}$. There are some $0 < a \leq d$ and $0 < b \leq d$, with $x=\omega^a, y=\omega^b$ and $a \neq b$. We consider $\Omega(x,y)$ and we have two possible cases:
    
   \begin{enumerate}
       \item If $\Omega(x,y)$ is above the diagonal, or equivalently $b>a$ (see \cref{fig 1teta}), we have: 
   \begin{equation} \label{eq 2.4}
        \gamma(x,y)=\frac{x\partial_xP_d (x,y)}{y\partial_yP_d (x,y)}= \frac{-x}{y} \ \frac{1-y}{1-x} = -e^{-i\phi}r e^{i\theta}.
    \end{equation}
    In the last equality in \cref{eq 2.4}, we used the suitable polar representations according to \cref{fig 1teta}, where $\frac{x}{y}=e^{-i\phi}$, with $0 < \phi <2\pi $ and $\frac{1-y}{1-x}=r e^{i\theta}$, with $r>0$, $0 < \theta < \pi$. We notice that $\phi$ and $\theta$ are respectively central and inscribed angles with the same intercepted arc in the circle, so $\phi = 2 \theta$. Therefore, we have:
     $$\sg(\IMM( \frac{x\partial_xP_d (x,y)}{y\partial_yP_d (x,y)}))= - \sg(\IMM(re^{-i\phi/2}))= \sin(\frac{\phi}{2}),$$
     since $0 < \phi < 2\pi $, $\sg(\IMM(\gamma (x,y)))$ is positive.

  \item If $\Omega(x,y)$ is below the diagonal, or equivalently $a>b$ (see \cref{fig 2teta}), we have:
  $$\sg(\IMM( \frac{x\partial_xP_d (x,y)}{y\partial_yP_d (x,y)}))= - \sg(\IMM(e^{i\phi/2}))= -\sin(\frac{\phi}{2}),$$
  so, $\sg(\IMM(\gamma (x,y)))$ is negative.
  \end{enumerate}
    \vspace{.3cm}
    \item Case 2) $(x,y)\in U_{d+2}$: Let $\omega=e^{\frac{2\pi}{d+2}i}$ and there are some $0 < a \leq d+1$ and $0 < b \leq d+1$ such that, $x=\omega^a, y=\omega^b$ and $a \neq b$. Again we check the two possible cases:
    \begin{enumerate}
    \vspace{.3cm}
       \item If $\Omega(x,y)$ is above the diagonal, or equivalently $b>a$ (see \cref{fig 1teta}), we have:
 
    $$\sg(\IMM(\frac{x\partial_xP_d (x,y)}{y\partial_yP_d (x,y)}))=\sg(\IMM(-\frac{1-y}{1-x}))= \sg(\IMM(-re^{i \theta}))= -\sin(\theta)=-\sin(\frac{\phi}{2}),$$
           
     since $0 <\phi< 2\pi$, $\sg(\IMM(\gamma(x,y)))$ is negative.
       
       \item If $\Omega(x,y)$ is below the diagonal, or equivalently $a>b$ (see \cref{fig 2teta}), we have:
    $$\sg(\IMM(\frac{x\partial_xP_d (x,y)}{y\partial_yP_d (x,y)}))=\sg(\IMM(-\frac{1-y}{1-x}))= \sg(\IMM(-re^{-i \theta}))= -\sin(-\theta)=\sin(\frac{\phi}{2}),$$
      
       thus $\sg(\IMM(\gamma(x,y)))$ is positive.
       
     \end{enumerate}
   \end{itemize}
   
\end{proof}
An immediate result from the previous proposition is that $P_d$ is regular, for $d \geq 1$, since for a toric point $(x,y)\in U_{d+1}\cup U_{d+2}$ we have:
\begin{align*}
     \sg(\IMM(\frac{x\partial_xP_d (x,y)}{y\partial_yP_d (x,y)})) \in \mathbb{R} \Leftrightarrow \pm \sin(\frac{\phi}{2})= 0 \Leftrightarrow \frac{\phi}{2}=2k\pi.
    \end{align*}
    As we consider $0 < \phi < 2\pi$, so it is impossible. 
 \subsection{Computing the Mahler measures of $P_1$ and $P_2$}
 \label{Mahler}
We may use the formula for Mahler measures to compute $m(P_d)$, for arbitrary values of $d$. Let us do it explicitly for $d=1,2$. We write thus $m(P_1)$ and $m(P_2)$ as a finite sums of the values of $D$, at specific roots of unity. The case of $P_1$ was first computed by Smyth  \cite{smyth_1981}.
\begin{observation}
We have $m(P_1)=\frac{1}{\pi}D(e^{\frac{\pi}{3}i})$, which is approximately $0.32$. 
\end{observation}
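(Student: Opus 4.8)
The plan is to specialize the Guilloux--Marché formula \cref{Antonin formula} to $P_1$, since every ingredient it needs has already been computed in the preceding sections. First I would enumerate the toric points. Applying \cref{th3.2} with $d=1$, the toric set of $P_1$ is $U_2\cup U_3$. Here $U_2$ is empty: $x^2=1$ together with $x\neq 1$ forces $x=-1$, and likewise $y=-1$, contradicting the constraint $x\neq y$. Writing $\omega=e^{2\pi i/3}$, the set $U_3$ therefore consists of exactly the two points $(\omega,\omega^2)$ and $(\omega^2,\omega)$, as these are the only pairs of distinct non-trivial cube roots of unity.

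Next I would collect the volume function and the signs. By \cref{ex 2.1} (recovered in \cref{th 2.2}) a volume function for $P_1$ is $V(x,y)=-D(-x)$. For $\epsilon$ one can either invoke \cref{th4.1} in the case $U_{d+2}=U_3$, or compute by hand: since $P_1=x+y+1$ is linear, $\gamma(x,y)=\frac{x\partial_xP_1}{y\partial_yP_1}=x/y$, so at $(\omega,\omega^2)$ one gets $\gamma=\omega^{2}=e^{-2\pi i/3}$, whose imaginary part is negative, hence $\epsilon(\omega,\omega^2)=+1$; at $(\omega^2,\omega)$ one gets $\gamma=\omega=e^{2\pi i/3}$, whose imaginary part is positive, hence $\epsilon(\omega^2,\omega)=-1$. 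This is consistent with \cref{th4.1}, since $(\omega,\omega^2)$ lies above the diagonal and $(\omega^2,\omega)$ below it.

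Then I would assemble the sum. Using the reductions $-\omega=e^{-i\pi/3}$ and $-\omega^2=e^{i\pi/3}$ together with $D(\bar z)=-D(z)$, one gets $V(\omega,\omega^2)=-D(e^{-i\pi/3})=D(e^{i\pi/3})$ and $V(\omega^2,\omega)=-D(e^{i\pi/3})$. Substituting into \cref{Antonin formula},
$$m(P_1)=\frac{1}{2\pi}\bigl[\epsilon(\omega,\omega^2)\,V(\omega,\omega^2)+\epsilon(\omega^2,\omega)\,V(\omega^2,\omega)\bigr]=\frac{1}{2\pi}\bigl[D(e^{i\pi/3})+D(e^{i\pi/3})\bigr]=\frac{1}{\pi}D(e^{i\pi/3}).$$
Finally, from $D(e^{i\theta})=\sum_{n\geq 1}\sin(n\theta)/n^2$ one computes $D(e^{i\pi/3})\approx 1.0149$, so $m(P_1)\approx 0.32$.

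I do not expect any real obstacle: the statement is a direct specialization of the machinery of the preceding sections. The only points demanding care are the bookkeeping ones --- verifying that $U_2=\emptyset$, keeping straight the two sign conventions (the sign of $\IMM(\gamma)$ versus its opposite $\epsilon$), and correctly rewriting $-\omega$ and $-\omega^2$ as sixth roots of unity before applying the reflection property of $D$.
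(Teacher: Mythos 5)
Your proposal is correct and follows essentially the same route as the paper: enumerate the toric points via \cref{th3.2} (the paper likewise reduces to $U_3$), take $V(x,y)=-D(-x)$ from \cref{th 2.2}, determine the signs $\epsilon$, and evaluate the two-term sum using $-\omega=e^{-i\pi/3}$, $-\omega^2=e^{i\pi/3}$ and $D(\bar z)=-D(z)$ to get $2\pi m(P_1)=2D(e^{i\pi/3})$. Your direct verification of $\epsilon$ from $\gamma=x/y$ (rather than only reading it off \cref{th4.1}) and your explicit check that $U_2=\emptyset$ are minor, harmless additions.
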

By using \cref{th3.2} the set of the toric points of $P_1$ is $U_2 \cup U_3= U_3$. \cref{th4.1} gives the following values for $\epsilon(x,y)$:
\begin{center}
 \begin{tabular}{ |p{2cm}||p{2cm}| }
  \hline
$(x,y) \in U_{3}$   &  $\hspace{4mm}\epsilon(x,y)$   \\
 \hline
$(e^{\frac{2\pi}{3}i},e^{\frac{4\pi}{3}i})$ & \hspace{8mm}$+$  \\
$(e^{\frac{4\pi}{3}i},e^{\frac{2\pi}{3}i})$ & \hspace{8mm}$-$ \\
 \hline
\end{tabular}
  \end{center}
According to \cref{th 2.2}, $V(x,y)=-D(-x)$ is a volume function for $P_1$. We notice that for $\omega$ on the unite circle we have $D(\bar{\omega})=-D(\omega)$, $D(-e^{i\phi })=D(e^{i(\pi+\phi) })$, and $D(\omega e^{i2\pi })=D(\omega)$. Hence, we have:
\begin{align*}
    2\pi m(P_1)&= \sum_{(x,y) \in U_3} \epsilon(x,y)V(x,y)=\epsilon(e^{\frac{i2\pi}{3}},e^{i\frac{4\pi}{3}})(-D(-e^{i\frac{2\pi}{3}}))+ \epsilon(e^{i\frac{4\pi}{3}},e^{i\frac{2\pi}{3}})(-D(-e^{i\frac{4\pi}{3}}))\\
    &=(-D(-e^{i\frac{2\pi}{3}}))-(-D(-e^{i\frac{4\pi}{3}}))=-D(e^{i(\pi+\frac{2\pi}{3})})+D(e^{i(\pi+\frac{4\pi}{3})})=-D(e^{i\frac{5\pi}{3}})+D(e^{i\frac{7\pi}{3}})\\
    &= D(\overline{e^{i\frac{5\pi}{3}}})+D(e^{i\frac{\pi}{3}})=D(e^{i\frac{\pi}{3}})+D(e^{i\frac{\pi}{3}})=2D(e^{i\frac{\pi}{3}}).
\end{align*}
Therefore, $m(P_1)=\frac{1}{\pi} D(e^{i\frac{\pi}{3}})$.

\begin{observation}
We have $m(P_2)=\frac{1}{2 \pi}\big(\frac{3}{2}D(e^{i\frac{4\pi}{3}})+4D(e^{i\frac{\pi}{2}})\big)$, which is approximately $0.421$.
\end{observation}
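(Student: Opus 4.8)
The plan is to specialize the Guilloux--March\'e formula \eqref{Antonin formula} to $d=2$. By \cref{th3.2} the toric points of $P_2$ are the eight points of $U_3\cup U_4$, and by \cref{th4.1} (equivalently, the table in \cref{ex 3.2}) the sign $\epsilon(x,y)$ is already known at each of them. The volume function is the one of \cref{th2.2} with $d=2$,
\[
V(x,y)=\tfrac{1}{12}\bigl[D(y^{3})-D(x^{3})-D((y/x)^{3})\bigr]+\tfrac{1}{4}\bigl[D(x)-D(y)-D(x/y)\bigr],
\]
so the whole proof reduces to evaluating $\sum_{(x,y)}\epsilon(x,y)V(x,y)$ and recognising the result; throughout I would use the properties of $D$ recalled above, namely $D(\bar z)=-D(z)$, invariance of $D(e^{i\theta})$ under $\theta\mapsto\theta+2\pi$, and $D(1)=D(-1)=0$.

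First I would handle the two points of $U_3$. On $U_3$ one has $x^{3}=y^{3}=1$, hence also $(y/x)^{3}=1$, so the first bracket of $V$ vanishes and $V(x,y)=\tfrac14[D(x)-D(y)-D(x/y)]$. For $(x,y)=(e^{2\pi i/3},e^{4\pi i/3})$ one checks $y=\bar x$ and $x/y=\bar x$, so by $D(\bar z)=-D(z)$ the three summands coincide and $V=\tfrac34 D(e^{2\pi i/3})$; the conjugate point $(e^{4\pi i/3},e^{2\pi i/3})$ gives $V=-\tfrac34 D(e^{2\pi i/3})$ in the same way. Weighting by the signs $\epsilon=-1$ and $\epsilon=+1$ from \cref{th4.1}, the contribution of $U_3$ to $\sum\epsilon V$ is $-\tfrac32 D(e^{2\pi i/3})=\tfrac32 D(e^{i4\pi/3})$.

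Next I would handle the six points of $U_4$. On $U_4$ one has $x^{4}=y^{4}=1$, so $x^{3}=\bar x$, $y^{3}=\bar y$ and $(y/x)^{3}=\overline{y/x}=x/y$; hence $D(y^{3})-D(x^{3})-D((y/x)^{3})=D(x)-D(y)-D(x/y)$ and therefore $V(x,y)=\tfrac13[D(x)-D(y)-D(x/y)]$ on all of $U_4$. Here $x,y$ range over $\{i,-1,-i\}$ with $x\ne y$; using $D(-1)=0$ and $D(-i)=-D(i)$, each of the six quantities $D(x)-D(y)-D(x/y)$ equals $\pm 2D(i)$, and after multiplying by the matching $\epsilon(x,y)$ from \cref{th4.1} every one of the six contributions equals $+\tfrac23 D(i)=\tfrac23 D(e^{i\pi/2})$. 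Summing gives $4D(e^{i\pi/2})$ for the $U_4$ part. Adding the two parts and dividing by $2\pi$ yields
\[
m(P_2)=\frac{1}{2\pi}\Bigl(\tfrac32 D(e^{i4\pi/3})+4D(e^{i\pi/2})\Bigr),
\]
and the numerical value $\approx 0.421$ follows from $D(e^{i\pi/2})$ being Catalan's constant and $D(e^{i4\pi/3})=-\tfrac23 D(e^{i\pi/3})$ (the latter from the distribution relation $D(z^2)=2D(z)+2D(-z)$ applied at $z=e^{i\pi/3}$).

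There is essentially no obstacle here: the argument is a finite, mechanical evaluation. The only thing requiring care is the bookkeeping --- reducing every argument of $D$ modulo $2\pi$, applying $D(\bar z)=-D(z)$ and $D(\pm1)=0$ in the right places, and pairing each volume value with the correct sign $\epsilon(x,y)$ read off from \cref{th4.1}.
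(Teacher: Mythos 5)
Your proposal is correct and follows essentially the same route as the paper: specialize the formula \eqref{Antonin formula} to $d=2$, note that $V$ reduces to $\tfrac14[D(x)-D(y)-D(x/y)]$ on $U_3$ and $\tfrac13[D(x)-D(y)-D(x/y)]$ on $U_4$, and evaluate the signed sum using $D(\bar z)=-D(z)$ and $D(\pm1)=0$. The only (immaterial) differences are that the paper halves the $U_4$ sum via the antisymmetry $\epsilon(x,y)=-\epsilon(y,x)$, $V(x,y)=-V(y,x)$ before evaluating, whereas you evaluate all six terms directly, and you additionally justify the numerical value via the distribution relation for $D$.
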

Notice that $U_3 \cup U_4$ is the set of toric points of $P_2$. We have $\epsilon$ at each toric point by looking at the table in \cref{ex 3.2}.  According to \cref{th2.2}, a volume function is
$V(x,y)=\frac{1}{12} (D(y^3)-D(x^3)-D(y/x)^3)+\frac{1}{4} (D(x)-D(y)-D(x/y))$. The value of the volume function at $(x,y)$ in $U_3$ is equal to $\frac{1}{4}(D(x)-D(y)-D(x/y))$ and, for $(x,y)$ in $U_4$ is equal to $\frac{1}{3}(D(x)-D(y)-D(x/y))$. Hence, we have:
\begin{align*}
 2\pi m(P_2) &= \frac{1}{4}\sum_{(x,y) \in U_3} \epsilon(x,y)(D(x)-D(y)-D(x/y))+\frac{1}{3}\sum_{(x,y) \in U_4} \epsilon(x,y)(D(x)-D(y)-D(x/y)).
\end{align*}
For any $(x,y) \in U_3$ we have $(x,y)=(x,\bar{x})$. In addition, $x$ and $y$ are third roots of unity, so that $D(x/y)=D(y)$, and thus we have $D(x)-D(y)-D(x/y)=3D(x)$. Hence, the first summation in $m(P_2)$ is $\frac{3}{4}\sum_{(x,y) \in U_3} \epsilon(x,y) D(x)$, which is $\frac{3}{4}(D(e^{i\frac{4\pi}{3}})-D({e^{i\frac{2\pi}{3}}}))=\frac{3}{2}D(e^{i\frac{4\pi}{3}})$. We now, look at the second summation in $m(P_2)$. From \cref{ex 3.2}, for any $(x,y) \in U_4$ we have $(y,x) \in U_4$, and $\epsilon(x,y)=-\epsilon(y,x)$. Therefore, we can rewrite the second summation as:
\begin{align*}
    \frac{1}{3}\sum_{(x,y) \in U_4} \epsilon(x,y)V(x,y)=\frac{2}{3}\sum_{(x,y) \in U_4,\  \epsilon(x,y)>0 } V(x,y)= \frac{2}{3}\sum_{(x,y) \in U_4,\  \epsilon(x,y)>0 } (D(x)-D(y)-D(x/y)).
\end{align*}
The points $(x,y) \in U_4$ with $\epsilon(x,y)>0$ are the points  $(e^{i\frac{\pi}{2}},e^{i\pi})$, $(e^{i\pi},e^{i\frac{3\pi}{2}})$ and $(e^{i\frac{\pi}{2}},e^{i\frac{3\pi}{2}})$. To simplify the calculation, $e^{i\frac{\pi}{2}}$ , $e^{i\pi}$, and $e^{i\frac{3\pi}{2}}$ are respectively denoted by $a,b$ and $c$ and we have:
\begin{align*}
    &\frac{2}{3}\sum_{(x,y) \in U_4,\  \epsilon(x,y)>0 } (D(x)-D(y)-D(x/y))\\
    &=\frac{2}{3}\big(D(a)-D(b)-D(a/b)+D(b)-D(c)-D(b/c)+D(a)-D(c)-D(a/c)\big)\\
    &=\frac{2}{3}\big(2D(a)-2D(c)-D(a/b)-D(b/c)-D(a/c)\big)\\
    &=\frac{2}{3}\big(2D(e^i\frac{\pi }{2})-2D(e^{i\frac{3\pi}{2}})-D(e^{-i\frac{\pi}{2}})-D(e^{-i\frac{\pi}{2}})-D(e^{-i\pi}) \big)\\
    &=\frac{2}{3}\big(2D(e^{i\frac{\pi}{2}})-2D(e^{i\frac{3\pi}{2}}) +2D(e^{i\frac{\pi}{2}})\big)=\frac{2}{3}\big(6D(e^{i\frac{\pi}{2}})\big)=4D(e^{i\frac{\pi}{2}})).
    \end{align*}
  Note that $D(e^{-i\frac{\pi}{2}})=-D(e^{i\frac{\pi}{2}})$, $D(e^{i\frac{3\pi}{2}})=-D(e^{i\frac{\pi}{2}})$, and $D(e^{-i \pi })=0$ . In the end, we find the evaluation of $m(P_2)$ as follows:
  $$ m(P_2)=\frac{1}{2 \pi}\big(\frac{3}{2}D(e^{i\frac{4\pi}{3}})+4D(e^{i\frac{\pi}{2}})\big).$$
  \section{Experimental computations}
  The previous computation may be automated. We get an algorithm to compute the Mahler measure of any $P_d$, as a combination of dilogarithm at roots of unity. This can be computed with arbitrary precision in a very efficient way.
 For $1\leq d \leq 1000$ the graph of $m(P_d)$, implemented in SageMath, is shown in \cref{fig: MM}.\\
\begin{figure}[!htpb]
    \centering
    \includegraphics[width=.6 \textwidth]{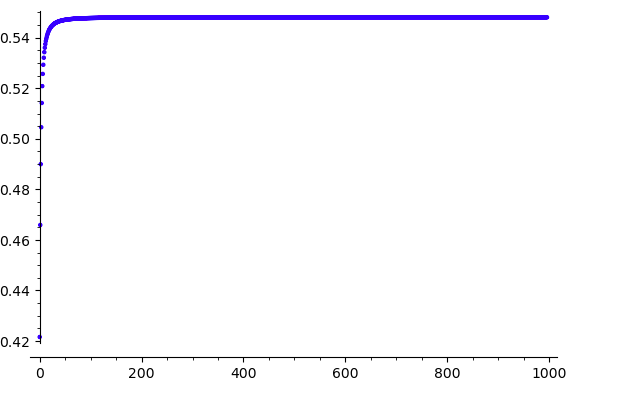}
    \caption{The graph of $m(P_d)$, for $1\leq d \leq 1000.$}
    \label{fig: MM}
\end{figure}

The figure hints to the existence of a limit for $m(P_d)$. We study more about the volume function to prove the existence  of the limit. 
\section{Volume function at toric points}\label{6}
\subsection{Simplification of volume function at toric points}
In the formula for $m(P_d)$, we may simplify the computation of the volume function at toric points. Indeed, the values of this function at $(x,y)\in U_{d+1}$ may be rewritten as follows:
 \begin{align*}
    V(x,y)= & \frac{1}{(d+1)(d+2)}[D(y^{d+1})-D(x^{d+1})-D((y/x)^{d+1})] +\frac{1}{(d+2)}[D(x)-D(y)-D(x/y)]\\
    = & \frac{1}{(d+2)}[D(x)-D(y)-D(x/y)].
\end{align*} 
Likewise, at a point $(x,y)\in U_{d+2}$ we have:
\begin{align*}
    V(x,y)= &
    \frac{1}{(d+1)}[D(x)-D(y)-D(x/y)].
\end{align*}
In the rest of this section, by simplifying the volume function at toric points we introduce a new function, called $\vol$ . 

\begin{definition}\label{pro 5.1}
The function
 $\vol:[0,2\pi]\times [0,2\pi]\mapsto \mathds{R}$, defined by
 $\vol(\theta,\alpha):=D(e^{ i \theta })-D(e^{i(\theta+\alpha)})+D(e^{i\alpha})$ has the following properties:
\begin{enumerate}
    \item For $(x,y)\in U_{d+1}$, with $x=e^{\frac{2k\pi i}{d+1}}$, $y=e^{\frac{2k'\pi i}{d+1}}$, where $0 <k<k'< d+1$ we have:
    \normalfont
     \begin{align*}
      V(e^{\frac{2k\pi i}{d+1}}, e^{\frac{2k'\pi i}{d+1}})
      = \frac{1}{d+2}\vol\left(\frac{2k\pi}{d+1},\frac{2(k'-k)\pi}{d+1}\right)=-V(e^{\frac{2k'\pi i}{d+1}}, e^{\frac{2k\pi i}{d+1}}).
   \end{align*}
   \itshape
 \item For $(x,y) \in U_{d+2}$, with $x=e^{\frac{2k\pi i}{d+2}}$ and $y=e^{\frac{2k'\pi i}{d+2}}$, where $0 <k<k'< d+2$ we have:
 \normalfont
 \begin{align*}
     V(e^{\frac{2k\pi i}{d+2}}, e^{\frac{2k'\pi i}{d+2}})=
     \frac{1}{d+1} \vol\left(\frac{2k\pi}{d+2},\frac{2(k'-k)\pi}{d+2}\right)= -V(e^{\frac{2k'\pi i}{d+2}}, e^{\frac{2k\pi i}{d+2}}).
 \end{align*}
\end{enumerate}
\end{definition} 
\begin{notation}
The triangle with vertices $\{(0,0),(0,2\pi),(2\pi,0)\}$, is denoted by $T$.
\end{notation}
The following lemma states another important property of $\vol$;
\begin{lemma}\label{lem 7.1}

The function, $\vol(\theta,\alpha)$,
 is positive inside of $T$ and equals zero on its boundary.
\end{lemma}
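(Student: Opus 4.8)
The plan is to reduce the lemma to a single extremal computation: first dispatch the boundary by direct cancellation, then show that on the interior of $T$ the function $\vol$ is real analytic with exactly one critical point, and finally use compactness to force positivity in between. For the boundary: on the edge $\{\theta=0\}$ we get $\vol(0,\alpha)=D(1)-D(e^{i\alpha})+D(e^{i\alpha})=D(1)=0$, using $D(1)=0$ (property (2) of $D$ with $k=0$), and the same cancellation gives $\vol(\theta,0)=D(1)=0$ on $\{\alpha=0\}$; on the hypotenuse $\{\theta+\alpha=2\pi\}$ we have $e^{i(\theta+\alpha)}=1$ and $e^{i\alpha}=e^{-i\theta}=\overline{e^{i\theta}}$, so property (1), $D(\bar z)=-D(z)$, yields $\vol(\theta,\alpha)=D(e^{i\theta})-D(1)+D(\overline{e^{i\theta}})=D(e^{i\theta})-D(e^{i\theta})=0$. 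Thus $\vol\equiv 0$ on $\partial T$.

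On the interior we have $0<\theta$, $0<\alpha$, $\theta<\theta+\alpha<2\pi$, so none of $e^{i\theta},e^{i\alpha},e^{i(\theta+\alpha)}$ equals $0$ or $1$, and since $D$ is real analytic on $\mathds{C}\setminus\{0,1\}$, so is $\vol$ there. Differentiating $D(e^{i\vartheta})=\sum_{n\ge1}\sin(n\vartheta)/n^2$ term by term (legitimate by the Dirichlet test applied to $\sum_{n\ge1}\cos(n\vartheta)/n$, uniformly on compact subsets of $(0,2\pi)$) and using $\sum_{n\ge1}\cos(n\vartheta)/n=-\log\!\big(2\sin(\vartheta/2)\big)$, one obtains $\frac{d}{d\vartheta}D(e^{i\vartheta})=-\log\!\big(2\sin(\vartheta/2)\big)$ on $(0,2\pi)$, hence
\[
\partial_\theta\vol(\theta,\alpha)=\log\frac{\sin\!\big(\tfrac{\theta+\alpha}{2}\big)}{\sin\!\big(\tfrac{\theta}{2}\big)},
\qquad
\partial_\alpha\vol(\theta,\alpha)=\log\frac{\sin\!\big(\tfrac{\theta+\alpha}{2}\big)}{\sin\!\big(\tfrac{\alpha}{2}\big)}.
\]
A critical point in the interior therefore forces $\sin(\theta/2)=\sin(\alpha/2)=\sin((\theta+\alpha)/2)$ with all three half-arguments in $(0,\pi)$; since $\sin A=\sin B$ on $(0,\pi)$ means $A=B$ or $A+B=\pi$, and $A=B$ would give $\alpha=0$ or $\theta=0$, we get $2\theta+\alpha=2\pi$ and $\theta+2\alpha=2\pi$, so $\theta=\alpha=\tfrac{2\pi}{3}$. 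This point lies in the interior ($\tfrac{2\pi}{3}+\tfrac{2\pi}{3}=\tfrac{4\pi}{3}<2\pi$) and is the only critical point there.

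At that point $\vol(\tfrac{2\pi}{3},\tfrac{2\pi}{3})=2D(e^{2\pi i/3})-D(e^{4\pi i/3})=3D(e^{2\pi i/3})$ (again by $D(\bar z)=-D(z)$, since $e^{4\pi i/3}=\overline{e^{2\pi i/3}}$), and $D(e^{2\pi i/3})>0$: grouping $\sum_{n\ge1}\sin(2\pi n/3)/n^2$ in blocks of three gives $\tfrac{\sqrt3}{2}\sum_{k\ge0}\big((3k+1)^{-2}-(3k+2)^{-2}\big)>0$ (equivalently, the sign of $\frac{d}{d\vartheta}D(e^{i\vartheta})=-\log(2\sin(\vartheta/2))$ shows $D(e^{i\vartheta})$ rises on $(0,\pi/3)$ and falls on $(\pi/3,\pi)$ with $D(1)=D(-1)=0$, so $D(e^{i\vartheta})>0$ on $(0,\pi)\ni\tfrac{2\pi}{3}$). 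Now $\vol$ is continuous on the compact set $T$, so it attains a minimum; this minimum is $\le 0$ because $\vol\equiv 0$ on $\partial T$, and it cannot be attained in the interior, since an interior minimum would be a critical point and hence equal to $(\tfrac{2\pi}{3},\tfrac{2\pi}{3})$, where $\vol=3D(e^{2\pi i/3})>0$. Therefore $\min_T\vol=0$, so $\vol\ge 0$ on $T$; and if $\vol(p)=0$ at an interior $p$, then $p$ is a global minimum, hence a critical point, hence $p=(\tfrac{2\pi}{3},\tfrac{2\pi}{3})$, contradicting $\vol(p)>0$. So $\vol>0$ on the interior of $T$.

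The one genuinely delicate step is the justification of the term-by-term differentiation of the dilogarithm series and the resulting closed form $\frac{d}{d\vartheta}D(e^{i\vartheta})=-\log(2\sin(\vartheta/2))$; once the gradient is in hand, pinning down the single critical point and running the compactness argument are routine, and the only arithmetic input needed is the positivity of $D(e^{2\pi i/3})$.
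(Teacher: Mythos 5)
Your proof is correct and follows essentially the same route as the paper's: vanishing on the three edges of $T$ by direct cancellation, computation of the gradient (you via term-by-term differentiation of the Fourier series $\sum_{n\ge1}\sin(n\vartheta)/n^2$, the paper via the identity $-dD(z)=\eta(z,1-z)$, both yielding $\frac{d}{d\vartheta}D(e^{i\vartheta})=-\log|1-e^{i\vartheta}|=-\log(2\sin(\vartheta/2))$), identification of the unique interior critical point at $(2\pi/3,2\pi/3)$, and an extremum argument on the compact set $T$. You additionally spell out the uniqueness of the critical point, prove $D(e^{2\pi i/3})>0$ rather than citing a numerical value, and upgrade the conclusion from non-negativity to strict positivity in the interior --- welcome refinements, but not a different method.
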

\begin{proof}
 $\vol$ is continuous everywhere and real analytic everywhere except at $(\theta,\alpha)$ where $e^{i\theta}=1 $, $e^{i\alpha} =1$  or $e^{i(\theta+\alpha)}=1 $. Each boundary point of $T$, satisfies one of the following conditions:
\begin{enumerate}
\item The point $(\theta,\alpha)$ is on $\theta=0$.  Hence, $\vol(0,\alpha)=  D(e^{i0})-D(e^{i(0+\alpha)})+D(e^{i\alpha})=D(1)+D(e^{i\alpha})-D(e^{i\alpha})=0$.
\item The point $(\theta,\alpha)$ is on $\alpha=0$, so again $\vol(\theta,\alpha)=0$.
\item The point $(\theta,\alpha)$ is on $\theta+\alpha=2\pi$. Hence, $\vol(\theta,\alpha)=D(e^{i\theta})-D(e^{i(2\pi)})+D(e^{i(2\pi-\theta)})=D(e^{i\theta})-D(e^{i\theta})=0$. Notice that  $D(\bar{z})=-D(z)$.
\end{enumerate}
Therefore, $\vol$ is equal to zero at boundary points of $T$.
 Thus, we check the sign of $\vol$, at inner points of $T$, where the function is differentiable. To do so, first, we find the critical points of $\vol$. Hence, we search for $(\theta_0,\alpha_0)$, which satisfies the following:
   $$\frac{\partial \vol}{\partial \theta}|_{(\theta_0,\alpha_0)}=\frac{\partial \vol}{\partial \alpha}|_{(\theta_0,\alpha_0)}=0.$$
   To solve the above differential system of equations, first, we compute $\frac{\partial \vol}{\partial \theta}$: 
 \begin{align*}
   \frac{\partial \vol}{\partial \theta}
   =\frac{\partial  D(e^{i\theta})}{\partial \theta}- \frac{\partial D(e^{i(\theta+\alpha)})}{\partial \theta}.
\end{align*}
We compute $\frac{\partial  D(e^{i\theta})}{\partial \theta}$, using the fact that $-dD(z)= \eta_{(z,1-z)}$ or equivalently $dD(z)=\eta_{(1-z,z)}$.
Let
$Z(\theta) =e^{i \theta}$ 
and $z_0=Z(\theta_0)= e^{i \theta_0}$:

\begin{equation*}
   \frac{\partial D(e^{i\theta})}{\partial \theta}|_{(\theta_0,\alpha_0)}=d D|_{z_0} (\frac{d}{d\theta} e^{i \theta}|_{\theta_0})=
 \eta_{(1-z_0,z_0)}  (\frac{d}{d\theta} e^{i \theta}|_{\theta_0})
 = -\log |1-e^{i\theta_0}|\ \big(d \arg_{z_0}(\frac{d}{d\theta} e^{i \theta}|_{\theta_0})\big)
 \end{equation*}
 \begin{equation*}
    -\log |1-e^{i\theta_0}|\big(\frac{d}{d\theta}\arg (e^{i\theta})|_{\theta_0} \big)= -\log |1-e^{i\theta_0}|\big(\frac{d}{d\theta} \theta |_{\theta_0}\big)
 = -\log |1-e^{i\theta_0}|\big(1|_{\theta_0}\big) = -\log |1-e^{i\theta_0}|.
    \end{equation*} 
    
    In the same way, we compute the rest of the partial derivatives. We have: 
   \begin{align*}
    \frac{\partial D(e^{i\alpha})}{\partial \alpha}  =
 -\log |1-e^{i\alpha}|\ \ \ \ \ 
    \frac{\partial D(e^{i(\theta+\alpha)})}{\partial \alpha}=\frac{\partial D(e^{i(\theta+\alpha)})}{\partial \theta}  =
 -\log |1-e^{i(\theta+\alpha)}|.
 \end{align*}
Thus, the critical points are obtained by solving the following:
 \begin{align*}
     \frac{\partial \vol}{\partial \theta}=\log |1-e^{i (\theta+\alpha)}| -\log |1-e^{i \theta}|=
\frac{\partial \vol}{\partial \alpha}= \log |1-e^{i (\theta+ \alpha)}|-\log  |1-e^{i \alpha}|=0.  \end{align*}
Therefore, we have:
 \begin{align*}
     \log |1-e^{i (\theta+ \alpha)}|-\log |1-e^{i \alpha}| = \log |1-e^{i (\theta+ \alpha)}|-\log |1-e^{i \theta}|=0. 
   \end{align*}
 We assume that $0 < \theta< 2 \pi$ , $0 < \alpha < 2 \pi$ and $0 < \alpha + \theta < 2 \pi$, since we search for the solutions of the system inside $T$. Hence, the unique critical point correspond to $\theta=\alpha=2 \pi /3$. Note that $\vol(2 \pi /3,2 \pi /3)=3D(e^{\frac{2 \pi } {3}i})$ is approximately $2,03$. Now we continue the proof by contradiction.\\
Suppose $(\theta_0, \alpha_0)\in T^{\circ}$, with $\vol(\theta_0, \alpha_0)<0$. Therefore, there exists a minimum   denoted by $(\theta_1,\alpha_1)$ where $\vol(\theta_1,\alpha_1)<0$. Note that $\vol$ is differentiable inside $T$, so the minimum is another critical point inside $T$, which is different from $(2 \pi /3,2 \pi /3)$, but this is a contradiction. Hence, $\vol$ is positive inside $T$.
\end{proof}
\subsection{Concavity of $\vol$ on $T$}
In the previous section, $\vol$ was defined and we proved that is non-negative on $T$. In this section, we prove it is concave on $T$, which will be the main tool for finding $\lim_{d\rightarrow \infty} m(P_d)$. \cref{fig MM} is the graph of $\vol$.
\begin{figure}[!htpb]
    \centering
    \includegraphics[width=.25 \textwidth]{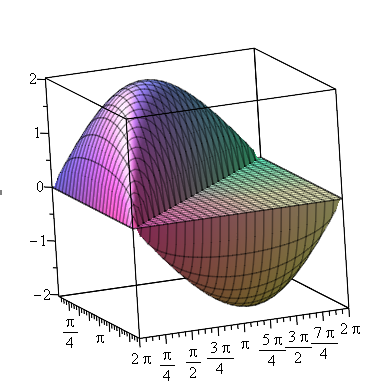}
    \caption{The graph of $\vol(\theta,\alpha).$}
    \label{fig MM}
\end{figure}
\begin{proposition}\label{pro 8.1}
 The function $\vol(\theta,\alpha)$
 is concave on $T$.
 \end{proposition}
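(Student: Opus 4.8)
The plan is to prove concavity of $\vol(\theta,\alpha)=D(e^{i\theta})-D(e^{i(\theta+\alpha)})+D(e^{i\alpha})$ on the triangle $T$ by showing that its Hessian is negative semidefinite at every interior point, where $\vol$ is real analytic. From the computation already carried out in the proof of \cref{lem 7.1}, we have the first partials
\[
\frac{\partial \vol}{\partial \theta}=\log|1-e^{i(\theta+\alpha)}|-\log|1-e^{i\theta}|,\qquad
\frac{\partial \vol}{\partial \alpha}=\log|1-e^{i(\theta+\alpha)}|-\log|1-e^{i\alpha}|.
\]
So the first step is simply to differentiate these once more. Writing $g(t):=\log|1-e^{it}|=\log\bigl(2|\sin(t/2)|\bigr)$, we have $g'(t)=\tfrac12\cot(t/2)$, and hence
\[
H=\begin{pmatrix} g'(\theta+\alpha)-g'(\theta) & g'(\theta+\alpha)\\[2pt] g'(\theta+\alpha) & g'(\theta+\alpha)-g'(\alpha)\end{pmatrix}.
\]
The claim is that $H\preceq 0$ on $T^\circ$, i.e. $g'(\theta+\alpha)-g'(\theta)\le 0$, $g'(\theta+\alpha)-g'(\alpha)\le 0$, and $\det H\ge 0$.

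Next I would record the two basic facts about $g'(t)=\tfrac12\cot(t/2)$ on the relevant range $0<t<2\pi$: it is strictly decreasing, and it is negative precisely for $\pi<t<2\pi$. The off-diagonal/trace conditions then follow immediately: inside $T$ we have $0<\theta<\theta+\alpha<2\pi$ and $0<\alpha<\theta+\alpha<2\pi$, so by monotonicity $g'(\theta+\alpha)\le g'(\theta)$ and $g'(\theta+\alpha)\le g'(\alpha)$, giving the two diagonal entries $\le 0$ (hence $\mathrm{tr}\,H\le 0$). It remains to check $\det H\ge 0$. Expanding,
\[
\det H=\bigl(g'(\theta+\alpha)-g'(\theta)\bigr)\bigl(g'(\theta+\alpha)-g'(\alpha)\bigr)-g'(\theta+\alpha)^2
= g'(\theta)g'(\alpha)-g'(\theta+\alpha)\bigl(g'(\theta)+g'(\alpha)\bigr).
\]
So the whole proposition reduces to the single trigonometric inequality
\[
g'(\theta)g'(\alpha)\ \ge\ g'(\theta+\alpha)\bigl(g'(\theta)+g'(\alpha)\bigr)\qquad\text{for }(\theta,\alpha)\in T^\circ,
\]
equivalently, after substituting $u=\theta/2$, $v=\alpha/2$ with $0<u,v$, $u+v<\pi$,
\[
\cot u\,\cot v\ \ge\ \cot(u+v)\,(\cot u+\cot v).
\]
Using the addition formula $\cot(u+v)=\dfrac{\cot u\cot v-1}{\cot u+\cot v}$, the right-hand side is exactly $\cot u\cot v-1$, so the inequality is the trivial $\cot u\cot v\ge \cot u\cot v-1$. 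That shows $\det H\ge 0$ everywhere in $T^\circ$ (with a genuine strict positive value except in degenerate limits), so $H$ is negative semidefinite.

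Finally I would assemble this into concavity on all of $T$: $\vol$ is continuous on the closed triangle $T$ and $C^2$ with negative semidefinite Hessian on the (convex) open triangle $T^\circ$, hence concave on $T^\circ$, and concavity extends to the closure by continuity. The main obstacle is not conceptual but bookkeeping: one must be careful that the addition-formula manipulation is valid, i.e. that $\cot u+\cot v=\dfrac{\sin(u+v)}{\sin u\sin v}\neq 0$ on $T^\circ$ — which holds since $0<u,v<\pi$ and $0<u+v<\pi$ force all three sines positive — and that the points where $\vol$ fails to be differentiable (the lines $\theta=0$, $\alpha=0$, $\theta+\alpha=2\pi$) are exactly the boundary of $T$, so they do not interfere with the interior Hessian argument. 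Care is also needed at the corners/edges of $T$ where $g'$ blows up; there the verification is via the limiting form of the $\det H\ge 0$ inequality rather than the Hessian directly, but the continuity extension handles this cleanly.
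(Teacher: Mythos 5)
Your proof is correct and follows essentially the same route as the paper: compute the Hessian via $\frac{d}{dt}\log|1-e^{it}|=\frac12\cot(t/2)$, use monotonicity of the cotangent for the diagonal sign, and collapse the determinant with the cotangent addition formula (the paper finds the determinant is a positive constant and concludes negative definiteness via leading principal minors, while you use the trace/determinant criterion for negative semidefiniteness — an immaterial difference). The only cosmetic remark is that your determinant is identically $\tfrac14$ on $T^\circ$, so the hedging about "degenerate limits" is unnecessary.
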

 \begin{proof}
   We compute the Hessian matrix of $\vol$, then we prove it is negative definite. To do so, we only compute $\frac{\partial \log |1-e^{i \theta}|}{ \partial \theta}$ and the rest is done in the same way:\\
  \begin{align*}
      \frac{\partial\vol}{\partial \theta}=\log |1-e^{i (\theta+\alpha)}| -\log |1-e^{i \theta}|,\\
      \frac{\partial \vol}{\partial \alpha}=  \log | 1-e^{i(\theta+ \alpha)}|-\log |1-e^{i \alpha}|,\\
      \frac{\partial^2 \vol}{\partial \theta^2}=  \frac{\partial \log | 1-e^{i(\theta+ \alpha)}|}{\partial \theta}-\frac{\partial \log |1-e^{i \theta}|}{ \partial \theta}. 
  \end{align*}
  
 % \begin{align*}
      %|1-e^{i \theta}|=|1-\cos \theta-i \sin \theta|=\sqrt{(1-\cos \theta )^2+\sin^2\theta}= \sqrt{2-2 \cos \theta}= \sqrt{4\sin^2 \theta/2}%
      %=2 \sin (\theta/2)
  %\end{align*}
We have $0\leq \theta \leq 2\pi$, so $|1-e^{i \theta}|=2 \sin (\theta/2)$.
  Therefore, we have $\frac{\partial \log |1-e^{i \theta}|}{ \partial \theta}=\frac{\partial \log (2 \sin (\theta/2)) }{ \partial \theta}=\frac{1}{2} \cot (\frac{\theta}{2})$.
After computing all the partial derivatives the Hessian matrix of $\vol$ is:

$$\mathbf H = 
 \left[ {\begin{array}{cc}
   \frac{\partial^2 \vol}{\partial \theta^2} & \frac{\partial^2 \vol}{\partial \theta \partial  \alpha} \\
   \\
   \frac{\partial^2 \vol}{\partial \alpha \partial  \theta} & \frac{\partial^2 \vol}{\partial \alpha^2} \\
  \end{array} } \right]  =
  \left[ {\begin{array}{cc}
    \frac{1}{2} \cot (\frac {\theta+\alpha}{2})-\frac{1}{2} \cot (\frac{\theta}{2}) &  \frac{1}{2} \cot (\frac {\theta+\alpha}{2}) \\
   \\
    \frac{1}{2} \cot (\frac {\theta+\alpha}{2}) &  \frac{1}{2} \cot (\frac {\theta+\alpha}{2})-\frac{1}{2} \cot (\frac{\alpha}{2}) \\
  \end{array} } \right].
$$
 The symmetric $(2 \times 2)$ Hessian matrix is  negative definite if and only if $D_1<0$ and $D_2>0$, where $D_i, (i=1,2)$ are leading principal minors. Then we compute the minors (inside $T$).
 \begin{itemize}
     \item Computation of $D_1$: $D_1= \frac{1}{2} \cot (\frac {\theta+\alpha}{2})-\frac{1}{2} \cot (\frac{\theta}{2}).$
     
The $\cot$ is decreasing between $[0,\pi]$ and since  $\alpha > 0$, we have $\frac{\theta}{2} ,  \frac{\theta+\alpha}{2}\in [0,\pi]$. Hence, 
$\cot (\frac {\theta+\alpha}{2}) < \cot (\frac {\theta}{2})$, and $D_1<0$.

\item Computation of $D_2$: 
 \begin{align*}
      & D_2 = Det(\mathbf H)\\
      & = \frac{1}{2}\left( \cot^2 (\frac {\theta+\alpha}{2})- \cot (\frac{\theta}{2}) \cot (\frac {\theta+\alpha}{2}) - \cot (\frac {\theta+\alpha}{2})\cot (\frac{\alpha}{2})  + \cot (\frac{\theta}{2})cot (\frac{\alpha}{2})- \cot^2 (\frac {\theta+\alpha}{2})\right)\\
      & = \frac{1}{2}\left(\cot (\frac{\theta}{2}) \cot (\frac{\alpha}{2})-\frac{\cot (\frac {\theta}{2})\cot(\frac{\alpha}{2})-1}{(\cot (\frac{\theta}{2})+ \cot (\frac{\alpha}{2}))}.(\cot (\frac{\theta}{2})+ \cot (\frac{\alpha}{2}))\right)=\frac{1}{2}.
  \end{align*}
Therefore, we have $D_2=\frac{1}{2}>0$ and consequently $\vol(\theta,\alpha)$ is concave inside $T$.
\end{itemize}
\end{proof}

\section{Convergence of $m(P_d)$}
\label{7}
In this section, using the relation between the values of the volume function at toric points and $\vol$, we compute $m(P_d)$ in terms of $\vol$. This computation leads to writing $m(P_d)$ as a difference of two expressions, and each of them is proportional to a Riemann sum of $\vol$ over $T$. After computing $\iint \limits_{T}\vol(\theta , \alpha) dA$, in \cref{52} and \cref{53}, we bound the errors between Riemann sums and this integral. At the end we use this bound to find the limit of $m(P_d)$.
\subsection{Computing\texorpdfstring{$\iint \limits_{T}\vol(\theta , \alpha) dA$}{Text}}
First of all, we recompute $m(P_d)$ in terms of the sum of the values of $\vol$;
\begin{theorem}\label{lem 9.1}
We have:
\normalfont
   $$ 2\pi m(P_d)= \frac{-2}{d+2} \sum_{0<k<k'\leq d} \vol\left(\frac{2k\pi}{d+1},\frac{2(k'-k)\pi}{d+1}\right)+\frac{2}{d+1} \sum_{0<k<k'\leq d+1} \vol\left(\frac{2k\pi}{d+2},\frac{2(k'-k)\pi}{d+2}\right).$$

\end{theorem}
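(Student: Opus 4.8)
The starting point is the Guilloux--Marché formula \eqref{Antonin formula}, namely
$2\pi m(P_d) = \sum \epsilon(x,y) V(x,y)$, where the sum runs over the toric points of $P_d$. By \cref{th3.2} this set is $U_{d+1}\cup U_{d+2}$, so the plan is first to split the sum into the two pieces coming from $U_{d+1}$ and $U_{d+2}$ and treat each separately, using the simplified form of the volume function at toric points (the leading $\frac{1}{(d+1)(d+2)}$ block vanishes on both $U_{d+1}$ and $U_{d+2}$ because $x^{d+1}=y^{d+1}=1$ resp. $x^{d+2}=y^{d+2}=1$ forces those three dilogarithm arguments to be $1$).

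\textbf{Parametrising each piece.} On $U_{d+1}$ write $x=e^{2k\pi i/(d+1)}$, $y=e^{2k'\pi i/(d+1)}$ with $k,k'\in\{1,\dots,d\}$ and $k\neq k'$; by \cref{def 2.3}(1) and \cref{pro 5.1}(1) we have $V(x,y)=\frac{1}{d+2}\vol\!\left(\frac{2k\pi}{d+1},\frac{2(k'-k)\pi}{d+1}\right)$ when $k<k'$, and $V$ changes sign under $k\leftrightarrow k'$. From \cref{th4.1}, for $(x,y)\in U_{d+1}$ the sign $\epsilon(x,y)$ is $-1$ when $\Omega(x,y)$ is above the diagonal (i.e. $k<k'$) and $+1$ below. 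Hence $\epsilon(x,y)V(x,y) = -\frac{1}{d+2}\vol(\dots)$ for the above-diagonal points, and for the below-diagonal point $(y,x)$ one gets $\epsilon\cdot V = (+1)\cdot(-\frac{1}{d+2}\vol(\dots)) $ — the same value — so the two contributions of each unordered pair coincide and the $U_{d+1}$-sum equals $-\frac{2}{d+2}\sum_{0<k<k'\le d}\vol\!\left(\frac{2k\pi}{d+1},\frac{2(k'-k)\pi}{d+1}\right)$. The identical bookkeeping on $U_{d+2}$, now with $x=e^{2k\pi i/(d+2)}$, $k,k'\in\{1,\dots,d+1\}$, $V(x,y)=\frac{1}{d+1}\vol(\dots)$, and the signs from the second bullet of \cref{th4.1} reversed relative to the $U_{d+1}$ case, produces $+\frac{2}{d+1}\sum_{0<k<k'\le d+1}\vol\!\left(\frac{2k\pi}{d+2},\frac{2(k'-k)\pi}{d+2}\right)$. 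Adding the two pieces gives exactly the claimed identity.

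\textbf{Main obstacle.} The only genuinely delicate point is the sign/symmetry reconciliation: one must verify that for an unordered pair $\{(x,y),(y,x)\}$ the product $\epsilon(x,y)V(x,y)$ is the \emph{same} on both orderings — it is, because swapping $x$ and $y$ negates $V$ (by the $=-V(\dots)$ clause of \cref{pro 5.1}) and also flips $\epsilon$ (by \cref{th4.1}), so the two minus signs cancel and each pair contributes twice its above-diagonal value. This is what turns the unordered-pair sum over all of $U_{d+1}$ (resp. $U_{d+2}$) into a factor $2$ times the ordered sum $\sum_{0<k<k'}$. One should also double-check the index ranges: in $U_{d+1}$ the exponents run over $1\le k\le d$ (excluding $k=0$, i.e. $x\neq1$), while in $U_{d+2}$ they run over $1\le k\le d+1$, matching the summation bounds $k'\le d$ and $k'\le d+1$ respectively; and the constraint $x\neq y$ from \cref{th 1.2} is exactly $k\neq k'$, automatically satisfied by $k<k'$. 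With these checks in place the computation is routine and the theorem follows.
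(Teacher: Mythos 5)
Your proposal is correct and follows essentially the same route as the paper: apply the Guilloux--Marché formula, split the sum over $U_{d+1}\cup U_{d+2}$, and pair each above-diagonal point with its swap, where the simultaneous sign flips of $\epsilon$ and $V$ produce the factor $2$ and the opposite overall signs of the two sums. One small correction: the parenthetical claim that the $\frac{1}{(d+1)(d+2)}$ block of $V$ \emph{vanishes} on $U_{d+2}$ is false --- there $x^{d+2}=1$ gives $x^{d+1}=\bar{x}$ (not $1$), so that block equals $\frac{1}{(d+1)(d+2)}\left[D(x)-D(y)-D(x/y)\right]$ and combines with the second block to yield the coefficient $\frac{1}{d+1}$ you in fact use from \cref{pro 5.1}; had the block truly vanished, the coefficient would be $\frac{1}{d+2}$ and the theorem's second sum would come out wrong.
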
 
\begin{proof}
We use the formula for the Mahler measure \cite{1};
$$m(P_d) =  \frac{1}{2\pi} \sum_{(x,y) \in U_{d+1} \cup \  U_{d+2}} \epsilon(x,y)V(x,y).$$
We break the sum into the two summations over $d+1$, and $d+2$ toric points. \cref{th4.1} gives the value of $\epsilon(x,y)$ at each toric point. Using \cref{pro 5.1} we have:
\begin{align}\label{formul}
    2\pi m(P_d) & = \frac{-1}{d+2} \sum_{0<k<k'\leq d} \vol\left(\frac{2k\pi}{d+1},\frac{2(k'-k)\pi}{d+1}\right)
    -  \frac{1}{d+2} \sum_{0<k<k'\leq d} \vol\left(\frac{2k\pi}{d+1},\frac{2(k'-k)\pi}{d+1}\right)\\
    & + \frac{1}{d+1} \sum_{0<k<k'\leq d+1} \vol\left(\frac{2k\pi}{d+2},\frac{2(k'-k)\pi}{d+2}\right)
    + \frac{1}{d+1} \sum_{0<k<k'\leq d+1} \vol\left(\frac{2k\pi}{d+2},\frac{2(k'-k)\pi}{d+2}\right)
     \\
    & = \frac{-2}{d+2} \sum_{0<k<k'\leq d} \vol\left(\frac{2k\pi}{d+1},\frac{2(k'-k)\pi}{d+1}\right)+\frac{2}{d+1} \sum_{0<k<k'\leq d+1} \vol\left(\frac{2k\pi}{d+2},\frac{2(k'-k)\pi}{d+2}\right).
\end{align}
\end{proof}

In \cref{formul}, when $d$ goes to infinity each summation looks like a Riemann sum of $\vol$ over $T$. We compute $\iint \limits_{T} \vol(\theta , \alpha) dA $, where $dA$ is the euclidean measure on $T$. 

\begin{lemma} \label{lem 9.2}
We have:
\normalfont
$$\iint \limits_{T} \vol(\theta , \alpha) dA =6\pi \zeta (3).$$
\end{lemma}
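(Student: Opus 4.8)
The plan is to compute this double integral directly, by Fubini's theorem together with the Fourier expansion of the Bloch--Wigner dilogarithm on the unit circle, namely $D(e^{i\theta})=\sum_{n\ge 1}\sin(n\theta)/n^{2}$ (property (2) listed above).

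First I would write $T=\{(\theta,\alpha)\mid \theta\ge 0,\ \alpha\ge 0,\ \theta+\alpha\le 2\pi\}$ and split the integral into the three contributions coming from the definition $\vol(\theta,\alpha)=D(e^{i\theta})-D(e^{i(\theta+\alpha)})+D(e^{i\alpha})$. Integrating the term $D(e^{i\theta})$ first in $\alpha$ over $[0,2\pi-\theta]$ gives $\int_{0}^{2\pi}(2\pi-\theta)D(e^{i\theta})\,d\theta$; by the symmetry $\theta\leftrightarrow\alpha$ of $T$, the term $D(e^{i\alpha})$ contributes the same quantity. For the mixed term I would change variables to $u=\theta+\alpha$ (keeping $\theta$, unit Jacobian), under which $T$ becomes $\{0\le\theta\le u\le 2\pi\}$, so that this contribution equals $\int_{0}^{2\pi}u\,D(e^{iu})\,du$. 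Collecting the three pieces and renaming the integration variable yields
$$\iint_{T}\vol(\theta,\alpha)\,dA=\int_{0}^{2\pi}\bigl(2(2\pi-\theta)-\theta\bigr)D(e^{i\theta})\,d\theta=\int_{0}^{2\pi}(4\pi-3\theta)\,D(e^{i\theta})\,d\theta.$$

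Next I would substitute the Fourier series; since $\sum_{n\ge 1}1/n^{2}<\infty$, the partial sums converge uniformly on $[0,2\pi]$ by the Weierstrass $M$-test, so term-by-term integration is legitimate. Using $\int_{0}^{2\pi}\sin(n\theta)\,d\theta=0$ and, via one integration by parts, $\int_{0}^{2\pi}\theta\sin(n\theta)\,d\theta=-2\pi/n$, only the $-3\theta$ part survives and one obtains
$$\iint_{T}\vol(\theta,\alpha)\,dA=-3\sum_{n\ge 1}\frac{1}{n^{2}}\Bigl(-\frac{2\pi}{n}\Bigr)=6\pi\sum_{n\ge 1}\frac{1}{n^{3}}=6\pi\zeta(3).$$

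The argument is essentially a routine computation; the only steps that require a little care are the change of variables $u=\theta+\alpha$ for the mixed term (keeping track of the new limits of integration over $T$) and the justification of integrating the dilogarithm's Fourier series term by term, both of which are standard. I do not expect a genuine obstacle here — the lemma is a warm-up serving the subsequent error analysis of the Riemann sums appearing in \cref{formul}.
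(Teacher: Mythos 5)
Your proof is correct and rests on the same key ingredients as the paper's: the Fourier expansion $D(e^{i\theta})=\sum_{n\ge1}\sin(n\theta)/n^{2}$ and term-by-term integration justified by uniform convergence. The only difference is organizational — you first reduce the double integral to the single integral $\int_{0}^{2\pi}(4\pi-3\theta)D(e^{i\theta})\,d\theta$ via the symmetry of $T$ and the substitution $u=\theta+\alpha$, whereas the paper substitutes the series directly into the iterated integral over $T$ and evaluates; both routes give $6\pi\zeta(3)$.
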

\begin{proof}
In this proof, we use the formula, $D(e^{ i \theta})=\sum_{n=1}^{\infty} \frac{\sin (n\theta)}{n^2}$ (see \cref{def 2.3}). The summation and the integration commute, since the series converges uniformly;
\begin{align*}
         \iint \limits_{T} \vol(\theta , \alpha) dA &= \int_{0}^{2\pi}\int_{0}^{2\pi-\alpha} \vol(\theta , \alpha) d\theta d \alpha\\
         &=\int_{0}^{2\pi}\int_{0}^{2\pi-\alpha} D(e^{ i \theta })-D(e^{i(\theta+\alpha)})+D(e^{i\alpha})d\theta d \alpha
         \\
         &=\int_{0}^{2\pi}\int_{0}^{2\pi-\alpha} \big(\sum_{n=1}^{\infty} \frac{\sin (n\theta)}{n^2}+\sum_{n=1}^{\infty} \frac{\sin (n\alpha)}{n^2}-\sum_{n=1}^{\infty} \frac{\sin (n(\theta+\alpha))}{n^2}\big)d\theta d \alpha\\
         &=\sum_{n=1}^{\infty}\int_{0}^{2\pi}\int_{0}^{2\pi-\alpha} \frac{\sin (n\theta)+\sin (n\alpha)-\sin (n(\theta+\alpha))}{n^2}d\theta d \alpha\\
         &=\sum_{n=1}^{\infty}\int_{0}^{2\pi} \big[\frac{\cos (n(\theta+\alpha))-\cos (n\theta)+n\theta \sin (n\alpha)}{n^3}\big]_{0}^{2\pi-\alpha}d \alpha\\
         &=\sum_{n=1}^{\infty}\int_{0}^{2\pi} \frac{2-2\cos(n\alpha)+n(2\pi - \alpha)\sin (n \alpha)}{n^3}d \alpha\\
         &=6\pi \sum_{n=1}^{\infty} \frac{1}{n^3}=6\pi \zeta (3).
   \end{align*}
\end{proof}
\subsection{An upper bound for the integral} \label{52}
 In this section, we exhibit an upper bound for the integral of $\vol(\theta,\alpha)$ using affine functions. First, we introduce a subpartition of $T$, and we define a summation over this subpartition. In \cref{con 9.2}, using the fact that any tangent plane to the graph of a concave function is above the graph, we find our upper bound.
\begin{observation} \label{ob 9.1}
\textbf{Square subpartition}:\\ Consider the set of the points $(\frac{2k\pi}{d+1},\frac{2(k'-k)\pi}{d+1})$ with $0<k<k'<d+1$ inside $T$. For $(x,y)$ in the set, consider the square with side $\frac{ 2\pi}{d+1}$ such that $(x,y)$ is at the center of the square. The union of the squares is called \textbf{(d+1)-square subpartition} of $T$ which does not cover all $T$. The set difference of $T$ and the (d+1)-square subpartition is called $\color{blue}\blu$. The $8$-square subpartition (for $d=7$) of $T$ is shown in \cref{fig 15}.
\begin{figure}[htbp!]
 \centering
    \includegraphics[width=.3 \textwidth]{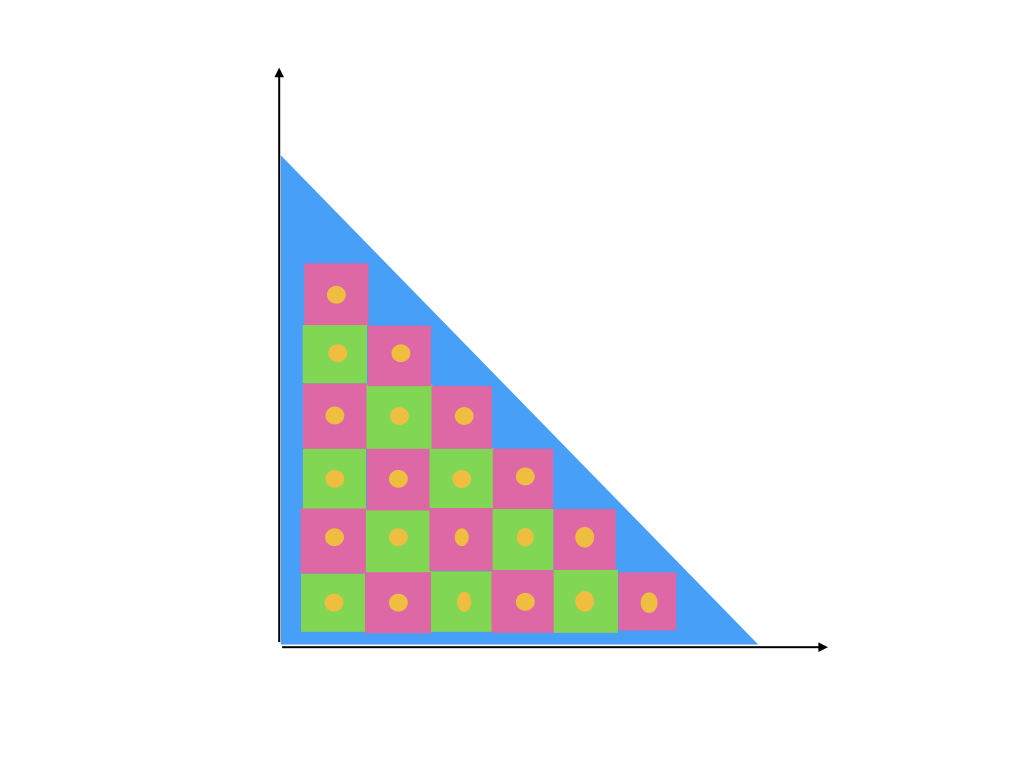}
    \caption{The figure of $8$-square subpartition of T.} \label{fig 15}
  \end{figure}

We define $S_{d+1}$ as follows:
\normalfont 
$$S_{d+1}:=\sum_{0<k<k'<d+1} \frac{4\pi^2}{(d+1)^2}\vol(\frac{2k\pi}{d+1},\frac{2(k'-k)\pi}{d+1}) . $$
\itshape
Where $\frac{4\pi^2}{(d+1)^2}$ is the area of  each square in $(d+1)-$square subpartition.

 We can repeat the same process, by choosing the points  $(\frac{2k\pi}{d+2},\frac{2(k'-k)\pi}{d+2})$, for $0<k<k'<d+2$. Similarly, we have $d+2$-square subpartition of $T$. We consider $S_{d+2}$. As we already mentioned, $S_{d+1}$ and $S_{d+2}$ appear in $m(P_d)$. The difference between the value of the integral and $S_{d}$ for a fixed $d$, is denoted by $E(d)$. For instance, for the $d+1$-square subpartition we have:
\normalfont $$E(d+1)=\left|\iint \limits_{T} \vol(\theta , \alpha) dA - \frac{4\pi^2}{(d+1)^2}\sum_{0<k<k'\leq d} \vol\left(\frac{2k\pi}{d+1},\frac{2(k'-k)\pi}{d+1}\right)\right|,$$
\itshape
 where $\frac{4\pi^2}{(d+1)^2}$ is the area of the squares.
 \end{observation}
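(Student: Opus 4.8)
The plan is to verify the assertions implicit in \cref{ob 9.1}: that each square of the $(d+1)$-square subpartition (and, likewise, of the $(d+2)$-square subpartition) is contained in $T$, that these squares have pairwise disjoint interiors, and that their union is a \emph{proper} subregion of $T$, so that $\blu$ has positive area and the quantities $S_{d+1}$, $S_{d+2}$, $E(d+1)$, $E(d+2)$ are well-defined. I would begin with the change of indices $a:=k$, $b:=k'-k$, under which the constraint $0<k<k'<d+1$ on integers becomes $a\ge 1$, $b\ge 1$, $a+b\le d$; the corresponding center is $c_{a,b}=(\tfrac{2\pi a}{d+1},\tfrac{2\pi b}{d+1})$ and the square is $Q_{a,b}=c_{a,b}+[-\tfrac{\pi}{d+1},\tfrac{\pi}{d+1}]^2$, of side $\tfrac{2\pi}{d+1}$.

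For containment, recall $T=\{(\theta,\alpha):\theta\ge 0,\ \alpha\ge 0,\ \theta+\alpha\le 2\pi\}$. On $Q_{a,b}$ the minimum of $\theta$ is $\tfrac{2\pi a-\pi}{d+1}$, which is $\ge 0$ exactly because $a\ge 1$; symmetrically the minimum of $\alpha$ is $\ge 0$ exactly because $b\ge 1$; and the maximum of $\theta+\alpha$ on $Q_{a,b}$ equals $\tfrac{2\pi(a+b)+2\pi}{d+1}=\tfrac{2\pi(a+b+1)}{d+1}$, which is $\le 2\pi$ exactly because $a+b\le d$. Hence $Q_{a,b}\subseteq T$, with room to spare along the hypotenuse --- this is why the correct range for the $(d+1)$-subpartition uses the strict inequality $k'<d+1$ rather than $k'\le d+1$. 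For disjointness, two distinct admissible pairs differ by at least $1$ in the $a$-coordinate or in the $b$-coordinate, so the corresponding centers differ by at least $\tfrac{2\pi}{d+1}$ in that coordinate, i.e.\ by at least the common side length; hence the open squares are pairwise disjoint. For the covering claim I would count admissible pairs: $\#\{(a,b):a\ge 1,\ b\ge 1,\ a+b\le d\}=\sum_{s=2}^{d}(s-1)=\binom{d}{2}$, so the subpartition has total area $\binom{d}{2}\cdot\tfrac{4\pi^2}{(d+1)^2}=\tfrac{2\pi^2 d(d-1)}{(d+1)^2}$, strictly less than $\area(T)=2\pi^2$; more precisely $\area(\blu)=2\pi^2(1-\tfrac{d(d-1)}{(d+1)^2})=2\pi^2\cdot\tfrac{3d+1}{(d+1)^2}$, which is positive for every $d\ge 1$ and is $O(1/d)$. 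The $(d+2)$-square subpartition is handled identically: now $a+b\le d+1$, there are $\binom{d+1}{2}$ squares, and the associated uncovered region has area $2\pi^2\cdot\tfrac{3d+4}{(d+2)^2}$, again $O(1/d)$.

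Since every step is elementary, there is no genuine obstacle; the only point demanding care is the index bookkeeping, namely that the three ranges appearing in the statement --- $0<k<k'<d+1$ in the definition of $S_{d+1}$, $0<k<k'\le d$ in the displayed formula for $E(d+1)$, and the ``points inside $T$'' used to build the subpartition --- describe one and the same set of integer pairs, and that $(a,b)=(k,k'-k)$ is a bijection of that set onto $\{a\ge 1,\ b\ge 1,\ a+b\le d\}$. I would also record the estimate $\area(\blu)=O(1/d)$ explicitly even though it is not logically required for the observation, because it is exactly the ingredient that, together with the concavity of $\vol$ on $T$ (\cref{pro 8.1}) and the vanishing of $\vol$ on $\partial T$ (\cref{lem 7.1}), will control $E(d+1)$ and $E(d+2)$ and thereby yield $\lim_{d\to\infty} m(P_d)=\tfrac{9}{2\pi^2}\zeta(3)$.
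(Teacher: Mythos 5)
Your verification is correct, and it is worth saying up front that the paper offers no proof here at all: \cref{ob 9.1} is treated purely as a definition, so there is nothing to compare your argument against except the consistency of the quantities the paper uses later. On that score your bookkeeping checks out. The change of variables $(a,b)=(k,k'-k)$ correctly identifies the index set with $\{a\ge 1,\ b\ge 1,\ a+b\le d\}$, the containment and disjointness arguments are sound, and your count of $\binom{d}{2}$ squares gives $\area({\color{blue}\blu})=2\pi^2\cdot\frac{3d+1}{(d+1)^2}$, which is exactly the value the paper asserts without derivation in the proof of \cref{lem 9.3}. Note that this slightly contradicts the paper itself: in the proof of \cref{con 9.2} the number of squares is stated as $\frac{(d-1)(d-2)}{2}$, which is incompatible with the blue-part area used in \cref{lem 9.3}; your count $\frac{d(d-1)}{2}$ is the one consistent with that area and with the index range $0<k<k'<d+1$, so the figure in \cref{con 9.2} appears to be an error in the paper rather than in your proposal. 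Your closing remark that the ranges $0<k<k'<d+1$ and $0<k<k'\le d$ coincide for integer indices, and that $\area({\color{blue}\blu})=O(1/d)$ is the ingredient feeding into the error estimate, accurately anticipates how \cref{lem 9.3} uses this observation.
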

  We introduce another notation:
 $$\epsilon(d+1):=\iint \limits_{{\color{blue}\blu}} \vol(\theta , \alpha) dA.$$

 In the following lemma, we compute an upper bound for $\iint \limits_{T} \vol(\theta , \alpha) dA $.
 \begin{lemma}\label{con 9.2}
 We have  $E(d+1) \leq \epsilon(d+1)$. Moreover,
\normalfont
 \begin{align}
     \iint \limits_{T} \vol(\theta , \alpha) dA \leq \epsilon(d+1)+\frac{4\pi^2}{(d+1)^2} \sum_{0<k<k'\leq d}  \vol\left(\frac{2k\pi}{d+1},\frac{2(k'-k)\pi}{d+1}\right).
     \label{eq3}
 \end{align}
 \end{lemma}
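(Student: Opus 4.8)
The plan is to exploit the concavity of $\vol$ on $T$ (\cref{pro 8.1}), the nonnegativity of $\vol$ on $T$ (\cref{lem 7.1}), and the elementary fact that each square of the $(d+1)$-square subpartition is centrally symmetric about its centre, which is exactly the point at which $\vol$ is sampled in $S_{d+1}$. Write $Q_{k,k'}$ for the square of side $h:=\tfrac{2\pi}{d+1}$ centred at $p_{k,k'}=\bigl(\tfrac{2k\pi}{d+1},\tfrac{2(k'-k)\pi}{d+1}\bigr)$ for $0<k<k'\le d$. A direct check (the centres lie on the lattice $h\mathbb Z^2$, and $\theta+\alpha\le (k'+1)h\le (d+1)h=2\pi$ on $Q_{k,k'}$) shows the $Q_{k,k'}$ have pairwise disjoint interiors, are contained in $T$, and that $T$ is, up to a null set, the disjoint union of the $Q_{k,k'}$ together with $\blu$. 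Recall $S_{d+1}=h^2\sum_{0<k<k'\le d}\vol(p_{k,k'})$.

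The first and main quantitative step is a one-square estimate. Fix $Q=Q_{k,k'}$ with centre $p$. For $q\in Q$, its reflection $q':=2p-q$ again lies in $Q\subseteq T$ and $p=\tfrac12(q+q')$, so concavity gives $\vol(p)\ge\tfrac12\bigl(\vol(q)+\vol(q')\bigr)$. Integrating over $q\in Q$ and using that $q\mapsto q'$ is a measure-preserving involution of $Q$, I obtain $h^2\vol(p)=|Q|\,\vol(p)\ge\iint_Q\vol\,dA$. Summing over all squares yields $\iint_{\bigcup Q}\vol\,dA\le S_{d+1}$. Since $\vol\ge0$ on $T$, $\iint_T\vol\,dA=\iint_{\bigcup Q}\vol\,dA+\epsilon(d+1)\le S_{d+1}+\epsilon(d+1)$, which is precisely \eqref{eq3}; in particular $\iint_T\vol\,dA-S_{d+1}\le\epsilon(d+1)$.

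It then remains to obtain the matching bound $S_{d+1}-\iint_T\vol\,dA\le\epsilon(d+1)$, which together with the previous line gives $E(d+1)=\bigl|\iint_T\vol\,dA-S_{d+1}\bigr|\le\epsilon(d+1)$. Setting $\Delta:=S_{d+1}-\iint_{\bigcup Q}\vol\,dA=\sum_{k,k'}\bigl(h^2\vol(p_{k,k'})-\iint_{Q_{k,k'}}\vol\,dA\bigr)\ge0$ for the total concavity defect, this reduces to $\Delta\le 2\,\epsilon(d+1)$. My plan here is to split the squares into those lying in the interior of $T$ and those meeting the boundary strip: for an interior square I would bound its defect by $|Q|$ times $h^2$ times the supremum of $|\nabla^2\vol|$ over $Q$, using the explicit Hessian of \cref{pro 8.1}, and sum these to $O(d^{-2})$; for a square adjacent to $\blu$ I would instead charge its defect directly against $\iint\vol\,dA$ over the portion of $\blu$ next to it, using that $\vol$ vanishes on $\partial T$ (\cref{lem 7.1}) so that both quantities are governed by the same near-boundary behaviour of $\vol$.

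The one-square concavity estimate and the summation are routine. The main obstacle is the near-corner analysis in the last step: at the three vertices of $T$ the derivatives of $\vol$ blow up logarithmically (the Hessian entries $\tfrac12\cot(\theta/2)$ and $\tfrac12\cot(\alpha/2)$ of \cref{pro 8.1} diverge), and this is exactly where the squares abut $\blu$, so the per-square defects are largest precisely there. Establishing the sharp comparison $\Delta\le 2\,\epsilon(d+1)$ near the corners — rather than merely $\Delta=o(1)$, which would already suffice for convergence but not for the stated inequality — is the delicate point; away from the corners the Hessian is bounded and the contribution is a harmless $O(d^{-2})$.
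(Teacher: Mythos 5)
Your proof of the displayed inequality \eqref{eq3} is correct and is essentially the paper's argument: both rest on the fact that, by concavity, the midpoint value of $\vol$ on each square overestimates its average, so $\iint_{\bigcup Q}\vol\,dA\le S_{d+1}$, and the leftover $\iint_{\blu}\vol\,dA=\epsilon(d+1)$ then gives \eqref{eq3}. Your route to the one-square estimate (averaging the concavity inequality $\vol(p)\ge\tfrac12(\vol(q)+\vol(q'))$ over the centrally symmetric square) is a clean, slightly more elementary substitute for the paper's tangent-plane argument, which instead uses that the tangent plane at the centre lies above the graph and integrates to $|Q|\,\vol(p)$ over the symmetric square. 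Either version is fine.

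The problem is the first assertion, $E(d+1)\le\epsilon(d+1)$, where $E$ is defined with an absolute value. You correctly see that a second inequality $S_{d+1}-\iint_T\vol\,dA\le\epsilon(d+1)$ is needed, but the plan you sketch for it (per-square concavity defects controlled by Hessian bounds, with a ``delicate'' near-corner analysis where $\cot(\theta/2)$ blows up, aiming at $\Delta\le 2\epsilon(d+1)$) is both unfinished and unnecessary. The reverse inequality is trivial once one knows $S_{d+1}\le\iint_T\vol\,dA$, which is exactly the content of \cref{lem 9.4}: concavity makes $\vol$ lie above its affine interpolant on each triangle of the triangular partition of \cref{ob 9.2}, each inner vertex is the centre of a unique square and is shared by six triangles of area $\tfrac{2\pi^2}{(d+1)^2}$, and summing gives $S_{d+1}\le\iint_T\vol\,dA$. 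Hence the signed difference $\iint_T\vol\,dA-S_{d+1}$ is nonnegative, the absolute value in $E(d+1)$ is harmless, and $E(d+1)\le\epsilon(d+1)$ follows from \eqref{eq3} alone. (To be fair, the paper's own proof also writes ``Thus, $E(d+1)\le\epsilon(d+1)$'' immediately after the one-sided bound, implicitly relying on this nonnegativity, which it only establishes in the following subsection.) So: keep your proof of \eqref{eq3}, discard the Hessian/corner programme, and either cite the lower bound of \cref{lem 9.4} or reproduce its secant-plane argument to close the absolute value.
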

 \begin{proof}
According to \cref{ob 9.1}, for a fixed $d$, $T$ is partitioned into $\frac{(d-1)(d-2)}{2}$ squares and the blue part.
 The function $\vol$ is concave and differentiable inside $T$, especially on each square.  Let us focus on arbitrary and fixed square and denote its central point by $(\theta^*,\alpha^*)$. The tangent plane to the graph of $\vol$ at $(\theta^*,\alpha^*)$ denoted by ${\tang_{\vol}(\theta^*,\alpha^*)}$, is located above the graph for all $(\theta,\alpha)$ in the square, so we have:
 
 \begin{align}
\vol(\theta,\alpha) \leq
\tang_{\vol}(\theta^*,\alpha^*).
 \end{align}
The above inequality leads to an upper bound for the double integrals over the square. The volume of the rectangular cuboid with the square as its base and bounded above by the tangent plan of $\vol(\theta,\alpha)$, at $(\theta^*,\alpha^*)$, is greater than $\iint \limits_{\square} \vol(\theta,\alpha) dA$. Hence, we have:
 \begin{align*}
     \iint \limits_{\square}\vol(\theta , \alpha) dA \leq  \iint \limits_{\square}\tang_{\vol}(\theta^* , \alpha^*) dA = \frac{4\pi^2}{(d+1)^2} \vol(\theta^*,\alpha^*).
     \end{align*}
   Therefore, we have:
      \begin{align*}
     \sum_{all\ squares\ inside\ T}\iint \limits_{\square}\vol(\theta , \alpha) dA \leq  \sum_{0<k<k'\leq d} \frac{4\pi^2}{(d+1)^2} \vol(\frac{2k\pi}{d+1},\frac{2(k'-k)\pi}{d+1}),
     \end{align*}
     which is equivalent to the following:
    \begin{align*}
    &\iint \limits_{T} \vol(\theta , \alpha) dA - \sum_{all\ squares\ inside\ T}\iint \limits_{\square}\vol(\theta , \alpha) dA \geq \\
    &\iint \limits_{T} \vol(\theta , \alpha) dA -\sum_{0<k<k'\leq d} \frac{4\pi^2}{(d+1)^2} \vol(\frac{2k\pi}{d+1},\frac{2(k'-k)\pi}{d+1}).
     \end{align*}
    
     Thus, $E(d+1)\leq \epsilon(d+1)$; moreover, we have:
     \begin{align}
     \iint \limits_{T} \vol(\theta , \alpha) dA \leq \epsilon(d+1)+\frac{4\pi^2}{(d+1)^2} \sum_{0<k<k'\leq d}  \vol(\frac{2k\pi}{d+1},\frac{2(k'-k)\pi}{d+1}).
 \end{align}
      \end{proof}
      
 \subsection{A lower bound for the integral}\label{53}
In this section, we define a partition of $T$, which leads to a lower bound for the integral.
\begin{observation}\label{ob 9.2}
\textbf{Triangular partition}:\\
The triangle $T$ is partitioned into the smaller triangles belong to $T_1 \cup T_2$, where $T_1$ and $T_2$ define as follows:
$$T_1:=\bigcup\limits_{i=0}^{d+1}\ \bigcup\limits_{j=0}^{d+1-i}\left\{\left[\left(i\frac{2\pi}{d+1},j\frac{2\pi}{d+1}\right),\left(i\frac{2\pi}{d+1},(j+1)\frac{2\pi}{d+1}\right),\left((i+1)\frac{2\pi}{d+1},j\frac{2\pi}{d+1}\right)\right]\right\},$$
$$T_2:=\bigcup\limits_{i=1}^{d}\ \bigcup\limits_{j=1}^{d+1-i} \left\{\left[\left((i-1)\frac{2\pi}{d+1},j\frac{2\pi}{d+1}\right),\left(i\frac{2\pi}{d+1},j\frac{2\pi}{d+1}\right),\left(i\frac{2\pi}{d+1},(j-1)\frac{2\pi}{d+1}\right)\right]\right\}.$$
In the definition of $T_1$ and $T_2$, $[(i_1,j_1),(i_2,j_2),(i_3,j_3)]$ denotes the triangle with vertices $(i_1,j_1),(i_2,j_2)$, and $(i_3,j_3)$. 
The figure for the $2$-triangular partition is shown in \cref{fig 19}; indeed, the pink and green triangles respectively belong to $T_1$ and $T_2$.
\begin{figure}[htbp!]
  \centering
    \includegraphics[width=.16\textwidth]{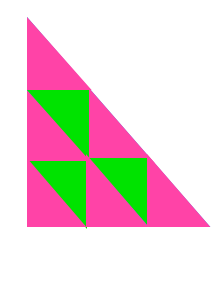}
    \caption{The figure of $2$-triangular partitions of T.} \label{fig 19}
  \end{figure}
   \end{observation}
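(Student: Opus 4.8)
The plan is to read the two families as the two halves, cut along a common anti-diagonal, of the unit cells of the lattice obtained by rescaling $T$. Writing $h=\frac{2\pi}{d+1}$ and dividing every coordinate by $h$, the region $T$ becomes the lattice triangle $\Delta=\{(u,v):u\ge 0,\ v\ge 0,\ u+v\le d+1\}$ with vertices $(0,0),(d+1,0),(0,d+1)$. In these coordinates the triangles of $T_1$ become the \emph{lower-left} triangles $L_{i,j}=\mathrm{conv}\{(i,j),(i,j+1),(i+1,j)\}$ and those of $T_2$ the \emph{upper-right} triangles $U_{i,j}=\mathrm{conv}\{(i-1,j),(i,j),(i,j-1)\}$. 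The elementary observation I would record first is that $L_{a,b}$ and $U_{a+1,b+1}$ are exactly the two triangles into which the unit cell $[a,a+1]\times[b,b+1]$ is split by its anti-diagonal $u+v=a+b+1$, together with the remark that a cell lies inside $\Delta$ precisely when its far corner does, i.e. when $a+b\le d-1$, whereas a cell with $a+b=d$ is cut by the hypotenuse $u+v=d+1$ of $\Delta$ along its own anti-diagonal, so that only its lower-left half $L_{a,b}$ survives inside $\Delta$.

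With this dictionary I would establish the partition in the two standard steps: the small triangles cover $T$, and distinct ones have disjoint interiors. For the covering, take an interior point $(u,v)$ of $\Delta$ and set $i=\lfloor u\rfloor$, $j=\lfloor v\rfloor$; since $u+v<d+1$ we get $i+j\le d$. If $i+j\le d-1$ the whole cell is in $\Delta$ and $(u,v)$ lies in $L_{i,j}$ or in $U_{i+1,j+1}$ according as the fractional parts satisfy $\{u\}+\{v\}\le 1$ or $\{u\}+\{v\}\ge 1$, and both triangles belong to the respective families. If $i+j=d$ the cell straddles the hypotenuse, but $(u,v)\in\Delta$ forces $\{u\}+\{v\}\le 1$, so $(u,v)$ lies in the surviving half $L_{i,j}$; this is exactly the place where the triangles of $T_1$ with $i+j=d+1$ must be discarded, since such an $L_{i,j}$ meets $\Delta$ only in the single boundary vertex $(i,j)$ and therefore contributes nothing to covering $T$. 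For disjointness, two distinct triangles either sit in different cells, whose interiors are disjoint, or are the two halves $L_{a,b},U_{a+1,b+1}$ of one cell, whose interiors are separated by the open anti-diagonal segment; in either case their interiors do not meet.

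As a consistency check I would compare areas: each small triangle has area $\tfrac12$ in lattice units, there are $\frac{(d+1)(d+2)}{2}$ triangles $L_{i,j}$ (those with $i,j\ge 0$, $i+j\le d$) and $\frac{d(d+1)}{2}$ triangles $U_{i,j}$ (those with $i,j\ge 1$, $i+j\le d+1$), for a total of $(d+1)^2$ triangles and total area $\tfrac12(d+1)^2$ lattice units, which is exactly $\mathrm{area}(\Delta)$; rescaling by $h^2=\frac{4\pi^2}{(d+1)^2}$ returns $\mathrm{area}(T)=2\pi^2$. This count also fixes the correct index ranges and shows that $T_1\cup T_2$ realises all $(d+1)^2$ cells of the standard triangulation of $\Delta$.

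The only genuine subtlety, and hence the step I expect to cost the most care, is the treatment of the cells meeting the hypotenuse $u+v=d+1$: one must check that there $T_2$ contributes nothing (its upper-right half lies outside $\Delta$) while $T_1$ contributes exactly the lower-left half, and one must explicitly set aside the triangles of $T_1$ indexed by $i+j=d+1$, which protrude beyond $T$ and touch it only along $\partial T$. Everything else reduces to the routine fact that a lattice square cut along a diagonal yields two triangles with disjoint interiors whose union is the square, applied cell by cell.
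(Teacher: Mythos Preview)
Your argument is correct. The paper does not actually prove this observation: it is stated as a definition of the triangulation together with a figure, and the claim that $T_1\cup T_2$ partitions $T$ is taken as visually evident. Your rescaling to lattice coordinates, followed by the cell-by-cell covering/disjointness check and the area count, is a clean and standard way to make the claim rigorous.

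You have in fact been more careful than the paper on one point. The index range for $T_1$ as written in the paper runs over $0\le i\le d+1$, $0\le j\le d+1-i$, which includes the triangles with $i+j=d+1$; as you note, these meet $T$ only in a single boundary vertex and lie otherwise outside $T$. The partition statement is therefore only correct once those extraneous triangles are discarded (or understood to contribute nothing), and your proof makes this explicit. The paper silently ignores this, since the later argument in \cref{lem 9.4} only uses the values of $\vol$ at inner vertices, where the discrepancy is invisible.
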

 \begin{definition}
 The vertices of small triangles, defined in \cref{ob 9.2}, not located on the boundary of $T$ are called \textbf{inner} vertices. The set of all these inner vertices is denoted by $In(T)$.
 \end{definition}
 The following fact leads to an important correspondence between the triangular partition, and the square subpartition. The proof is elementary. 
   \begin{fact}\label{re 5.2}
Each inner vertex of a small triangle, in the $d$-triangular partition, is a central point of a unique square in the $d$-square subpartition.
   \end{fact}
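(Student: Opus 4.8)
The plan is to reduce the claim to a bookkeeping identity between two explicitly described finite sets of points of the scaled lattice $\frac{2\pi}{d+1}\mathbb{Z}^2$ inside $T$: the set $In(T)$ of inner vertices of the $d$-triangular partition on one hand, and the set of centers of the squares of the $d$-square subpartition on the other. Once both are written coordinate-wise they will be visibly equal, and the word ``unique'' in the statement will follow from the obvious observation that distinct squares of a subpartition have distinct centers.

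First I would read off from \cref{ob 9.2} the full vertex set of the small triangles. Every triangle listed in $T_1$ or in $T_2$ has its three vertices among the points $\left(i\frac{2\pi}{d+1},\, j\frac{2\pi}{d+1}\right)$ with $i,j\in\mathbb{Z}_{\ge 0}$ and $i+j\le d+1$, and conversely every such point occurs as a vertex of at least one of them; so the vertex set of the partition is $V=\left\{\left(i\frac{2\pi}{d+1},\, j\frac{2\pi}{d+1}\right): i,j\ge 0,\ i+j\le d+1\right\}$. A point of $V$ lies on $\partial T$ exactly when $i=0$ (the edge $\theta=0$), or $j=0$ (the edge $\alpha=0$), or $i+j=d+1$ (the hypotenuse $\theta+\alpha=2\pi$); discarding these gives $In(T)=\left\{\left(i\frac{2\pi}{d+1},\, j\frac{2\pi}{d+1}\right): i\ge 1,\ j\ge 1,\ i+j\le d\right\}$.

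Next I would describe the square-centers. By \cref{ob 9.1}, the $d$-square subpartition has one square of side $\frac{2\pi}{d+1}$ centered at each point $\left(\frac{2k\pi}{d+1},\,\frac{2(k'-k)\pi}{d+1}\right)$ with integers $0<k<k'<d+1$. Setting $i:=k$ and $j:=k'-k$ rewrites this center as $\left(i\frac{2\pi}{d+1},\, j\frac{2\pi}{d+1}\right)$, and $(k,k')\mapsto(i,j)$ is a bijection from $\{(k,k'):0<k<k'<d+1\}$ onto $\{(i,j):i\ge 1,\ j\ge 1,\ i+j\le d\}$, with inverse $(i,j)\mapsto(i,\,i+j)$. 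Hence the set of square-centers coincides with $In(T)$ and the correspondence is one-to-one, which is exactly the assertion. The only point needing care — and the nearest thing to an obstacle in an otherwise elementary argument — is the index bookkeeping: matching the strict chain $0<k<k'<d+1$ with $i\ge 1,\ j\ge 1,\ i+j\le d$, and checking that the triangles of $T_1$ and $T_2$ adjacent to the hypotenuse really do realize every lattice vertex with $i+j\le d+1$, so that no vertex lying on or just inside the hypotenuse is overlooked.
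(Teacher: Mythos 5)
Your proposal is correct, and it is precisely the elementary coordinate bookkeeping the paper has in mind: the paper omits the proof entirely (stating only that it is elementary), and your identification of both the inner vertices and the square centers with the lattice points $\bigl(i\tfrac{2\pi}{d+1}, j\tfrac{2\pi}{d+1}\bigr)$, $i\ge 1$, $j\ge 1$, $i+j\le d$, via the substitution $(k,k')\mapsto(k,k'-k)$ is the intended verification. Nothing further is needed.
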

If we restrict $\vol$ to the triangle 
$[a,b,c]$ , since it is concave there exists a unique affine function called $\chi$, such that $\vol(a)=\chi(a), \vol(b)=\chi(b)$,  $\vol(c)=\chi(c)$ and for any $(\theta,\alpha) $ in the triangle we have $\chi(\theta,\alpha) \leq \vol(\theta,\alpha)$. 
Therefore, we have:
 $$\iint \limits_{[a,b,c]} \chi(\theta , \alpha) dA \leq \iint \limits_{[a,b,c]} \vol(\theta , \alpha) dA .$$
\begin{lemma}
Let an arbitrary triangle in $T_1$, introduced in \cref{ob 9.2}, denotes by $[a,b,c]$, and we have:
 \begin{center}
     \begin{tikzpicture}
\draw (0,0) node[anchor=north]{$a$}
  -- (0,1) node[anchor=south]{$b$}
  -- (1,0) node[anchor=north]{$c$}
  -- cycle;
  \path[fill=purple,opacity=0.2] (0,0) -- (1,0) -- (0,1);
\end{tikzpicture}
 \end{center}
\normalfont
 \begin{equation}\label{@}
     \iint \limits_{[a,b,c]}\chi(\theta,\alpha) dA= \area[a,b,c]\left(\frac{1}{3}\vol(a)+ \frac{1}{3}\vol(b)+\frac{1}{3}\vol(c)\right).
 \end{equation} 
 \itshape
Similarly, for another triangle $[b,c,d ]$ belongs to $T_2$, we have:
    \begin{center}
     \begin{tikzpicture}
\draw (1,1) node[anchor=south]{$d$}
  -- (0,1) node[anchor=south]{$b$}
  -- (1,0) node[anchor=north]{$c$}
  -- cycle;
  \path[fill=green,opacity=0.2] (0,1) -- (1,0) -- (1,1);
\end{tikzpicture}
 \end{center}
 \normalfont
 \begin{align}
        \ \iint \limits_{[b,c,d ]}\chi(\theta,\alpha) dA=\area[b,c,d ]\left(\frac{1}{3}\vol(d)+\frac{1}{3}\vol(b)+\frac{1}{3}\vol(c)\right).
      \label{eq2}
      \end{align}
      \end{lemma}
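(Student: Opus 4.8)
The plan is to reduce both identities to one elementary fact: the integral of an \emph{affine} function over a triangle equals the area of the triangle times the value of the function at the barycenter. Granting this, the lemma is immediate. Indeed $\chi$ is affine and the barycenter of $[a,b,c]$ is $\tfrac13(a+b+c)$, so affinity gives $\chi\bigl(\tfrac13(a+b+c)\bigr)=\tfrac13\chi(a)+\tfrac13\chi(b)+\tfrac13\chi(c)$; and since $\chi$ was defined so that $\vol(a)=\chi(a)$, $\vol(b)=\chi(b)$, $\vol(c)=\chi(c)$, substituting yields exactly \eqref{@}. The triangle $[b,c,d]\in T_2$ is handled word for word the same way, with $a$ replaced by $d$, which gives \eqref{eq2}. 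So all the work is in the barycenter fact.

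To establish it, first I would note it suffices to treat the reference triangle $\Delta=[(0,0),(1,0),(0,1)]$: an arbitrary triangle $[a,b,c]$ is the image of $\Delta$ under the affine bijection $L(s,t)=a+s(b-a)+t(c-a)$, whose Jacobian matrix is constant with $|\det|=2\,\area[a,b,c]$, so by change of variables $\iint_{[a,b,c]}\chi\,dA=2\,\area[a,b,c]\iint_{\Delta}(\chi\circ L)\,ds\,dt$; moreover $\chi\circ L$ is again affine and $L$ sends $(\tfrac13,\tfrac13)$, the barycenter of $\Delta$, to $\tfrac13(a+b+c)$, the barycenter of $[a,b,c]$. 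On $\Delta$ one computes directly, using $\iint_\Delta ds\,dt=\tfrac12$ and $\iint_\Delta s\,ds\,dt=\iint_\Delta t\,ds\,dt=\tfrac16$: for $\chi(s,t)=p+qs+rt$ one gets $\iint_\Delta\chi\,ds\,dt=\tfrac12 p+\tfrac16 q+\tfrac16 r=\tfrac12\bigl(p+\tfrac13 q+\tfrac13 r\bigr)=\area(\Delta)\,\chi\bigl(\tfrac13,\tfrac13\bigr)$. Combining with the change-of-variables identity gives the claim for $[a,b,c]$.

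There is essentially no obstacle here; the computation is routine. The only point I would flag with a sentence is that the affine interpolant $\chi$ is legitimate even when the triangle meets $\partial T$: $\vol$ is continuous on all of $T$, so the interpolation at the three vertices is well defined, and $\vol$ is concave on $T$ by \cref{pro 8.1}, so $\chi\le\vol$ on the triangle. Neither of these properties is used in evaluating $\iint\chi$, which depends only on the affine function $\chi$ and the geometry of the triangle, so the argument above applies verbatim to every small triangle of the $d$-triangular partition, in $T_1$ and in $T_2$ alike.
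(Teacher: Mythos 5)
Your proposal is correct and follows essentially the same route as the paper: both rest on the standard identity that the integral of an affine function over a triangle equals the area times the average of its vertex values (equivalently, the value at the barycenter), combined with $\chi(a)=\vol(a)$, $\chi(b)=\vol(b)$, $\chi(c)=\vol(c)$. The only difference is that the paper asserts this identity without proof, whereas you supply the (correct) verification via the change of variables to the reference triangle $[(0,0),(1,0),(0,1)]$ and the computation $\iint_\Delta (p+qs+rt)\,ds\,dt=\tfrac12\bigl(p+\tfrac13 q+\tfrac13 r\bigr)$.
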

      \begin{proof}
      For an affine function $\chi$, and the triangle  $[a,b,c]$ we have;
$$\iint_{[a,b,c]}\chi(\theta,\alpha) \ dA=\frac{1}{3}\area[a,b,c] \left(\chi(a)+\chi(b)+\chi(c)\right), $$
and after all the computations we have \cref{@}, similarly \cref{eq2} can be proved. 
      \end{proof}
Finally, we are able to compute the lower bound.
 \begin{lemma}\label{lem 9.4}
 We have the following lower bound: 
\normalfont
 \begin{align*}
          \frac{4\pi^2}{(d+1)^2}\sum_{0 < k<k'<d}\vol\left(\frac{2k\pi}{d+1},\frac{2(k'-k)\pi}{d+1}\right)\leq \iint  \limits_{T}\vol(\theta,\alpha)dA.
          \end{align*} 
  \end{lemma}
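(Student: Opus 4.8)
The plan is to feed the triangular partition of $T$ from \cref{ob 9.2} into the affine interpolation estimate of the preceding lemma. Fix $d$ and let $[a,b,c]$ be one of the small triangles of the $d$-triangular partition. Since $\vol$ is concave on $T$ (\cref{pro 8.1}), the affine function $\chi$ that agrees with $\vol$ at the three vertices $a,b,c$ lies below $\vol$ throughout $[a,b,c]$, so $\iint_{[a,b,c]}\vol\,dA\ge\iint_{[a,b,c]}\chi\,dA$, and by the preceding lemma the right-hand side equals $\area[a,b,c]\cdot\tfrac13\bigl(\vol(a)+\vol(b)+\vol(c)\bigr)$. Each triangle of the partition is right-isosceles with legs $\tfrac{2\pi}{d+1}$, hence of area $\tfrac{2\pi^2}{(d+1)^2}$, and the triangles of $T_1\cup T_2$ tile $T$ up to a null set; summing over them yields
$$\iint_T\vol\,dA\;\ge\;\frac{2\pi^2}{3(d+1)^2}\sum_{[a,b,c]}\bigl(\vol(a)+\vol(b)+\vol(c)\bigr).$$

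Next I would regroup the right-hand sum by vertices rather than by triangles, writing it as $\tfrac{2\pi^2}{3(d+1)^2}\sum_v n(v)\vol(v)$ with $n(v)$ the number of small triangles incident to $v$. Every vertex on $\partial T$ has $\vol(v)=0$ by \cref{lem 7.1}, so it drops out, and for every vertex $v$ strictly interior to $T$ the definitions of $T_1$ and $T_2$ show that $v$ is a vertex of exactly six small triangles (three of each type). The one step that needs genuine care — and the one I would spell out — is that this count remains $6$ for the interior vertices lying just inside the hypotenuse $\theta+\alpha=2\pi$: there the six lattice points surrounding $v$ are still all partition vertices (some of them on $\partial T$), so none of the six incident triangles disappears. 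With this in hand,
$$\iint_T\vol\,dA\;\ge\;\frac{2\pi^2}{3(d+1)^2}\cdot 6\sum_{v\in In(T)}\vol(v)\;=\;\frac{4\pi^2}{(d+1)^2}\sum_{v\in In(T)}\vol(v).$$

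Finally, \cref{re 5.2} identifies $In(T)$ with the set of centres of the squares of the square subpartition of \cref{ob 9.1}, namely the points $\bigl(\tfrac{2k\pi}{d+1},\tfrac{2(k'-k)\pi}{d+1}\bigr)$ with $0<k<k'<d+1$; in particular $In(T)$ contains every such point with $0<k<k'<d$. Since $\vol\ge 0$ on $T$ (\cref{lem 7.1}), discarding the remaining nonnegative terms gives
$$\iint_T\vol\,dA\;\ge\;\frac{4\pi^2}{(d+1)^2}\sum_{0<k<k'<d}\vol\!\left(\frac{2k\pi}{d+1},\frac{2(k'-k)\pi}{d+1}\right),$$
which is exactly the claimed lower bound. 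The only non-routine ingredient is the vertex-degree count $n(v)=6$ for interior vertices; the rest is the bookkeeping of summing the triangle-wise inequalities and using nonnegativity of $\vol$ along the boundary.
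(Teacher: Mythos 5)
Your argument is correct and follows essentially the same route as the paper: the triangular partition of \cref{ob 9.2}, the affine interpolant lying below the concave function $\vol$, the one-third rule for integrating an affine function over a triangle, and the regrouping by vertices with the degree count of $6$ for inner vertices together with the vanishing of $\vol$ on $\partial T$. Your extra care about the vertex count near the hypotenuse and about the indexing discrepancy between $In(T)$ and the range $0<k<k'<d$ (resolved by nonnegativity of $\vol$) only makes explicit points the paper passes over silently.
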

  \begin{proof}
 We know that:
 $$ \sum_{[b,c,d ] \in T_2}\  \iint \limits_{[b,c,d ]}\vol(\theta,\alpha)dA
  +\sum_{[a,b,c]\in T_1}\ \iint  \limits_{[a,b,c]}\vol(\theta,\alpha) dA = \iint \limits_{T} \vol(\theta , \alpha) dA.$$
By using \ref{@} and  \ref{eq2} in the last equality we have:
       \begin{align*}
            &\sum_{[b,c,d ] \in T_2} \area[b,c,d ]\left(\frac{1}{3}\vol(d)+\frac{1}{3}\vol(b)+\frac{1}{3}\vol(c)\right)+\\&\sum_{[a,b,c]\in T_1}\area[a,b,c]\left(\frac{1}{3}\vol(a)+ \frac{1}{3}\vol(b)+\frac{1}{3}\vol(c)\right)
       \leq \iint \limits_{T} \vol(\theta , \alpha) dA.
      \end{align*}
     In the above computations, the first summation is over the triangles belong in $T_2$ represented by $[b,c,d]$ and the second summation is over the pink triangles belong in $T_1$ represented by $[a,b,c]$ and they have all the same areas, $\frac{2\pi}{(d+1)^2}$, so we can factor it. Notice that for each vertex $a$ the number of times that $\vol(a)$ appears in the summation depends on its location. As we already mentioned, $\vol$ is zero on the boundary of $T$, so let $a$ be an inner vertex of a small triangle. Thus, it appears in exactly $6$ triangles, marked in blue (see \cref{fig 20}). 
     
       \begin{figure}[t]
        \centering
    \includegraphics[width=.25\textwidth]{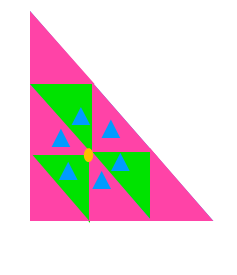}
    \caption{\begin{small}The yellow inner vertex is shared between the six triangles marked in blue.\end{small} } \label{fig 20}
  \end{figure}
     
     Hence, we have:
       
          \begin{align*}
            &\sum_{ [b,c,d ]\  \in  T_2} \area[b,c,d ]\left(\frac{1}{3}\vol(d)+\frac{1}{3}\vol(b)+\frac{1}{3}\vol(c)\right)+\sum_{[a,b,c]\ \in T_1}\area[a,b,c]\left(\frac{1}{3}\vol(a)+ \frac{1}{3}\vol(b)+\frac{1}{3}\vol(c)\right)\\
            &= \frac{4\pi^2}{(d+1)^2}\sum_{a\in In(T) }\frac{6}{3}\vol(a)
           =\frac{4\pi^2}{(d+1)^2}\sum_{0 < k<k'<d}\vol\left(\frac{2k\pi}{d+1},\frac{2(k'-k)\pi}{d+1}\right).
      \end{align*}
    In the last equality we used \cref{re 5.2}, that any inner vertex corresponds to a central point. Finally, we have the lower bound:
     \begin{align}\label{eq4}
         \frac{4\pi^2}{(d+1)^2}\sum_{0 < k<k'<d}\vol\left(\frac{2k\pi}{d+1},\frac{2(k'-k)\pi}{d+1}\right)\leq \iint  \limits_{T}\vol(\theta,\alpha)dA.
     \end{align}
      \end{proof}
      
       \subsection{Finding the limit of \texorpdfstring{$m(P_d)$}{}}
       In this last section, we find the limit of $(m(P_d))_{d \geq 1}$, which was announced in the introduction. First of all, in the following lemma we study the errors $E(d)$, which is another essential tool to find the limit. Using the triangular partition, and square subpartition we prove that when $d$ goes to infinity, $E(d)$ goes to zero faster than $1/d$. 
       \begin{lemma}\label{lem 9.3}$E(d)=o(\frac{1}{d})$.
 \end{lemma}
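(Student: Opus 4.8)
The goal is to show that the error $E(d)$ between $\iint_T \vol\, dA$ and the Riemann-type sum $\frac{4\pi^2}{d^2}\sum_{0<k<k'\le d-1}\vol(\frac{2k\pi}{d},\frac{2(k'-k)\pi}{d})$ — that is, the same expression with $d+1$ replaced by $d$ in \cref{ob 9.1} — decays faster than $1/d$. The plan is to squeeze $E(d)$ between the two one-sided estimates already proved: by \cref{con 9.2} we have $E(d)\le \epsilon(d)=\iint_{\blu}\vol\,dA$, and combining with the lower bound of \cref{lem 9.4} (applied with the index $d$ in place of $d+1$), the Riemann sum lies below the integral, so $E(d)$ is genuinely equal to $\iint_T\vol\,dA$ minus the sum, with no absolute value subtlety, and is bounded above by $\epsilon(d)$. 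Hence it suffices to prove $\epsilon(d)=\iint_{\blu}\vol(\theta,\alpha)\,dA = o(1/d)$.

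First I would describe the blue region $\blu$ precisely: it is the part of $T$ not covered by the union of the $\frac{2\pi}{d}\times\frac{2\pi}{d}$ squares centered at the inner lattice points. Geometrically $\blu$ is a thin neighbourhood of the boundary $\partial T$ (the two legs $\theta=0$, $\alpha=0$ and the hypotenuse $\theta+\alpha=2\pi$) together with the small half-square and quarter-square gaps adjacent to that boundary; its total area is $O(1/d)$, since it is contained in a strip of width $O(1/d)$ around a curve of fixed length. So the naive bound gives only $\epsilon(d)=O(1/d)$, which is not enough — I need the extra gain. The key observation is that $\vol$ vanishes on $\partial T$ (\cref{lem 7.1}), and moreover $\vol$ is Lipschitz on any compact subset avoiding the three bad corners; near the corners $(0,0),(0,2\pi),(2\pi,0)$ one must be more careful because $D$ has a vertical tangent there (the derivative $-\log|1-e^{i\theta}|$ blows up logarithmically). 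Away from the corners, on the portion of $\blu$ within distance $O(1/d)$ of an edge, $\vol$ is $O(1/d)$ because it vanishes on the edge and is Lipschitz, and that portion has area $O(1/d)$, giving an $O(1/d^2)$ contribution. For the corner pieces, $\vol$ is still bounded (indeed $\to 0$) and the corner regions of $\blu$ have area $O(1/d^2)$, again contributing $O(1/d^2)$ — or at worst, using the logarithmic modulus of continuity of $D$, a contribution of size $O(\log d / d^2)$. Either way this is $o(1/d)$.

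The main obstacle is the bookkeeping at the three corners, where $\vol$ fails to be Lipschitz: one cannot simply say "$\vol = O(\mathrm{dist}$ to boundary$)$" there. I would handle this by splitting $\blu = \blu_{\mathrm{corner}} \cup \blu_{\mathrm{edge}}$, where $\blu_{\mathrm{corner}}$ is the intersection of $\blu$ with three fixed small disks around the corners and $\blu_{\mathrm{edge}}$ is the rest. On $\blu_{\mathrm{edge}}$ use boundedness of $\nabla\vol$ and vanishing on $\partial T$ to get the pointwise bound $\vol = O(1/d)$, times area $O(1/d)$. On $\blu_{\mathrm{corner}}$ use only the crude bounds $0\le\vol\le C$ (from \cref{lem 7.1} and continuity) together with $\mathrm{area}(\blu_{\mathrm{corner}}) = O(1/d^2)$, since in each fixed disk the uncovered region is a bounded number of square-fragments each of area $O(1/d^2)$. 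Summing, $\epsilon(d)=O(1/d^2)=o(1/d)$, and therefore $E(d)\le\epsilon(d)=o(1/d)$, which is exactly the claim. The same argument applies verbatim with $d$ replaced by $d+1$ and by $d+2$, so it covers both Riemann sums appearing in \cref{lem 9.1}.
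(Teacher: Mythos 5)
Your reduction is the same as the paper's: sandwich the Riemann sum between the lower bound of \cref{lem 9.4} and the upper bound of \cref{con 9.2}, so that $0\le E(d+1)\le\epsilon(d+1)=\iint_{\blu}\vol\,dA$, and then estimate the integral over the blue strip. The divergence, and the trouble, lies in that last estimate: two of your claims are false as stated. First, $\vol$ is \emph{not} Lipschitz on compact sets avoiding only the three corners: its partial derivatives are differences of the terms $\log|1-e^{i\theta}|$, $\log|1-e^{i\alpha}|$, $\log|1-e^{i(\theta+\alpha)}|$, and these diverge along the \emph{entire} edges $\theta=0$, $\alpha=0$, $\theta+\alpha=2\pi$, i.e.\ along all of $\partial T$, not just at the vertices. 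Consequently at distance $\delta$ from an edge one only has $\vol=O(\delta\log(1/\delta))$, so the pointwise bound on $\blu_{\mathrm{edge}}$ should be $O(\log d/d)$, not $O(1/d)$. Second, if the disks around the corners have radius $r$ fixed independently of $d$, the uncovered region inside each of them is a strip of length $\asymp r$ and width $\asymp \pi/d$, hence consists of $\Theta(rd)$ square-fragments and has area $\Theta(r/d)$, not $O(1/d^2)$; with only the crude bound $0\le\vol\le C$ this corner term contributes $O(r/d)$, and your argument as written proves only $E(d)=O(1/d)$, which is not the statement. Both defects are repairable --- take the corner disks of radius $C/d$ (area $O(1/d^2)$) and accept the $\log d$ loss on the edge strip, obtaining $\epsilon(d)=O(\log d/d^2)=o(1/d)$, or let $r\to 0$ after $d\to\infty$ --- but as written the proof does not close.

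It is worth noting that the paper avoids all of this: it bounds $\epsilon(d+1)\le\bigl(\max_{\blu}\vol\bigr)\cdot\area(\blu)$, computes $\area(\blu)=2\pi^2\frac{3d+1}{(d+1)^2}=O(1/d)$, and observes that $\max_{\blu}\vol\to 0$ because every point of the blue part is within $O(1/d)$ of $\partial T$, where the continuous function $\vol$ vanishes by \cref{lem 7.1}. The product of an $o(1)$ factor and an $O(1/d)$ factor is $o(1/d)$, with no need to discuss Lipschitz constants, corners, or the logarithmic singularity of $\nabla\vol$. Your route, once corrected, does buy a quantitative rate $O(\log d/d^2)$, stronger than what the lemma asserts, but that strength is not needed for \cref{th 9.1}.
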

 \begin{proof}
  We use the upper and lower bounds \ref{eq3} and \ref{eq4}, found respectively in \cref{con 9.2} and \cref{lem 9.4} and we have:
       \begin{align*}
           \frac{4\pi^2}{(d+1)^2} \sum_{0<k<k'\leq d} \vol\left(\frac{2k\pi}{d+1},\frac{2(k'-k)\pi}{d+1}\right)&\leq
       \iint  \limits_{T}\vol(\theta,\alpha)dA\\
       &\leq  \frac{4\pi^2}{(d+1)^2} \sum_{0<k<k'\leq d} \vol\left(\frac{2k\pi}{d+1},\frac{2(k'-k)\pi}{d+1}\right)+\epsilon(d+1).
       \end{align*} 
      Therefore, we conclude;
       $$0\leq \iint  \limits_{T}\vol(\theta,\alpha)dA-
        \frac{4\pi^2}{(d+1)^2} \sum_{0<k<k'\leq d} \vol\left(\frac{2k\pi}{d+1},\frac{2(k'-k)\pi}{d+1}\right)\leq \epsilon (d+1) \leq \maxx. \area({\color{blue}\blu}),$$
       where $\maxx$ is the maximum of $\vol$ on the ${\color{blue}\blu}$ of the triangle. While $d$ is going to infinity the points inside the blue part are approaching the boundary of $T$, where the values of $\vol$ are zero. Hence, the Maximum of $\vol$ in the blue part goes to zero as well. The area of the blue part is $2\pi^2 \frac{3d+1}{(d+1)^2}$ hence, by the definition of $E(d+1)$ we have:
        $$E(d+1)=\iint  \limits_{T}\vol(\theta,\alpha)dA-
        \frac{4\pi^2}{(d+1)^2} \sum_{0<k<k'\leq d} \vol\left(\frac{2k\pi}{d+1},\frac{2(k'-k)\pi}{d+1}\right)\leq  2\pi^2 \frac{3d+1}{(d+1)^2}\maxx.$$
As we explained $\maxx \xrightarrow{d \rightarrow \infty}  0$, so we have $dE(d+1)\xrightarrow{d \rightarrow \infty}  0$. In other words  $E(d+1)=o(\frac{1}{d})$. 
       \end{proof}
       \begin{theorem} \label{th 9.1}
The $\lim_{d \rightarrow \infty} m(P_d)$ exists and it is:
$$\lim_{d\rightarrow \infty}m(P_d) =\frac{9}{2\pi^2} \zeta (3) \simeq 0.548.$$
\end{theorem}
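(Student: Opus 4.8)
The plan is to substitute the two‑sided Riemann‑sum estimates established in \cref{con 9.2} and \cref{lem 9.4} into the closed formula of \cref{lem 9.1}, and then pass to the limit term by term. Write $I:=\iint_T\vol(\theta,\alpha)\,dA$, which equals $6\pi\zeta(3)$ by \cref{lem 9.2}. Exactly as in the proof of \cref{lem 9.3}, combining the upper bound \eqref{eq3} with the lower bound \eqref{eq4} gives
$$0\le I-\frac{4\pi^2}{(d+1)^2}\sum_{0<k<k'\le d}\vol\!\left(\frac{2k\pi}{d+1},\frac{2(k'-k)\pi}{d+1}\right)\le\epsilon(d+1),$$
so, writing $E(d+1)$ for the middle quantity as in \cref{ob 9.1},
$$\sum_{0<k<k'\le d}\vol\!\left(\frac{2k\pi}{d+1},\frac{2(k'-k)\pi}{d+1}\right)=\frac{(d+1)^2}{4\pi^2}\bigl(I-E(d+1)\bigr),$$
and the same argument applied to the $(d+2)$‑square subpartition yields
$$\sum_{0<k<k'\le d+1}\vol\!\left(\frac{2k\pi}{d+2},\frac{2(k'-k)\pi}{d+2}\right)=\frac{(d+2)^2}{4\pi^2}\bigl(I-E(d+2)\bigr).$$

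Feeding these two identities into the formula of \cref{lem 9.1} and collecting terms, I would obtain
$$2\pi m(P_d)=\frac{I}{2\pi^2}\left(\frac{(d+2)^2}{d+1}-\frac{(d+1)^2}{d+2}\right)+\frac{1}{2\pi^2}\left(\frac{(d+1)^2}{d+2}E(d+1)-\frac{(d+2)^2}{d+1}E(d+2)\right).$$
A direct computation gives $\dfrac{(d+2)^2}{d+1}-\dfrac{(d+1)^2}{d+2}=\dfrac{3d^2+9d+7}{d^2+3d+2}\longrightarrow 3$ as $d\to\infty$, so the first summand converges to $\dfrac{3I}{2\pi^2}=\dfrac{18\pi\zeta(3)}{2\pi^2}=\dfrac{9\zeta(3)}{\pi}$. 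For the second summand, both $\dfrac{(d+1)^2}{d+2}$ and $\dfrac{(d+2)^2}{d+1}$ are $O(d)$, whereas \cref{lem 9.3} gives $E(d+1)=o(1/d)$ and $E(d+2)=o(1/d)$; hence the second summand is $o(1)$ and vanishes in the limit. Therefore $2\pi m(P_d)\to\dfrac{9\zeta(3)}{\pi}$, that is $\lim_{d\to\infty}m(P_d)=\dfrac{9}{2\pi^2}\zeta(3)\simeq 0.548$.

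The only delicate point is that this cancellation is not robust: each of the two sums in \cref{lem 9.1} grows like $\frac{d^2}{4\pi^2}\cdot 6\pi\zeta(3)$, so after multiplication by the $O(1/d)$ prefactors the two contributions to $2\pi m(P_d)$ each grow linearly in $d$, and only their difference is finite. Consequently the crude Riemann‑sum estimate $E(d)=O(1/d)$ would leave an unresolved $O(1)$ discrepancy, and the improvement to the rate $E(d)=o(1/d)$ supplied by \cref{lem 9.3} — which itself rests on the concavity of $\vol$ (\cref{pro 8.1}) and on $\vol$ vanishing on $\partial T$ (\cref{lem 7.1}) — is precisely what makes the proof go through. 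With that input granted, the argument above is pure bookkeeping and I expect no further obstacle.
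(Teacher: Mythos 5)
Your proposal is correct and follows essentially the same route as the paper: substitute the square-subpartition identity $\sum\vol=\frac{(d+1)^2}{4\pi^2}(I-E(d+1))$ (and its $(d+2)$ analogue) into the formula of \cref{lem 9.1}, compute $\frac{(d+2)^2}{d+1}-\frac{(d+1)^2}{d+2}\to 3$, and kill the error terms using $E(d)=o(1/d)$ from \cref{lem 9.3}. Your closing remark about why $O(1/d)$ would not suffice is exactly the point the paper's argument hinges on, and your arithmetic ($3d^2+9d+7$ in the numerator) is in fact cleaner than the paper's own display, which contains a typo at that step.
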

\begin{proof}
By using \cref{lem 9.1} we have:
\begin{align*}
    2\pi m(P_d)= \frac{-2}{d+2} \sum_{0<k<k'\leq d} \vol\left(\frac{2k\pi}{d+1},\frac{2(k'-k)\pi}{d+1}\right)+\frac{2}{d+1} \sum_{0<k<k'\leq d+1} \vol\left(\frac{2k\pi}{d+2},\frac{2(k'-k)\pi}{d+2}\right).
\end{align*}
In order to find $\lim_{d\rightarrow \infty}m(P_d)$ we compute the limit of the R.H.S. Consider the $(d+1)$-square subpartition, so we have:

\begin{align*}
     \iint \limits_{T} \vol(\theta , \alpha) dA =   \frac{4\pi^2}{(d+1)^2} \sum_{0<k<k'\leq d} \vol\left(\frac{2k\pi}{d+1},\frac{2(k'-k)\pi}{d+1}\right)  +E(d+1). 
\end{align*}
Hence, we have:
\begin{align*}
    \frac{-2}{d+2} \sum_{0<k<k'\leq d} \vol\left(\frac{2k\pi}{d+1},\frac{2(k'-k)\pi}{d+1}\right)
    = \frac{-(d+1)^2}{2\pi^2 (d+2)}\iint \limits_{T} \vol(\theta , \alpha) dA +\frac{(d+1)^2}{2\pi^2 (d+2)} E(d+1).
\end{align*} 
We repeat the same process for the case $d+2$ and we have:
\begin{align*}
    \frac{2}{d+1} \sum_{0<k<k'\leq d+1} \vol\left(\frac{2k\pi}{d+2},\frac{2(k'-k)\pi}{d+2}\right)
    = \frac{(d+2)^2}{2\pi^2 (d+1)}\iint \limits_{T} \vol(\theta , \alpha) dA -\frac{(d+2)^2}{2\pi^2 (d+1)} E(d+2).
\end{align*}
We recompute $m(P_d)$ by using the previous information;
\begin{align*}
    2\pi m(P_d)=&\frac{2}{d+1} \sum_{0<k<k'\leq d+1} \vol\left(\frac{2k\pi}{d+2},\frac{2(k'-k)\pi}k{d+2}\right)-\frac{2}{d+2} \sum_{0<k<k'\leq d} \vol\left(\frac{2k\pi}{d+1},\frac{2(k'-k)\pi}{d+1}\right)\\
    = &\frac{(d+2)^2}{2\pi^2 (d+1)}\iint \limits_{T} \vol(\theta , \alpha) dA
    -\frac{(d+1)^2}{2\pi^2 (d+2)}\iint \limits_{T} \vol(\theta , \alpha) dA\\ &\quad +\frac{(d+1)^2}{2\pi^2 (d+2)} E(d+1)-\frac{(d+2)^2}{2\pi^2 (d+1)} E(d+2)\\
    =&\frac{3d^2+8d+7}{4\pi^3(d^2+3d+2)}\iint \limits_{T} \vol(\theta , \alpha) dA+\frac{(d+1)^2}{2\pi^2 (d+2)} E(d+1)-\frac{(d+2)^2}{2\pi^2 (d+1)} E(d+2).
    \end{align*}
 
We find the limit by using the last equality. According to \cref{lem 9.3}, $E(d)=o(\frac{1}{d})$. Hence, 
$\lim_{d\rightarrow \infty}\frac{(d+1)^2}{2\pi^2 (d+2)} E(d+1)=\lim_{d\rightarrow \infty}\frac{(d+2)^2}{2\pi^2 (d+1)} E(d+2)=0$.
Therefore, based on \cref{lem 9.2} we have:
     $$\lim_{d\rightarrow \infty}m(P_d) = \frac{3}{4\pi^3}\iint \limits_{T} \vol(\theta , \alpha) dA= \frac{9}{2\pi^2}\sum_{n=1}^{\infty} \frac{1}{n^3}=\frac{9}{2\pi^2} \zeta (3).$$
     \end{proof}

\bibliographystyle{alpha}
\bibliography{thebib}

\Addresses

\end{document}